\documentclass[twoside,12pt]{article}
\usepackage[letterpaper]{geometry}
\usepackage{verbatim}
\usepackage{amsmath}
\usepackage{bbm}
\usepackage{ltexpprt}
\usepackage{hyperref}
\usepackage{setspace}

\usepackage[T1]{fontenc}
\usepackage{newtxtext,newtxmath}
\usepackage{tikz}
\usepackage{authblk}
\usepackage{graphicx}
\usepackage[font=normalsize]{caption}
\usepackage[small]{subfigure}

%-----------Paper specific definitions------------
\usepackage[dvipsnames]{xcolor}
\usepackage{tkz-euclide,tikzscale}
\usetikzlibrary{arrows,circuits.ee.IEC}
\tikzset{ac source/.style={
  circuit symbol lines,
  circuit symbol size = width 2 height 2,
  shape = generic circle IEC,
  /pgf/generic circle IEC/before background={
    \pgfpathmoveto{\pgfpoint{-0.8pt}{0pt}}
    \pgfpathsine{\pgfpoint{0.4pt}{0.4pt}}
    \pgfpathcosine{\pgfpoint{0.4pt}{-0.4pt}}
    \pgfpathsine{\pgfpoint{0.4pt}{-0.4pt}}
    \pgfpathcosine{\pgfpoint{0.4pt}{0.4pt}}
    \pgfusepath{stroke}
  },
  transform shape
}}
\newcommand{\solidorangeline}{\raisebox{2pt}{\tikz{\draw[-,Orange,line width = 2pt](0,0) -- (4mm,0);}}}
\newcommand{\solidgreenline}{\raisebox{2pt}{\tikz{\draw[-,Green,line width = 2pt](0,0) -- (4mm,0);}}}
\newcommand{\solidredline}{\raisebox{2pt}{\tikz{\draw[-,Maroon,line width = 2pt](0,0) -- (4mm,0);}}}
\newcommand{\solidblueline}{\raisebox{2pt}{\tikz{\draw[-,blue,line width = 2pt](0,0) -- (4mm,0);}}}
\newcommand{\solidpurpleline}{\raisebox{2pt}{\tikz{\draw[-,Periwinkle,line width = 2pt](0,0) -- (4mm,0);}}}
\newcommand{\dottedredline}{\raisebox{2pt}{\tikz{\draw[red,line width = 2pt,dash pattern=on 2pt off 2pt](0,0) -- (4mm,0);}}}
\newcommand{\dottedblackline}{\raisebox{2pt}{\tikz{\draw[black,line width = 2pt,dash pattern=on 2pt off 2pt](0,0) -- (4mm,0);}}}
\newcommand{\bm}[1]{{\mathbbm{#1}}}

\newcommand{\Bcal}{{\cal B}}

\newcommand{\Rset}{\mathbb{R}}

\newcommand{\Iset}{\mathbb{I}}
\newcommand{\Jset}{\mathbb{J}}
\newcommand{\Uset}{\mathbb{U}}
\newcommand{\vnorm}[1]{\left\|#1\right\|}
\newcommand{\argmin}{\mathop{\rm argmin}}
\newcommand{\op}{\text{op}}

\begin{document}

\newcommand\relatedversion{}

\title{\Large Spatially-Coupled Network RNA Velocities:\\
A Control-Theoretic Perspective\relatedversion}

\author[1,2]{Boya Hou}
\author[1,2]{Maxim Raginsky}
\author[3]{Abhishek Pandey} 
\author[1,2]{Olgica Milenkovic}

\affil[1]{\small Department of Electrical and Computer Engineering, University of Illinois Urbana-Champaign}
\affil[2]{\small Carl R. Woese Institute for Genomic Biology, University of Illinois Urbana-Champaign}
\affil[3]{\small AbbVie Pharmaceuticals}

\date{}

\maketitle

\begin{abstract} \small\baselineskip=9pt \noindent 
RNA velocity is an important model that combines cellular spliced and unspliced RNA counts to infer dynamical properties of various regulatory functions. Despite its wide applicability and many variants used in practice, the model has not been adequately designed to directly account for both intracellular gene regulatory network interactions and spatial intercellular communications. Here, we propose a new RNA velocity approach that jointly and directly captures two new network structures: an intracellular gene regulatory network (GRN) and an intercellular interaction network that captures interactions between (neighboring) cells, with relevance to spatial transcriptomics. We theoretically analyze this two-level network system through the lens of control and consensus theory. In particular, we investigate network equilibria, stability, cellular network consensus, and optimal control approaches for targeted drug intervention.
\end{abstract}

% %============================
% \noindent
% \textbf{Significance Statement:}
% RNA velocity provides a useful framework for inferring cellular dynamics from single-cell transcriptomic data. Yet existing models have not been adequately designed to directly account for both intracellular gene regulatory network interactions and spatial intercellular communications. In this paper, we propose a new RNA velocity framework that jointly and directly captures two new network structures: an intracellular gene regulatory network (GRN) and an intercellular interaction network that encodes interactions between (neighboring) cells, with relevance to spatial transcriptomics. We develop a theoretical framework for analyzing intracellular and intercellular regulatory dynamics from a control-theoretic perspective, enabling the study of equilibria, stability, and cellular network consensus. We further investigate optimal control approaches for targeted drug intervention.

%============================

\section{Introduction and Problem Formulation}
RNA velocity is a modeling concept used to infer cellular differentiation trajectories from bulk and single-cell RNA sequencing data~\cite{la2018rna}. The key idea behind the model is to couple the counts of unspliced and spliced mRNA molecules into a dynamical system, and define velocity as an indicator of the future state of spliced mRNA given its unspliced molecular counts. Specifically, given a single cell and a single gene, the evolution of unspliced RNA $u(t)$ and spliced RNA $s(t)$ is captured by two ordinary differential equations (ODEs) of the form 
\begin{align}
\begin{aligned}
\frac{\mathrm{d} u(t)}{\mathrm{d} t} = \alpha(t) - \beta u(t), \quad
\frac{\mathrm{d} s(t)}{\mathrm{d} t} = \beta u(t) 
- \gamma s(t),
\end{aligned}
\label{eq.0}
\end{align}
where $\alpha(t)$ stands for the time-dependent transcription rate (i.e., the ``expression'' rate at which DNA is read to produce mRNA), $\beta$ represents the splicing rate (i.e.,  the rate at which mRNA is modified via alternative splicing), and $\gamma$ equals the degradation rate (i.e., the rate at which mature RNA is used up for translation into proteins). RNA velocity itself is defined as $v(t)= \frac{d s(t)}{\mathrm{d} t}$~\cite{la2018rna} so that a positive velocity implies that the expression of the underlying gene is increasing, while a negative RNA velocity indicates an opposite trend. In addition, $v(t)=0$ implies that the replication/splicing system is in an equilibrium\footnote{Although many cellular mechanisms inherently exist and operate in nonequilibrium states, there are equally many examples of systems that operate in equilibria, including bacteriophage lambda lysogenic maintenance circuits, drosophila segment polarity network, etc.~\cite{fang2020nonequilibrium}.}. 

Two of the most frequently used RNA velocity models are Velocyto~\cite{la2018rna} and its extended version, termed ScVelo~\cite{bergen2020generalizing}. Velocyto relies on the assumption that genes in a cell have reached a steady-state expression level. At the equilibrium, the ratio of the unspliced RNA to spliced RNA of a gene is determined by the ratio of the degradation and splicing rates. Velocyto quantifies RNA velocity as the deviation from the steady-state ratio. ScVelo~\cite{bergen2020generalizing}, on the other hand, extends the process of estimating RNA velocity to transient systems by using a dynamical model. Although it relaxes steady-state assumptions, the model only describes the transcription dynamics of genes in a single cell. The recently proposed GraphVelo model~\cite{chen2025graphvelo} refines the RNA velocity estimates by projecting them onto the tangent space of a low-dimensional manifold of the single-cell data, and extends RNA velocity estimates to multi-modal single-cell data. Perhaps the most related model to ours is TFVelo~\cite{li2024tfvelo}. It extends the gene expression model to incorporate the influence of transcription factors. More precisely, TFVelo uses a sine function to describe the regulatory behavior, but fails short of explicitly modeling the influence of regulatory genes on the transcription rates. In summary, models of the form described in Equations~\eqref{eq.0} only capture the transcriptional dynamics of a single gene within a single cell, and abstract various network controls through the rate parameters, which are usually inferred from data. This indirect inference/modeling approach may hence be compromised by limited and noisy data evidence.

This paper generalizes all the above lines of work by \emph{explicitly including information about gene regulatory networks} and extending the model to address \emph{cellular populations within spatial networks that work towards a functional consensus}. 

To enable modeling the influence of interventions, one needs to explicitly account for the regulatory relationships between genes that control the transcription process. Furthermore, to describe communications between cells, it is desirable to introduce consensus constraints that, in practice, can be explained via spatial transcriptomics data. Towards this end, we assume that each gene expression is controlled by a gene regulatory network (GRN) comprising $n_g$ regulatory genes\footnote{For simplicity, the proposed model is mostly tailored towards transcription factor networks.}. Since gene expression can be either positively (activation) or negatively (repression) regulated~\cite{khammash2022cybergenetics}, we use two nonnegative matrices $W^{+}$ and $W^{-}$ to represent the weighted directed regulatory networks in which $W^{+}$ captures positive and $W^{-}$ negative regulations. 
We also enforce $W^{+}_{gq} \cdot W^{-}_{gq}=0$ for each pair $g,q$. In simple terms, $W_{gq}^{+} \neq 0$ captures the fact that gene $q$ positively influences the expression of gene $g$, so that it is automatically implied that $W_{gq}^{-}=0$ since gene $q$ cannot repress gene $g$ in this case. A similar explanation holds for $W_{gq}^{-} \neq 0$. Note that these weights can be estimated using not only expression data but any multiomics source of ``interaction'' evidence. The sample complexity required for the identification process is discussed in~\cite{sontag2003differential}.

Under the above assumptions, the evolution of unspliced RNA $u^g(t)$ and spliced RNA $s^g(t$) can be described as
\begin{align}
\begin{aligned}
\frac{\mathrm{d} u^g}{\mathrm{d} t} = \alpha^g \ \frac{\kappa+ \sum_{q=1}^{n_g} W_{gq}^{+} s^q(t)}{\kappa+ \sum_{q=1}^{n_g} W_{gq}^{-} s^q(t)}- \beta^g u^g(t),\quad
\frac{\mathrm{d} s^g}{\mathrm{d} t} = \beta^g u^g(t) - \gamma^g s^g(t), 
\end{aligned}
\label{eq.GRN}
\end{align}
where $\kappa \geq 0$ is a constant.  Here, the rate parameters represent the \emph{basal expression, splicing, and degradation rates} of individual genes, but how much of that basal rate is utilized is controlled by the network of transcription factors indexed by $g$. This allows for direct accounting of the influence of individual transcription factors, as well as intervention efficiency. For $g = 1,\cdots, n_g$, we use a nonlinear (rational) gene control function of the form
\begin{align}
R_g(s):= \frac{\kappa+ \sum_{q=1}^{n_g} W_{gq}^{+} s^q}{\kappa+ \sum_{q=1}^{n_g} W_{gq}^{-} s^q}, \qquad s:=[s^1,\dots,s^q,\dots,s^{n_g}]^\top
\label{eq.R}
\end{align}
which aggregates all positive and negative regulatory effects within the numerator and denominator, respectively. Our modeling choice is governed by two considerations: the connection of the model to Hill functions~\cite{alon2019introduction}, which are rational functions, albeit with more general polynomial terms, and analytical tractability. For completeness, we provide a review of the Hill function model in Section~\ref{app:Hill} of the Supplementary Information (SI).

The model in Equation~\ref{eq.GRN} only considers a single cell, while communication and synchronization of activities across cells, as encountered in population dynamics models~\cite{brauer2012mathematical}, are overlooked. Cells typically communicate via diffusion of signaling molecules, such as hormones, lipids, and ions, etc., as well as proteins, which may be viewed as spliced RNA products. Importantly, communication-enabling extracellular vesicles contain RNA that can be transcribed in target cells, and other evidence suggests that RNAs can act as hormones~\cite{o2020rna,bayraktar2017cell,kehr2018long,wu2002signaling}.  Hence, to mitigate the issue of incorporating cellular consensus, one can instead revise the model by considering a network of $n_c$ cells, each controlled by $n_g$ genes. The  \emph{spatially-coupled} RNA network velocity model we propose to study takes the form:
\begin{align}
\begin{aligned}
\frac{\mathrm{d} u_i^g}{\mathrm{d} t} &= \alpha_i^g \ \frac{\kappa+ \sum_{q=1}^{n_g} W_{gq}^{+} s_i^q(t)}{\kappa+ \sum_{q=1}^{n_g} W_{gq}^{-} s_i^q(t)}- \beta_i^g u_i^g(t),\\
\frac{\mathrm{d} s_i^g}{\mathrm{d} t} &= \beta_i^g u_i^g(t) 
-\gamma_i^g s^g_i(t) 
+ {c} \sum_{j=1}^{n_c} A_{ij}\left( s^g_j(t) - s^g_i(t) \right).
\end{aligned}
\label{eq.consensus}
\end{align}
Here, the superscript $g$ indexes genes, while subscripts such as $i$ and $j$ index cells. As before, $\kappa$ is a constant, and so is $c$ as well. The term ${c} \sum_{j=1}^{n_c} A_{ij}( s^g_j(t) - s^g_i(t) )$ models the consensus network, in which $A_{ij}$ describe the intrercell (communication) adjacency matrix, with $A_{ij}\neq 0$ if cells $i$ and $j$ are exchanging signaling molecules. Note that in order to make the model tractable for analysis, we used spliced RNA concentrations as proxies for the corresponding protein concentrations, with the scaling factor $c$ succinctly capturing the molecular ``conversion'' loss.

The goal of our work is to analyze the GRN and the joint GRN-consensus RNA velocity models from the perspective of control theory. In particular, we examine the conditions under which the dynamical systems allow for an equilibrium, and when the equilibria are stable. Furthermore, we investigate intervention (perturbation) strategies for GRNs with the purpose of examining the potential effect of gene knockouts or drugs on the behavior of the coupled dynamical models. We view the problem of designing such intervention strategies under various constraints as \emph{minimum-time optimal control problems}. To the best of our knowledge, this control-theoretic formulation has not been proposed before.

The paper is organized as follows. In Section~\ref{sec:GRN}, we derive conditions for the existence of an equilibrium and its stability for the single-cell GRN-driven RNA velocity model. In Section~\ref{sec:GRNConsensus}, we extend this line of analysis by accounting for the consensus term in the model. Direct and indirect intervention models are analyzed through the lens of optimal control theory in Section~\ref{sec:intervention}, resulting in explicit results for GRN-driven and numerical findings for spatially-coupled consensus models. Technical background on nonnegative dynamical systems can be found in Section~\ref{app:nonnegative} of the SI. The proofs of all lemmas and theorems are also given in the SI.

%================================
\section{Equilibria and Stability of GRN-Driven RNA Velocity Models} 
\label{sec:GRN}

We start by showing that the ODE model of network RNA velocity in Equation~\eqref{eq.GRN} is consistent with the GRN structure. For brevity, denote the right-hand sides of Equation~\eqref{eq.GRN} by $f_g^u$ and $f_g^s$, so that $\frac{d u^g}{\mathrm{d} t} = f_g^u$ and $\frac{d s^g}{\mathrm{d} t} = f_g^s$.  For each gene $g$, we let 
$$N_g:= \kappa+ \sum_{q=1}^{n_g} W_{gq}^{+} s^q, \quad D_g:=\kappa+ \sum_{q=1}^{n_g} W_{gq}^{-} s^q.$$ 
When either $W_{gq}^{+}$ or $W_{gq}^{-}$ is nonzero, gene $q$ directly regulates gene $g$, and the regulatory effect is encoded in
\begin{align}
\frac{\partial f_g^u }{ \partial s^q}
= \frac{\alpha^j \left(W^{+}_{ji} D_j -  W^{-}_{ji} N_j\right)}{\left(D_j\right)^2}.
\end{align}
When $W_{gq}^{+}>0$, we have $\frac{\partial f_g^u }{ \partial s^q}>0$, implying gene $q$ is an activator of gene $g$; and when $W_{gq}^{-}>0$, we have $\frac{\partial f_g^u }{ \partial s^q}<0$, indicating gene $q$ is a repressor of gene $g$. Hence, our ODE~\eqref{eq.GRN} is consistent with the GRN.

While consistency with the GRN is sufficient for the analysis in this paper, indirect influences between genes that arise through multi-step pathways can, in principle, be analyzed using the \textit{constant sign property} (CSP) framework introduced in~\cite{kang2020graph}. For a given pair of genes $(q,g)$, the idea behind the CSP is to examine whether the influence of $q$ on $g$ through the ODE dynamics is well-defined and monotonic. In this case, one first identifies the shortest path(s) in the GRN that connect $u^q$ to $u^g$~\footnote{According to~\cite[Definition 4]{kang2020graph}, one first defines a molecular graph whose vertices are internal molecular classes, and then merges the molecular states which belong to the same gene to recover the GRN via~\cite[Proposition 1]{kang2020graph}. Since our ODE model only involves $u^g$ and $s^g$ and since, for each gene $g$, $u^g$ directly affects $s^g$, we adapt the analysis in~\cite{kang2020graph} to directly examine the level of influence of genes}. For each such path $\pi$, one computes the product of the first-order partial derivatives of the underlying molecular functions along that path. Notice that in our ODE model~\eqref{eq.GRN}, $u^q$ directly affects $s^q$ as captured by $\frac{d s^g}{\mathrm{d} t} = \beta^g u^g(t) - \gamma^g s^g(t)$. Hence, the product over the shortest path $\pi$ takes the form $\prod_{(i,j) \in \pi} \frac{\partial f_i^s }{ \partial u^i} \times \frac{\partial f_j^u }{ \partial s^i}$. When there are multiple shortest paths, one proceeds as follows. Denote the collection of shortest paths by $P(q,g)$. Then, take the sum of all products along the shortest paths to obtain the sum-product defined in ~\cite[Definition 7]{kang2020graph}. More specifically, define the sum-product quantity $Q$ as
\begin{align}
\begin{aligned}
    Q(q,g,u,s) 
    &= \sum_{\pi \in P(q,g)} \prod_{(i,j) \in \pi} \frac{\partial f_i^s }{ \partial u^i} \times \frac{\partial f_j^u }{ \partial s^i} \\
    &= \sum_{\pi \in P(q,g)} \prod_{(i,j) \in \pi} \left( \beta^i 
    \times \frac{\alpha^j \left(W^{+}_{ji} D_j -  W^{-}_{ji} N_j\right)}{\left(D_j\right)^2} \right).
\end{aligned}
\label{eq.CSP}
\end{align}
The sum-product monotonicity is defined as the sign of $Q$ as $B(q,g,u,s) = \text{sign} \left(Q(q,g,u,s)\right)$. If the shortest path is unique, or all shortest paths have the same sign\footnote{Also, note that due to the stochastic nature of transcription, one often only uses information about the ``sign'' of interaction: activating or repressing.}, $B(q,g,u,s)$ is constant over the entire state space, indicating the indirect influence on gene $q$ on gene $g$ is monotonic, and hence the system is \emph{globally CSP}, per \cite[Definition 7]{kang2020graph}. In this case, the indirect influence can be represented by a single directed edge, and as the rational function $R_g(s)$ in~\eqref{eq.R} is real and analytic, and therefore smooth, one can invoke~\cite[Proposition 1]{kang2020graph} to conclude that the underlying ODE model is consistent with a single signed (directed) graph. 

Next, given that the network dynamics~\eqref{eq.GRN} is nonlinear, and regulatory effects cannot be directly interpreted based on the sign of the weighted adjacency matrix (e.g., weights). The constant sign property~\eqref{eq.CSP} provides one way to explain the notion of positive and negative feedback, since when there is only one shortest path between gene $q$ and gene $g$, say $q \to g$, $B(q,g,u,s)$ reflects the regulatory effect encoded in $W^{\pm}_{gq}$ as
\begin{align}
\begin{aligned}
    B(q,g,u,s) 
    =& \text{sign} \left( \frac{\partial f_q^s }{ \partial u^q} \times \frac{\partial f_g^u }{ \partial s^q} \right)\\
    =& \text{sign} \left( \beta^q \times \frac{\alpha^g \left( W^{+}_{gq} D_g - W^{-}_{gq} N_g\right)}{\left(D_g\right)^2} \right)\\
    =&\begin{cases}
    +1, & \text{if } W^{+}_{gq}>0, \\
    -1, & \text{if } W^{-}_{gq}>0.
    \end{cases}
\end{aligned}
\end{align}

In what follows, we present a direct characterization of regulatory effects through the incremental gain of the nonlinear function $R_g(s)$ in Equation~\eqref{eq.gain}, which also reveals its dependence on $W^{+}$ and $W^{-}$ \cite{sepulchre2019feedback}. Consider two spliced RNA configurations $s,\hat{s}$ that agree in all coordinates except for $q$, and let
$N'_g=\kappa+ \sum_{q'=1}^{n_g} W_{gq'}^{+} \hat{s}^{q'}$, $ D'_g=\kappa+ \sum_{q'=1}^{n_g} W_{gq'}^{-} \hat{s}^{q'}$. Write $\delta s^q:= s^q - \hat{s}^q$. Then, the incremental gain of $R_g$ due to a change in $s_q$ equals
\begin{align}
\begin{aligned}
\frac{R_g(s) - R_g(\hat{s})}{s^q - \hat{s}^q}
= \frac{N_g D'_g - D_g N'_g}{D_g D'_g \ \delta s^q}  
= \frac{N_g \left(D_g - W_{gq}^{-} \delta s^q\right) - D_g \left(N_g - W_{gq}^{+} \delta s^q \right)}{D_g D'_g \ \delta s^q} 
= \frac{D_g W_{gq}^{+} - N_g W_{gq}^{-}}{D_g D'_g}.
\end{aligned}
\label{eq.gain}
\end{align}
Hence, the incremental gain is positive if $W^{+}_{gq} > 0$ (in which case $W^-_{gq} = 0$), and negative if $W^{-}_{gq} > 0$ (in which case $W^+_{gq} = 0$). 
%------------------------------------
As a result, the incremental gain directly describes how changes in $s^q$ affect $R_g(s)$, and subsequently, influence the dynamics of $u^g$. Positive (resp., negative) incremental gain indicates the presence of positive (resp., negative) feedback \cite{sepulchre2019feedback}.
 
With this in mind, we turn our attention to an analysis of equilibria and stability of the single-cell network RNA velocity model.

%================================
\subsection{Equilibria and Stability.}

Let $u:=[u^1,\cdots,u^{n_g}]^\top$, $s:=[s^1,\cdots,s^{n_g}]^\top$, $\alpha = \text{diag}\left(\alpha^1,\cdots,\alpha^{n_g} \right)$, $\beta := \text{diag}\left(\beta^1,\cdots,\beta^{n_g} \right)$, and $\gamma := \text{diag}\left(\gamma^1,\cdots,\gamma^{n_g} \right)$, . Furthermore, let $R(s):=[R_1(s),\cdots, R_{n_g}(s)]^\top$.
Equation~\eqref{eq.GRN} can then be compactly rewritten as
\begin{align}
\begin{aligned}
\frac{\mathrm{d} u}{\mathrm{d} t} = \alpha R(s) - \beta u, \quad
\frac{\mathrm{d} s}{\mathrm{d} t} = \beta u - \gamma s.
\end{aligned}
\label{eq.GRN.vec}
\end{align}
Recall that for a nonlinear system of the form $\frac{\mathrm{d}x}{\mathrm{d} t} = f(x)$, a point $x_e$ is an equilibrium of the system if $f(x_e) = 0$. Our main analytical results for the networked dynamics~\eqref{eq.GRN.vec} are listed below.

The first theorem provides a sufficient condition for the existence of an equilibrium point with all coordinates nonnegative. Recall that for a matrix $X \in \Rset^{n \times n}$ with eigenvalues $\lambda_1,\cdots,\lambda_n$, the spectral radius $\rho(X)$ of $X$ is $\rho(X) = \max_{1\leq i \leq n} \left| \lambda_i \right|$. We write $X \succ 0$ to indicate that $X$ is positive definite.
\begin{theorem}
Suppose that $\beta \succ 0$ and $\gamma \succ 0$, and define $\Lambda := \frac{1}{\kappa} \gamma^{-1} \alpha W^{+}$, where $W^+ := [W^+_{gq}]^{n_q}_{g,q=1}$ and $W^- := [W^-_{gq}]^{n_g}_{g,q=1}$.
% Let $C= \max_g \sum_{q=1}^{n_g} W^{+}_{gq}$ and $\xi = \max_g \frac{\alpha^g}{\gamma^g}$.
The networked dynamics admits an equilibrium point $(u^*,s^*) \in \Rset^{n_g}_+ \times \Rset^{n_g}_+$ if the spectral radius of $\Lambda$ satisfies $\rho\left(\Lambda\right)<1$.
\label{thm.ss}
\end{theorem}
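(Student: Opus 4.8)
The plan is to reduce the existence of a nonnegative equilibrium to a fixed-point problem on the nonnegative orthant and then invoke Brouwer's theorem on a suitable invariant box. Setting both right-hand sides of~\eqref{eq.GRN.vec} to zero gives $\beta u^* = \alpha R(s^*)$ and $\beta u^* = \gamma s^*$; eliminating $u^*$ shows that a nonnegative equilibrium exists precisely when the map $F(s) := \gamma^{-1}\alpha R(s)$ has a fixed point $s^* \in \Rset^{n_g}_+$, in which case we recover $u^* = \beta^{-1}\gamma s^* \in \Rset^{n_g}_+$. So the entire task reduces to producing a fixed point of $F$ in the nonnegative orthant.

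Second, I would bound $F$ from above by an affine map built out of $\Lambda$. For $s \geq 0$ the denominator of each $R_g$ satisfies $D_g \geq \kappa > 0$, so entrywise one has $R(s) \leq \mathbf{1} + \tfrac{1}{\kappa} W^{+} s$, where $\mathbf{1}$ is the all-ones vector. Multiplying by the nonnegative diagonal-product matrix $\gamma^{-1}\alpha \geq 0$ yields $0 \leq F(s) \leq b + \Lambda s$ (entrywise) with $b := \gamma^{-1}\alpha \mathbf{1} \geq 0$ and $\Lambda = \tfrac{1}{\kappa}\gamma^{-1}\alpha W^{+}$ as in the statement; the lower bound follows from $R_g(s) > 0$ for $s \geq 0$. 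These inequalities are the bridge between the genuinely nonlinear map $F$ and the spectral hypothesis on $\Lambda$.

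Third, I would convert $\rho(\Lambda)<1$ into a compact convex invariant set. Since $\Lambda$ is entrywise nonnegative and $\rho(\Lambda)<1$, the Neumann series gives $(I-\Lambda)^{-1} = \sum_{k\geq 0}\Lambda^k \geq 0$ (entrywise), so $s^{\max} := (I-\Lambda)^{-1} b \geq 0$ is well defined and satisfies $b + \Lambda s^{\max} = s^{\max}$. On the box $K := \{\, s : 0 \leq s \leq s^{\max}\,\}$, monotonicity of multiplication by the nonnegative matrix $\Lambda$ gives $F(s) \leq b + \Lambda s \leq b + \Lambda s^{\max} = s^{\max}$, while $F(s)\geq 0$; hence $F(K)\subseteq K$. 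As $K$ is nonempty, compact, and convex, and $F$ is continuous on $K$ (each $R_g$ is a ratio of affine functions whose denominator is bounded below by $\kappa$), Brouwer's fixed-point theorem yields $s^* \in K$ with $F(s^*)=s^*$. Setting $u^* = \beta^{-1}\gamma s^* \geq 0$ then produces the desired equilibrium $(u^*,s^*) \in \Rset^{n_g}_+ \times \Rset^{n_g}_+$.

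The main obstacle is the construction of the invariant box: one must turn the spectral condition $\rho(\Lambda)<1$ into the entrywise nonnegativity of $(I-\Lambda)^{-1}$ — a standard fact about nonnegative matrices, available from the nonnegative-systems background in the SI — and then combine it with the affine upper bound on $F$ so that $K$ is genuinely mapped into itself. Once the box is in place, continuity of $F$ together with Brouwer finishes the argument, and recovering $u^*$ and verifying its nonnegativity is routine given $\beta,\gamma\succ 0$.
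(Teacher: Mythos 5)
Your proposal is correct and follows essentially the same route as the paper's own proof: reduce to a fixed point of $F(s)=\gamma^{-1}\alpha R(s)$, bound $R(s)\leq \mathbf{1}+\tfrac{1}{\kappa}W^{+}s$, use $\rho(\Lambda)<1$ to build the invariant box $\{0\leq s\leq (I-\Lambda)^{-1}\gamma^{-1}\alpha\mathbf{1}\}$, and apply Brouwer. Your explicit Neumann-series justification of the entrywise nonnegativity of $(I-\Lambda)^{-1}$ is a welcome detail that the paper leaves implicit.
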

For $\kappa=1$, one can see from Theorem~\ref{thm.ss} that a sufficient condition for the existence of equilibria is that 
$\rho\left(\gamma^{-1} \alpha W^{+}\right)<1$. Roughly speaking, this indicates that the positive regulation in the GRN cannot overwhelm the degradation. 

%============================
We next study the stability of the networked dynamics. We first consider the special case when $W^{-}$ is a zero matrix, i.e., no gene acts as a repressor of any other gene. In this case, we have a linear system 
\begin{align}
\begin{aligned}
\frac{\mathrm{d} u^g}{\mathrm{d} t} = \alpha^g  \left(\kappa+ \sum_{q=1}^{n_g} W_{gq}^{+} s^q\right) - \beta^g u^g, \quad 
\frac{\mathrm{d} s^g}{\mathrm{d} t} = \beta^g u^g - \gamma^g s^g,
\end{aligned}
\label{eq.GRN.linear}
\end{align}
which may be unstable depending on the system parameters. 

\begin{lemma} 
Suppose that $\alpha \succ 0$, $\beta \succ 0$, and the matrix $\gamma - \alpha W^+$ is positive definite. Suppose further that there are no repressors, i.e., $W^{-}_{gq} = 0$, $\forall g,q = 1,\cdots, n_g$. Then the networked dynamics is stable if, for all $g$, 
$\gamma^g > \beta^g > \alpha^g \sum_{h} W^{+}_{gh}$.
\label{lemma.stability0}
\end{lemma}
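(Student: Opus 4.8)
The plan is to exploit the linearity of~\eqref{eq.GRN.linear} and reduce stability to the Hurwitz property of a single structured matrix. Stacking $x := (u^\top, s^\top)^\top \in \Rset^{2n_g}$, the dynamics~\eqref{eq.GRN.linear} become the affine linear system $\frac{\mathrm d x}{\mathrm d t} = M x + b$, with
\begin{align*}
M = \begin{pmatrix} -\beta & \alpha W^+ \\ \beta & -\gamma \end{pmatrix}, \qquad b = \begin{pmatrix} \kappa\, \alpha \mathbf{1} \\ 0 \end{pmatrix}.
\end{align*}
Since $\beta,\gamma \succ 0$ are diagonal, $M$ being Hurwitz (all eigenvalues in the open left half-plane) immediately makes it invertible, so the unique equilibrium $x^* = -M^{-1}b$ is globally, indeed exponentially, asymptotically stable; this is exactly the notion of stability to be certified. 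It therefore suffices to show that the hypotheses force $M$ to be Hurwitz.

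First I would observe that $M$ is a \emph{Metzler} matrix: all its off-diagonal entries are nonnegative, because the off-diagonal block $\alpha W^+$ has entries $\alpha^g W^+_{gq}\ge 0$, the block $\beta$ is diagonal with entries $\beta^g\ge 0$, and the only negative entries $-\beta^g,-\gamma^g$ sit on the diagonal. For Metzler matrices, Hurwitz stability admits the order-theoretic certificate recalled in the nonnegative-systems background of Section~\ref{app:nonnegative}: $M$ is Hurwitz if and only if there exists a strictly positive vector $v$ with $Mv<0$ entrywise (equivalently, $-M$ is a nonsingular M-matrix). I would then simply test the all-ones vector $v=\mathbf{1}$. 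Its image splits into the $u$-block $-\beta^g + \alpha^g\sum_h W^+_{gh}$ and the $s$-block $\beta^g-\gamma^g$, so the two entrywise inequalities $M\mathbf{1}<0$ read precisely $\beta^g > \alpha^g\sum_h W^+_{gh}$ and $\gamma^g>\beta^g$ for every $g$. These are exactly the standing hypotheses $\gamma^g > \beta^g > \alpha^g\sum_h W^+_{gh}$, so $M\mathbf{1}<0$ and $M$ is Hurwitz.

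Finally I would record the role of the remaining hypothesis. The chain $\gamma^g>\beta^g>\alpha^g\sum_h W^+_{gh}$ yields $\gamma^g>\alpha^g\sum_h W^+_{gh}$, which makes $\gamma-\alpha W^+$ a strictly row-diagonally-dominant Z-matrix, hence a nonsingular M-matrix with nonnegative inverse; together with the assumed positive definiteness this guarantees that the equilibrium $s^*=(\gamma-\alpha W^+)^{-1}\kappa\alpha\mathbf{1}$, $u^*=\beta^{-1}\gamma s^*$ is well-defined and lies in $\Rset^{n_g}_+\times\Rset^{n_g}_+$, consistent with Theorem~\ref{thm.ss}. The main obstacle is conceptual rather than computational: because $M$ is not symmetric (nor is $\gamma-\alpha W^+$), a naive quadratic Lyapunov function $x^\top P x$ is not obviously available, and one must instead recognize the Metzler structure and guess the correct positive certificate. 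The payoff is that the choice $v=\mathbf{1}$ turns the pointwise hypotheses into the Hurwitz condition verbatim, with no spectral estimates required.
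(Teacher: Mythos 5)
Your proof is correct and takes essentially the same route as the paper: both reduce the claim to showing that the block matrix $\left[\begin{smallmatrix} -\beta & \alpha W^+ \\ \beta & -\gamma \end{smallmatrix}\right]$ is Hurwitz, and your Metzler certificate $M\mathbf{1}<0$ is verbatim the strict row-diagonal-dominance condition that the paper extracts from the Gershgorin disk theorem (the two criteria coincide for a matrix with negative diagonal and nonnegative off-diagonal entries). The only differences are packaging---M-matrix/positive-vector language versus Gershgorin, which the appendix does not actually state, so you would need to cite it separately---plus your additional (correct) observation that the hypotheses make $\gamma-\alpha W^+$ a nonsingular M-matrix, ensuring the equilibrium lies in the nonnegative orthant.
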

The above lemma indicates that, when there are no repressors, the activator‑only model becomes a positive-feedback-regulated system. Compared with Theorem \ref{thm.ss}, an extra condition on the splicing rate $\beta$ is needed to ensure the stability of equilibria. 

%--------------------------------------
When $W^{-}$ is not the all-zero matrix, the system is nonlinear and may or may not be stable depending on how strong the negative feedback is (i.e., how large the incremental gains are). Our next result provides a sufficient condition for system stability via the Lyapunov direct method (see Section~\ref{app:nonnegative} of SI for key notions of stability for nonnegative dynamical systems).
 
%-------------------
\begin{theorem}
Suppose that the conditions of Theorem~\ref{thm.ss} hold, and consider a positive definite function
\begin{align*}
V(u,s):= \frac{1}{2} \vnorm{u - u^*}^2_2 + \frac{1}{2} \vnorm{s-s^*}^2_2.
\end{align*}
Let $\vnorm{s}_1= \sum_{q=1}^{n_g} |s^q|$, and
suppose that there exists a $\delta >0$ such that, for each $s \ge 0$, $\min_g [W^{-}s]_g \geq \delta \vnorm{s}_1$. Furthermore, write
\begin{align*}
c_1:= \max_{g,q} \left(W_{gq}^{+}, W_{gq}^{-}\right), 
\quad 
\omega:=n_g \max\left(\frac{c_1}{\kappa}, \frac{c_1^3}{4 \delta \kappa (c_1 - \delta)}\right).
\end{align*}
If for all $g = 1,\cdots, n_g$, 
$\beta^g > \frac{\omega \vnorm{\alpha}}{2}$ and 
$\gamma^g > \frac{\omega \vnorm{\alpha}}{2} 
+ \frac{{\beta^g}^2}{4(\beta^g - \frac{\omega \vnorm{\alpha}}{2} )}$, 
then, $\dot{V}(u,s) < 0$ for all $(u,s) \neq (u^*,s^*)$. That is, $V(u,s)$ is a valid Lyapunov function, and $(u^*,s^*) \in \Rset^{n_g}_+ \times \Rset^{n_g}_+$ is a unique equilibrium of \eqref{eq.GRN.vec} which is globally asymptotically stable. 
\label{theorem.stablity}
\end{theorem}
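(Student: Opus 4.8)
The plan is to establish that $V$ is a global Lyapunov function for~\eqref{eq.GRN.vec}. Since $V$ is a squared Euclidean distance to $(u^*,s^*)$, it is automatically positive definite and radially unbounded, so the entire task reduces to proving the strict decrease $\dot V(u,s)<0$ for every $(u,s)\neq(u^*,s^*)$ in the nonnegative orthant. First I would pass to the error coordinates $\tilde u:=u-u^*$ and $\tilde s:=s-s^*$. Invoking the equilibrium identities $\beta u^*=\alpha R(s^*)$ and $\gamma s^*=\beta u^*$ (the equilibrium exists by Theorem~\ref{thm.ss}), the dynamics rewrite as $\dot{\tilde u}=\alpha\,\Delta R-\beta\tilde u$ and $\dot{\tilde s}=\beta\tilde u-\gamma\tilde s$, where $\Delta R:=R(s)-R(s^*)$. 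Differentiating $V$ along trajectories then yields
\begin{align*}
\dot V=\langle\tilde u,\alpha\,\Delta R\rangle-\tilde u^\top\beta\tilde u+\tilde s^\top\beta\tilde u-\tilde s^\top\gamma\tilde s .
\end{align*}

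The technical heart of the argument --- and the step I expect to be the main obstacle --- is a \emph{global} Lipschitz estimate of the form $\vnorm{\Delta R}_2\le\omega\,\vnorm{\tilde s}_2$. I would obtain it coordinatewise from the incremental-gain formula~\eqref{eq.gain}. Writing $R_g(s)-R_g(s^*)$ as a telescoping sum over the coordinates of $s$ (equivalently, integrating $\partial R_g/\partial s^q=(W^+_{gq}D_g-W^-_{gq}N_g)/D_g^2$ along the segment from $s^*$ to $s$, which stays in the convex cone $\Rset^{n_g}_+$), each single-coordinate gain is bounded in magnitude by $\max(W^+_{gq}/D_g,\,W^-_{gq}N_g/D_g^2)$. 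The activating term is immediately controlled by $c_1/\kappa$ since $D_g\ge\kappa$. For the repressing term, the hypothesis $\min_g[W^-s]_g\ge\delta\vnorm{s}_1$ furnishes the lower bound $D_g\ge\kappa+\delta\vnorm{s}_1$, while $N_g\le\kappa+c_1\vnorm{s}_1$; substituting $x=\vnorm{s}_1$ and maximizing $c_1(\kappa+c_1x)/(\kappa+\delta x)^2$ over $x\ge0$ gives the interior maximum $c_1^3/\big(4\delta\kappa(c_1-\delta)\big)$. Taking the larger of the two bounds and summing the $n_g$ coordinate contributions (with $\vnorm{\tilde s}_1\le\sqrt{n_g}\,\vnorm{\tilde s}_2$) produces exactly the constant $\omega$ and the claimed estimate $\vnorm{\Delta R}_2\le\omega\vnorm{\tilde s}_2$.

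With the Lipschitz bound in hand, I would bound the indefinite term by Cauchy--Schwarz and sub-multiplicativity of the operator norm, $\langle\tilde u,\alpha\,\Delta R\rangle\le\vnorm{\alpha}\,\omega\,\vnorm{\tilde u}_2\vnorm{\tilde s}_2$, and then split this product with Young's inequality as $\tfrac{\vnorm{\alpha}\omega}{2}(\vnorm{\tilde u}_2^2+\vnorm{\tilde s}_2^2)$. Because $\beta$ and $\gamma$ are diagonal, the resulting upper bound on $\dot V$ decouples into a sum over genes of scalar quadratic forms
\begin{align*}
\sum_{g}\Big[\big(\tfrac{\vnorm{\alpha}\omega}{2}-\beta^g\big)(\tilde u^g)^2+\beta^g\,\tilde u^g\tilde s^g+\big(\tfrac{\vnorm{\alpha}\omega}{2}-\gamma^g\big)(\tilde s^g)^2\Big].
\end{align*}
It then remains to check that each $2\times2$ form is negative definite. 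The leading-coefficient condition $\tfrac{\vnorm{\alpha}\omega}{2}-\beta^g<0$ is precisely $\beta^g>\tfrac{\omega\vnorm{\alpha}}{2}$, and the determinant (negative-definiteness) condition $4(\beta^g-\tfrac{\vnorm{\alpha}\omega}{2})(\gamma^g-\tfrac{\vnorm{\alpha}\omega}{2})>(\beta^g)^2$ rearranges exactly into the stated bound on $\gamma^g$. Hence each form is strictly negative off the origin, giving $\dot V<0$ whenever $(\tilde u,\tilde s)\neq0$.

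Finally, I would invoke the Lyapunov global asymptotic stability theorem for nonnegative systems (Section~\ref{app:nonnegative} of the SI): a radially unbounded positive definite $V$ with $\dot V<0$ off a single point forces every trajectory to converge to $(u^*,s^*)$, which is therefore globally asymptotically stable. Uniqueness is then a corollary, since at any equilibrium the vector field vanishes and so $\dot V=0$ there, whereas $\dot V<0$ at every point other than $(u^*,s^*)$. The only places needing care are verifying that the integration segment remains in $\Rset^{n_g}_+$ (so that $D_g\ge\kappa+\delta\vnorm{s}_1$ holds throughout) and confirming admissibility of the interior maximizer of the gain function, i.e. that $c_1>\delta$, which is implicit in $\omega$ being finite.
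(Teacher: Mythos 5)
Your proposal is correct and follows essentially the same route as the paper's proof: the same global Lipschitz lemma for $R$ with the same constant $\omega$ (your incremental-gain/mean-value derivation is just the paper's entrywise Jacobian bound followed by the identical maximization of $\frac{\kappa c_1+c_1^2 r}{(\kappa+\delta r)^2}$), the same Young's-inequality split with $\epsilon=\tfrac{\omega\vnorm{\alpha}}{2}$, and the same negative-definiteness check (your per-gene $2\times 2$ determinant condition coincides with the paper's Schur-complement condition because $\alpha,\beta,\gamma$ are diagonal). The only cosmetic discrepancy is that the interior maximizer is admissible when $c_1>2\delta$ rather than $c_1>\delta$, which is immaterial since both you and the paper take the maximum of the two candidate bounds.
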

In the above theorem, the condition $\min_g [W^- s]_g \ge \delta \|s\|_1$ guarantees that each gene is repressed according to the total spliced RNA level in the network. Since the repressors correspond to negative feedback, this adds robustness through negative feedback regulation. The set of sufficient conditions describes the interactions between the splicing rate $\beta$, transcription rate $\alpha$, degradation rate $\gamma$, and the constant $\omega$ that depends on the network.

%-------------------------
Finally, we remark that in the special case of a single gene,  the dynamics is described by the set of linear ODEs as
\begin{align}
\frac{\mathrm{d} }{\mathrm{d} t} \begin{bmatrix} u\\ s \end{bmatrix}  
= \begin{bmatrix} -\beta & 0 \\
\beta & -\gamma \end{bmatrix}
\begin{bmatrix} u\\ s\end{bmatrix} 
+\begin{bmatrix}  \alpha \\0\end{bmatrix} .
\end{align}
The eigenvalues of $\begin{bmatrix} -\beta & 0 \\
\beta & -\gamma \end{bmatrix}$ are $\lambda_1=-\beta$ and $\lambda_2=-\gamma$. Hence, as long as $\beta>0$ and $\gamma>0$, we have $\lambda_1<0$ and $\lambda_2<0$. This implies the unique equilibrium $(u^*,s^*) = \left(\frac{\alpha}{\beta},\frac{\alpha}{\gamma} \right)$ is always stable.

%==============================
\section{Spatially-Coupled GRN-Driven RNA Velocity Models} \label{sec:GRNConsensus}

We now turn our attention to the GRN-based velocity model coupled with a spatial network model, per Equation~\ref{eq.multinetwork.dynamics}, repeated below for convenience. Recall that $n_c$ denotes the number of cells, each of which has an internal regulatory network of $n_g$ genes, and that for all genes $g \in \{1,\dots,n_g\}$ and cells $i \in \{1,\dots,n_c\}$,
\begin{align}
\begin{aligned}
\frac{\mathrm{d} u_i^g}{\mathrm{d} t} &= \alpha_i^g \ \frac{\kappa+ \sum_{q=1}^{n_g} W_{gq}^{+} s_i^q(t)}{\kappa+ \sum_{q=1}^{n_g} W_{gq}^{-} s_i^q(t)}- \beta_i^g u_i^g(t),\\
\frac{\mathrm{d} s_i^g}{\mathrm{d} t} &= \beta_i^g u_i^g(t) 
-\gamma_i^g s^g_i(t) 
+ {c} \sum_{j=1}^{n_c} A_{ij}\left( s^g_j(t) - s^g_i(t) \right).
\end{aligned}
\label{eq.multinetwork.dynamics}
\end{align}
The unspliced and spliced RNA concentrations, $u_i^g$ and $s_i^g$, must remain nonnegative for all time $t$. This is obvious for the single-cell model~\eqref{eq.GRN}. We now show that this property still holds for the spatially-coupled RNA network velocity model \eqref{eq.multinetwork.dynamics} when we include the consensus term ${c} \sum_{j=1}^{n_c} A_{ij}\left( s^g_j(t) - s^g_i(t)\right)$ that captures the intercellular coupling. The next lemma guarantees that the model~\eqref{eq.multinetwork.dynamics} is essentially nonnegative, thus biologically meaning as $u_i^g$ and $s_i^g$ for all genes $g \in \{1,\dots,n_g\}$ and cells $i \in \{1,\dots,n_c\}$ can never take negative values.

\begin{lemma} \label{lemma.essentially.nonnegative}
Consider the spatially-coupled GRN-driven RNA velocity model~\eqref{eq.multinetwork.dynamics}. 
Assume that all the parameters $\alpha_i^g, \beta_i^g, \gamma_i^g, \kappa, c, W_{gq}^{\pm}, A_{ij}$ are nonnegative for each cell  $i = 1, \ldots, n_c$ and gene $g = 1, \ldots, n_g$.
Then \eqref{eq.multinetwork.dynamics} is essentially nonnegative, i.e.,  if the initial condition satisfies $u_i^g(0) \geq 0$, $s_i^g(0) \geq 0$ for all $i, g $, then the solution satisfies $u_i^g(t) \geq 0$, $s_i^g(t) \geq 0$ for all $t \geq 0$. 
\end{lemma}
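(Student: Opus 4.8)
The plan is to establish forward invariance of the nonnegative orthant by verifying the \emph{subtangentiality} (tangent) condition that characterizes essentially nonnegative vector fields, and then invoking the corresponding invariance result recalled in Section~\ref{app:nonnegative} of the SI. Concretely, I would stack all $2 n_c n_g$ state variables into a single vector $x$ collecting the $u_i^g$ and $s_i^g$, write the right-hand side of \eqref{eq.multinetwork.dynamics} as a vector field $f(x)$, and recall that it suffices to show: for every coordinate $x_k$ and every $x \in \Rset^{2 n_c n_g}_+$ with $x_k = 0$, one has $f_k(x) \ge 0$. Geometrically, this says the flow never points outward across any face of the orthant, so a trajectory started inside can never exit.

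I would then check this condition separately for the two families of coordinates. For an unspliced coordinate, setting $u_i^g = 0$ while keeping all other states nonnegative kills the $-\beta_i^g u_i^g$ term, leaving $\alpha_i^g R_g(s_i)$; since $\kappa \ge 0$, $W^{\pm}_{gq} \ge 0$, and $s_i^q \ge 0$, the numerator is nonnegative and the denominator is positive, so the derivative is $\ge 0$. For a spliced coordinate, setting $s_i^g = 0$ simultaneously annihilates both potentially negative contributions: the degradation term $-\gamma_i^g s_i^g$ and the self-coupling part $-c s_i^g \sum_j A_{ij}$ of the consensus term both vanish, leaving $\beta_i^g u_i^g + c \sum_j A_{ij} s_j^g \ge 0$ because every factor is nonnegative.

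The real content — and the step I would treat most carefully — is the consensus term, since it is the only place where a minus sign survives in the model. The key observation is that the negative part $-c s_i^g \sum_j A_{ij}$ is \emph{proportional to} $s_i^g$ and therefore disappears exactly on the face $\{s_i^g = 0\}$, so only the incoming flux $c \sum_j A_{ij} s_j^g$ remains, and it is nonnegative. Equivalently, one can run a first-crossing argument: if $t^\star$ were the first time some coordinate reached $0$ from above, continuity would force all coordinates to be nonnegative at $t^\star$ with the offending one equal to $0$, and the tangent condition would make its derivative nonnegative there, contradicting a downward crossing.

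Before applying the invariance theorem I would also dispatch the regularity prerequisites: the vector field must be locally Lipschitz on the orthant so that solutions exist and are unique, which requires the denominator $D_g = \kappa + \sum_q W^-_{gq} s_i^q$ to stay bounded away from zero. On $\Rset^{n_g}_+$ one has $D_g \ge \kappa$, so this holds whenever $\kappa > 0$; the degenerate case $\kappa = 0$ (where $R_g$ may be $0/0$ at the origin) I would either exclude or resolve by the limiting-value convention, noting it does not affect the boundary sign analysis. With well-posedness in hand, the verified tangent condition yields forward invariance of $\Rset^{2 n_c n_g}_+$, which is precisely the asserted essential nonnegativity. The main obstacle is thus not the sign bookkeeping (routine) but making the boundary-crossing/invariance step rigorous together with the well-definedness of the rational nonlinearity.
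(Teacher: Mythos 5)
Your proof is correct and follows essentially the same route as the paper's: verify the essential-nonnegativity (subtangentiality) condition coordinate-by-coordinate, noting that the only negative contributions ($-\beta_i^g u_i^g$, $-\gamma_i^g s_i^g$, and $-c\, s_i^g \sum_j A_{ij}$) each vanish on the corresponding face of the orthant, and then invoke the invariance characterization from the SI. Your additional remarks on well-posedness and the $\kappa=0$ degeneracy go beyond what the paper records but do not change the argument.
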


%-----------------------
\subsection{Equilibria and Stability Analysis.} 
Similar to what was presented in the previous section, the following results characterize the existence of equilibria and the stability of the spatially-coupled GRN-driven RNA velocity model.

For each cell $i \in \{1,\dots,n_c\}$, define $s_i = [s_i^1,\cdots s_i^{n_g}]^\top \in \Rset^{n_g}$,
$\alpha_i = \text{diag}\left(\alpha_i^{1},\cdots, \alpha_i^{n_g} \right) \in \Rset^{n_g \times n_g}$,
$\beta_i = \text{diag}\left(\beta_i^{1},\cdots, \beta_i^{n_g} \right)\in \Rset^{n_g \times n_g}$,
$\gamma_i = \text{diag}\left(\gamma_i^{1},\cdots, \gamma_i^{n_g} \right)\in \Rset^{n_g \times n_g}$, 
and $s= [s_1^\top,\cdots s_{n_g}^\top]^\top \in \Rset^{n_g \cdot n_c}$. 

\begin{theorem}
Suppose that $\beta_i \succ 0$ and $\gamma_i \succ 0$ for all cells  $i = 1, \ldots, n_c$. Define the matrix $\Lambda \in \Rset^{n_g \cdot n_c \times n_g \cdot n_c}$ as
\begin{align}
\Lambda := {\rm diag}\left( \frac{1}{\kappa} \gamma_1^{-1} \alpha_1 W^{+}, \cdots, \frac{1}{\kappa} \gamma_{n_c}^{-1} \alpha_{n_c} W^{+}\right)    
+ {c} \left(A \otimes I_{n_g} \right) {\rm diag}\left(\gamma_1^{-1},\cdots,\gamma_{n_c}^{-1}  \right).
\end{align}
The spatially coupled dynamics \eqref{eq.multinetwork.dynamics} admits an equilibrium $(u^*,s^*)$ in the nonnegative orthant if the spectral radius of $\Lambda$ satisfies $\rho(\Lambda) <1$. 
\label{thm.ss-multi}
\end{theorem}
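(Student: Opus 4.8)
The plan is to mirror the proof of Theorem~\ref{thm.ss}, but to first eliminate the unspliced variables and then absorb the consensus coupling into an \emph{effective degradation} term. Setting $\dot u_i^g=0$ in~\eqref{eq.multinetwork.dynamics} gives $u_i^g=\frac{\alpha_i^g}{\beta_i^g}R_g(s_i)$, which is automatically nonnegative once $s_i\ge 0$; hence it suffices to produce a nonnegative $s^\ast$ and recover $u^\ast$ from it. Substituting $\beta_i^g u_i^g=\alpha_i^g R_g(s_i)$ into $\dot s_i^g=0$ eliminates $u$ and leaves the closed spliced-only system $\gamma_i^g s_i^g+c\big(\sum_{j} A_{ij}\big)s_i^g=\alpha_i^g R_g(s_i)+c\sum_{j}A_{ij}s_j^g$. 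Moving the net out-flow to the left-hand side, I would rewrite this as the fixed-point equation $s=\hat T(s)$ with
\begin{align*}
\hat T(s)_i^g:=\frac{\alpha_i^g\,R_g(s_i)+c\sum_{j=1}^{n_c}A_{ij}\,s_j^g}{\gamma_i^g+c\sum_{j=1}^{n_c}A_{ij}}.
\end{align*}
The point of this form is that, for $s\ge 0$, both numerator and denominator are nonnegative and the denominator is at least $\gamma_i^g>0$, so $\hat T$ is a \emph{continuous self-map of the nonnegative orthant} whose fixed points are exactly the nonnegative spliced equilibria of~\eqref{eq.multinetwork.dynamics}.

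Next I would dominate $\hat T$ by an affine map. Since $D_g(s_i)=\kappa+\sum_q W^-_{gq}s_i^q\ge\kappa$, the control function obeys the elementwise bound $R_g(s_i)\le 1+\tfrac1\kappa[W^+s_i]_g$, exactly as in the single-cell case. Using this in the numerator and shrinking the denominator from $\gamma_i^g+c\sum_j A_{ij}$ down to $\gamma_i^g$ (legitimate because the numerator is nonnegative) yields the coordinatewise inequality $\hat T(s)\le b+\Lambda s$, valid for all $s\ge 0$. Here $b:=\mathcal G^{-1}\mathcal A\mathbf 1\ge 0$ with $\mathcal G:=\text{diag}(\gamma_1,\dots,\gamma_{n_c})$ and $\mathcal A:=\text{diag}(\alpha_1,\dots,\alpha_{n_c})$, and $\Lambda\ge 0$ is assembled from the block-diagonal regulatory part $\text{diag}\big(\tfrac1\kappa\gamma_i^{-1}\alpha_iW^+\big)$ and the nonnegative consensus part built from $c$, $A\otimes I_{n_g}$ and the degradation inverses --- i.e.\ the matrix $\Lambda$ in the statement, up to the similarity $\mathcal G(\cdot)\mathcal G^{-1}$, which leaves the spectrum, and hence $\rho(\Lambda)$, unchanged. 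Crucially $\Lambda$ is entrywise nonnegative because every ingredient ($\gamma_i^{-1}$, $\alpha_i$, $W^+$, $c$, $A$) is nonnegative.

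With the affine envelope in hand, the existence argument is the nonnegative-matrix/Brouwer argument of Theorem~\ref{thm.ss}. Because $\Lambda\ge 0$ and $\rho(\Lambda)<1$, the Neumann series converges and $(I-\Lambda)^{-1}=\sum_{k\ge 0}\Lambda^k\ge 0$; set $\bar s:=(I-\Lambda)^{-1}b\ge 0$, so that $b+\Lambda\bar s=\bar s$. I would then show the order interval $\mathcal K:=\{s:0\le s\le\bar s\}$ --- a nonempty, compact, convex box --- is invariant under $\hat T$: for $s\in\mathcal K$ one has $\hat T(s)\ge 0$ (the self-map property above) and $\hat T(s)\le b+\Lambda s\le b+\Lambda\bar s=\bar s$. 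Brouwer's fixed-point theorem applied to the continuous map $\hat T:\mathcal K\to\mathcal K$ then furnishes $s^\ast\in\mathcal K\subset\Rset^{n_g\cdot n_c}_+$ with $\hat T(s^\ast)=s^\ast$, and $u^\ast$ formed by stacking $\beta_i^{-1}\alpha_i R(s_i^\ast)\ge 0$ completes a nonnegative equilibrium $(u^\ast,s^\ast)$ of~\eqref{eq.multinetwork.dynamics}.

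The main obstacle, and the only place where the coupling genuinely changes the analysis relative to Theorem~\ref{thm.ss}, is ensuring that the reduced $s$-dynamics still defines a nonnegative self-map dominated by a \emph{nonnegative} matrix. The consensus term $c\sum_j A_{ij}(s_j^g-s_i^g)$ is sign-indefinite, so the naive rearrangement $s_i^g=\tfrac1{\gamma_i^g}\big[\alpha_i^gR_g(s_i)+c\sum_jA_{ij}(s_j^g-s_i^g)\big]$ need not be nonnegative on the orthant; this is precisely why I move the out-flow $c(\sum_jA_{ij})s_i^g$ into the denominator to obtain $\hat T$. The second delicate point is verifying that the dominating matrix is exactly the nonnegative matrix whose spectral radius is controlled by the hypothesis: the block-diagonal regulatory part is inherited directly from the single-cell bound, while the cross-cell part must be checked to enter with a nonnegative sign and with the $\gamma^{-1}$ weighting matching $\Lambda$ (the left/right placement of $\text{diag}(\gamma_i^{-1})$ amounting only to the spectrum-preserving conjugation by $\mathcal G$). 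Once these two checks are in place, Lemma~\ref{lemma.essentially.nonnegative} confirms the nonnegative orthant is the correct invariant domain, and the remainder of the argument is identical to the single-cell case.
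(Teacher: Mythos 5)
Your proposal is correct and follows essentially the same route as the paper's proof: eliminate $u$ at equilibrium, absorb the out-flow $c\sum_j A_{ij}s_i^g$ into the denominator to get the continuous nonnegative self-map $F_i^g(s)=\bigl(\alpha_i^g R_i^g(s_i)+c\sum_j A_{ij}s_j^g\bigr)/\bigl(\gamma_i^g+c\sum_j A_{ij}\bigr)$, dominate it by the affine map $b+\Lambda s$ using $R_i^g(s_i)\le 1+\tfrac{1}{\kappa}[W^+s_i]_g$, and apply Brouwer on the box $[0,(I-\Lambda)^{-1}b]$. Your side remark about the left/right placement of $\mathrm{diag}(\gamma_i^{-1})$ relative to $A\otimes I_{n_g}$ correctly flags a genuine (minor) discrepancy between the paper's displayed $\Lambda$ and the blocks used in its proof, though a full diagonal conjugation also perturbs the block-diagonal regulatory part, so that reconciliation is slightly looser than stated.
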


Just like in the preceding section, we can give sufficient conditions for stability in the absence of repressors:

%----------------
\begin{lemma}
Suppose the condition of Theorem \ref{thm.ss-multi} holds. When there are no repressors, i.e., $W^{-}_{gq} = 0$, $\forall g,q = 1,\cdots, n_g$, the equilibrium of the network dynamics is stable if for all cells $i \in \{1,\dots,n_c\}$ and genes $g \in \{1,\dots,n_g\}$, 
$\gamma_i^g  > \beta_i^g > \frac{\alpha_i^g}{\kappa} \sum_{h} W^{+}_{gh}$.
\label{lemma.stability0-multi}
\end{lemma}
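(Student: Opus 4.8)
The plan is to use the fact that, with no repressors, the model \eqref{eq.multinetwork.dynamics} collapses to an affine system, and then to certify stability by locating the spectrum of its system matrix with the Gershgorin circle theorem. The two stated inequalities will turn out to be exactly the conditions that push every Gershgorin disk into the open left half-plane, so no separate Lyapunov construction is needed.

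First I would reduce to a linear system. Setting $W^{-}_{gq}=0$ makes the denominator $D_g\equiv\kappa$ constant, so $R_g(s_i)=1+\frac{1}{\kappa}[W^{+}s_i]_g$ is affine in $s_i$ and the full dynamics takes the form $\frac{\mathrm{d}}{\mathrm{d}t}[u^\top,s^\top]^\top=J[u^\top,s^\top]^\top+b$ for a constant vector $b$, where $u,s\in\Rset^{n_g n_c}$ are stacked cell-major over cells and genes. Writing the consensus coupling through the graph Laplacian $L=\mathrm{diag}(d_1,\dots,d_{n_c})-A$ with $d_i:=\sum_{j\neq i}A_{ij}$, the system matrix has the block form
\[
J=\begin{bmatrix} -\mathcal{B} & \frac{1}{\kappa}\mathcal{A}\,(I_{n_c}\otimes W^{+}) \\[2pt] \mathcal{B} & -\mathcal{G}-c\,(L\otimes I_{n_g}) \end{bmatrix},
\]
where $\mathcal{A}=\mathrm{diag}(\alpha_1,\dots,\alpha_{n_c})$, $\mathcal{B}=\mathrm{diag}(\beta_1,\dots,\beta_{n_c})$, $\mathcal{G}=\mathrm{diag}(\gamma_1,\dots,\gamma_{n_c})$. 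Since $\alpha,\beta,\gamma,W^{+},A,c\ge 0$ and $L$ has nonpositive off-diagonal entries, $J$ is a Metzler matrix, and the equilibrium furnished by Theorem~\ref{thm.ss-multi} is (asymptotically) stable precisely when $J$ is Hurwitz.

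Next I would apply the Gershgorin circle theorem row by row. For the row indexed by $u_i^g$ the diagonal entry is $-\beta_i^g$ and the only off-diagonal entries are $\frac{1}{\kappa}\alpha_i^g W^{+}_{gq}$ in the $s_i$ block, so the disk is centered at $-\beta_i^g$ with radius $\frac{\alpha_i^g}{\kappa}\sum_h W^{+}_{gh}$; its rightmost point is negative exactly when $\beta_i^g>\frac{\alpha_i^g}{\kappa}\sum_h W^{+}_{gh}$, the second stated inequality. For the row indexed by $s_i^g$ the diagonal entry is $-\gamma_i^g-c\,d_i$, while the off-diagonal entries are $\beta_i^g$ (the coupling to $u_i^g$) together with $cA_{ij}$ for $j\neq i$ (the consensus coupling to $s_j^g$); the latter sum to $c\,d_i$, so the radius is $\beta_i^g+c\,d_i$. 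The crucial point is that the consensus degree term $c\,d_i$ appears in both the center and the radius and therefore cancels: the rightmost point of this disk is $-\gamma_i^g-c\,d_i+\beta_i^g+c\,d_i=\beta_i^g-\gamma_i^g$, which is negative exactly when $\gamma_i^g>\beta_i^g$, the first stated inequality.

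Under both hypotheses every Gershgorin disk then lies strictly in the open left half-plane, so all eigenvalues of $J$ have negative real part, $J$ is Hurwitz, and the equilibrium from Theorem~\ref{thm.ss-multi} is stable. I expect the only delicate point to be the Laplacian bookkeeping for the $s_i^g$ rows --- correctly matching the diagonal degree $-c\,d_i$ against the sum $c\sum_{j\neq i}A_{ij}=c\,d_i$ of the off-diagonal consensus weights --- since it is exactly this cancellation that explains why the intercellular coupling imposes no stability requirement beyond the single-cell condition of Lemma~\ref{lemma.stability0}.
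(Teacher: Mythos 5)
Your proposal is correct and follows essentially the same route as the paper: reduce to the affine system, form the block system matrix, and apply the Gershgorin disk theorem, with the key observation that the consensus degree term $c\,d_i$ cancels between the diagonal entry and the row radius so that only $\gamma_i^g>\beta_i^g$ survives. The only difference is a cosmetic one in the ordering of the Kronecker factors, reflecting a different stacking of the state vector.
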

%%%%%%%%%%%%%%%%%%%%%%%%%%%%%

We next consider the case where gene expression is also allowed to be negatively regulated, i.e., $W^{-}$ is not the zero matrix. 
%----------------
\begin{theorem}
Suppose that the conditions of Theorem~\ref{thm.ss-multi} hold. Consider the positive definite function \begin{align}
    V(u,s):= \frac{1}{2} \sum_{i=1}^{n_c} \sum_{g= 1}^{n_g} \left( \left( u_i^g - {u_i^g}^* \right)^2 + \left( s_i^g - {s_i^g}^*\right)^2 \right).
\end{align}
Suppose that, for each cell $i$, there exists a real number $\delta_i >0$ such that $\min_g [W^{-}s]_g \geq \delta_i \vnorm{s_i}_1$ for all $s_i$ with nonnegative coordinates. Let $c_1:= \max_{g,q} \left(W_{gq}^{+}, W_{gq}^{-}\right)$, and $\omega_i:=\sqrt{n_g} \max\left(\frac{c_1}{\kappa}, \frac{c_1^3}{4 \delta_i \kappa (c_1 - \delta_i)}\right)$.
Let $\gamma_i:= \max_{g} \gamma_i^g$.
If for all cells $i \in \{1,\dots,n_c\}$ and genes $g \in \{1,\cdots, n_g\}$, 
$\beta_i^g > \frac{\omega_i \vnorm{\alpha_i}_F }{2}$, and 
$\gamma_i^g >  \frac{\omega_i \vnorm{\alpha_i}_F }{2}
+ \frac{({\beta_i^g})^2}{4(\beta_i^g - \frac{\omega_i \vnorm{\alpha_i}_F }{2})},$ 
then, $\dot{V}(u,s) < 0$ for all $(u,s) \neq (u^*,s^*)$. That is, $V(u,s)$ is a valid Lyapunov function, and $(u^*,s^*)$ is a unique globally asymptotically stable equilibrium.
\label{theorem.stablity-multi}
\end{theorem}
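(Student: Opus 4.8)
\section*{Proof proposal for Theorem~\ref{theorem.stablity-multi}}

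The plan is to differentiate $V$ along trajectories, subtract the equilibrium identities $\alpha_i^g R_g(s_i^*)=\beta_i^g {u_i^g}^*$ and $\beta_i^g {u_i^g}^*-\gamma_i^g {s_i^g}^*+c\sum_j A_{ij}({s_j^g}^*-{s_i^g}^*)=0$ to pass to the error coordinates $\tilde u_i^g := u_i^g-{u_i^g}^*$ and $\tilde s_i^g := s_i^g-{s_i^g}^*$, and then exploit the structure of the resulting terms. Writing $\dot u_i^g=\alpha_i^g\left(R_g(s_i)-R_g(s_i^*)\right)-\beta_i^g\tilde u_i^g$ and $\dot s_i^g=\beta_i^g\tilde u_i^g-\gamma_i^g\tilde s_i^g+c\sum_j A_{ij}(\tilde s_j^g-\tilde s_i^g)$ gives
\[
\dot V=\Phi+c\sum_{g}\sum_{i}\tilde s_i^g\sum_{j}A_{ij}\left(\tilde s_j^g-\tilde s_i^g\right),
\]
where $\Phi:=\sum_{i,g}\left[\alpha_i^g\tilde u_i^g\left(R_g(s_i)-R_g(s_i^*)\right)-\beta_i^g(\tilde u_i^g)^2+\beta_i^g\tilde u_i^g\tilde s_i^g-\gamma_i^g(\tilde s_i^g)^2\right]$. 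The first structural observation, which renders the intercellular coupling harmless, is that for each gene $g$ the consensus term equals $-\tfrac12\sum_{i,j}A_{ij}(\tilde s_i^g-\tilde s_j^g)^2\le 0$ whenever $A$ is symmetric and nonnegative (the negated Dirichlet form of the graph Laplacian of $A$); I would flag the symmetry of $A$ explicitly, as the bidirectional exchange interpretation suggests. Hence $\dot V\le\Phi$, and it suffices to show $\Phi<0$ off equilibrium. Crucially, $\Phi$ decouples over cells and is, cell by cell, exactly the quantity analyzed in the single-cell Theorem~\ref{theorem.stablity}.

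The analytic heart of the argument is a uniform Lipschitz bound on each $R_g$ over the nonnegative orthant. Using the incremental-gain computation~\eqref{eq.gain}, equivalently $\partial R_g/\partial s^q=(D_g W_{gq}^+-N_g W_{gq}^-)/D_g^2$, together with the constraint $W_{gq}^+W_{gq}^-=0$, I would split coordinates into activators and repressors. For an activator coordinate, $D_g\ge\kappa$ gives $|\partial R_g/\partial s^q|=W_{gq}^+/D_g\le c_1/\kappa$. For a repressor coordinate, $|\partial R_g/\partial s^q|=N_g W_{gq}^-/D_g^2$, and I would insert $N_g\le\kappa+c_1\vnorm{s_i}_1$ together with the standing hypothesis $D_g\ge\kappa+\delta_i\vnorm{s_i}_1$ and maximize the scalar map $x\mapsto c_1(\kappa+c_1 x)/(\kappa+\delta_i x)^2$ over $x\ge 0$. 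This single-variable optimization is the main obstacle: the interior maximizer $x^\star=\kappa(c_1-2\delta_i)/(c_1\delta_i)$ yields the value $c_1^3/\left(4\delta_i\kappa(c_1-\delta_i)\right)$, precisely the second entry of $\omega_i$ (when $c_1\le 2\delta_i$ the maximum sits at $x=0$ and the bound $c_1/\kappa$ dominates). Taking the larger of the two bounds and summing $n_g$ squared coordinate derivatives gives $\vnorm{\nabla R_g}_2\le\omega_i$, so integrating the gradient along the segment from $s_i^*$ to $s_i$, which stays in the convex nonnegative orthant, yields $|R_g(s_i)-R_g(s_i^*)|\le\omega_i\vnorm{\tilde s_i}_2$.

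With this bound the remainder is bookkeeping that reproduces the single-cell estimate cell by cell. Two applications of Cauchy--Schwarz bound the regulatory cross term by $\sum_i\omega_i\vnorm{\alpha_i}_F\vnorm{\tilde u_i}_2\vnorm{\tilde s_i}_2$; writing $M_i:=\omega_i\vnorm{\alpha_i}_F$ and applying Young's inequality $\vnorm{\tilde u_i}_2\vnorm{\tilde s_i}_2\le\tfrac12(\vnorm{\tilde u_i}_2^2+\vnorm{\tilde s_i}_2^2)$ redistributes it coordinatewise, so that $\Phi$ is dominated by a sum over pairs $(i,g)$ of quadratic forms in $(\tilde u_i^g,\tilde s_i^g)$ with matrix $\left[\begin{smallmatrix}-(\beta_i^g-M_i/2)&\beta_i^g/2\\ \beta_i^g/2&-(\gamma_i^g-M_i/2)\end{smallmatrix}\right]$. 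By Sylvester's criterion each such form is negative definite exactly when $\beta_i^g>M_i/2$ and $(\beta_i^g-M_i/2)(\gamma_i^g-M_i/2)>(\beta_i^g)^2/4$, which rearrange to the two stated hypotheses. Consequently $\Phi<0$ for every $(u,s)\neq(u^*,s^*)$, hence $\dot V<0$ there; since $V$ is positive definite and radially unbounded and the orthant is forward invariant by Lemma~\ref{lemma.essentially.nonnegative}, the Lyapunov theorem for essentially nonnegative systems (SI background) gives global asymptotic stability, and $\dot V<0$ away from $(u^*,s^*)$ forces this equilibrium to be the unique one.
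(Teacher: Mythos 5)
Your proposal is correct and follows essentially the same route as the paper's proof: pass to error coordinates, kill the intercellular coupling via the symmetrized Dirichlet-form identity $-\tfrac{c}{2}\sum_{i,j}A_{ij}(\tilde s_i^g-\tilde s_j^g)^2\le 0$, establish the Lipschitz bound $\omega_i$ on $R_i^g$ by maximizing the same scalar map of $\vnorm{s_i}_1$, and close with Cauchy--Schwarz, Young's inequality, and a Schur-complement/Sylvester check of the per-cell quadratic form. Your explicit flagging of the symmetry of $A$ (which the paper uses silently) and the activator/repressor case split in the Jacobian bound are minor presentational refinements, not a different argument.
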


\subsection{Cellular-Network Consensus.}

We now turn to the analysis of cellular-network consensus due to spatial coupling in the spliced dynamics. Recall that the spliced dynamics in the spatially-coupled RNA velocity model are given by
\begin{align}
\begin{aligned}
\frac{\mathrm{d} s_i^g}{\mathrm{d} t} = \beta_i^g u_i^g(t) 
-\gamma_i^g s^g_i(t) + {c} \sum_{j=1}^{n_c} A_{ij}\left( s^g_j(t) - s^g_i(t) \right), 
\end{aligned}
\label{eq.multinetwork.s}
\end{align}
for all cells $i \in \{1,\dots,n_c\}$ and genes $g \in \{1,\dots,n_g\}$.
Let $s^g := [s_1^g,\cdots, s_{n_c}^g]^\top\in \Rset^{n_c}$, $u^g := [u_1^g,\cdots, u_{n_c}^g]^\top\in \Rset^{n_c}$, $B^g := \text{diag}(\beta_1^g,\cdots, \beta_{n_c}^g)$, $\Gamma^g := \text{diag}(\gamma_1^g,\cdots,\gamma_{n_c}^g)$, $D := {\rm diag} (A\bm{1})$, and let $L := D - A$ denote the (unnormalized) graph Laplacian of the cellular network \cite{mesbashi2010graphs}. Then, we have
\begin{align}
\begin{aligned}
\frac{\mathrm{d} s^g}{\mathrm{d} t} = B^g u^g(t) 
-\Gamma^g s^g(t) - {c} L s^g(t). 
\end{aligned}
\end{align}

Define the linear projection operator $P_c := \frac{1}{n_c} \bm{1} \bm{1}^\top$. We can decompose $s^g(t)$ into two parts: the mean field (average) $\overline{s}^g(t) := P_c s^g(t)\in \Rset^{n_c}$, and the deviation $\tilde{s}^g(t):=s^g(t) - \overline{s}^g(t) \in \Rset^{n_c}$. For the mean-field dynamics, we have
\begin{align}
\begin{aligned}
\frac{\mathrm{d} \overline{s}^g}{\mathrm{d} t}
=& P_c
\left(B^g u^g(t) -\Gamma^g s^g(t) \right) 
-P_c L s^g(t). 
\end{aligned}
\end{align}
Since $\bm{1}^\top L = 0$, the mean-field dynamics reduces to
\begin{align}
\begin{aligned}
\frac{\mathrm{d} \overline{s}^g}{\mathrm{d} t} 
= P_c \left( B^g u^g(t) -\Gamma^g s^g(t) \right).
\end{aligned}
\end{align}
In order to establish if convergence to a cellular consensus is possible, we establish the following theorem, which characterizes the evolution of the deviation $\tilde{s}^g$.

\begin{theorem}
Assume that $u^g(t)$,$s^g(t)$ are bounded for all genes $g=1,\cdots,n_g$:
\begin{align}
\sup_{t \ge 0} \vnorm{u^g(t)}_2 < \infty, \quad \sup_{t \ge 0} \vnorm{s^g(t)}_2 < \infty.
\end{align}
Then, asymptotically, the deviation can be upper-bounded according to
\begin{align}
\limsup_{t \to \infty} \vnorm{\tilde{s}^g(t)}_2^2 \leq \frac{1}{c \lambda_2(L)}Z_m^g,
\label{eq.fluctation}
\end{align}    
where $\lambda_2(L)$ is the second smallest eigenvalue of the graph Laplacian and $0<Z_m^g<\infty$ is a constant equal to $Z_m^g:=\max_{t \ge 0} \vnorm{B^g u^g(t) -\Gamma^g s^g(t)}_2$.
\label{thm.consensus}
\end{theorem}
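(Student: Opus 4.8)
The plan is to derive a self-contained differential inequality for the deviation $\tilde{s}^g(t) = (I - P_c)s^g(t)$ and then close it with a scalar comparison argument. First I would write down the dynamics of $\tilde{s}^g$ by applying $I - P_c$ to the spliced dynamics $\frac{\mathrm{d} s^g}{\mathrm{d} t} = B^g u^g - \Gamma^g s^g - cLs^g$. The key structural fact is that, for an undirected cellular network, $L$ is symmetric positive semidefinite with kernel $\mathrm{span}(\bm 1)$, so $L P_c = P_c L = 0$ and hence $(I - P_c)Ls^g = L\tilde{s}^g$. Writing $w^g(t) := B^g u^g(t) - \Gamma^g s^g(t)$, this yields the closed deviation dynamics
\begin{align*}
\frac{\mathrm{d} \tilde{s}^g}{\mathrm{d} t} = (I - P_c)\,w^g(t) - cL\,\tilde{s}^g(t),
\end{align*}
in which the coupling contributes the damping $-cL\tilde s^g$ acting precisely on the subspace orthogonal to consensus.

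Next I would introduce the Lyapunov-type function $V(t) := \tfrac12 \vnorm{\tilde{s}^g(t)}_2^2$ and differentiate along trajectories. Because $\tilde{s}^g$ lies in the range of $I - P_c$ (so $\tilde s^g \perp \bm 1$), the Rayleigh-quotient characterization of $\lambda_2(L)$ gives the damping lower bound $(\tilde{s}^g)^\top L \tilde{s}^g \ge \lambda_2(L)\vnorm{\tilde{s}^g}_2^2$. For the forcing term I would use Cauchy--Schwarz together with the nonexpansiveness of the orthogonal projection $I - P_c$, so that $(\tilde{s}^g)^\top (I - P_c)w^g \le \vnorm{\tilde{s}^g}_2\vnorm{w^g}_2 \le \vnorm{\tilde{s}^g}_2\, Z_m^g$; here the boundedness hypothesis is exactly what guarantees that $Z_m^g = \max_{t\ge 0}\vnorm{w^g(t)}_2 < \infty$. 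Combining these, with $y(t) := \vnorm{\tilde s^g(t)}_2$, produces the scalar differential inequality $\dot y(t) \le -c\,\lambda_2(L)\,y(t) + Z_m^g$ wherever $y>0$.

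Finally I would invoke the comparison lemma: solutions of $\dot y = -c\lambda_2(L)\,y + Z_m^g$ decay toward the fixed point $Z_m^g/(c\lambda_2(L))$, so $\limsup_{t\to\infty} y(t) \le Z_m^g/(c\lambda_2(L))$, which gives the asserted asymptotic control of the deviation. The main obstacle I anticipate is not any single estimate but assembling the spectral reduction correctly: one must verify that the consensus projection annihilates the Laplacian term cleanly, so that the deviation dynamics genuinely decouples from the mean field, and that $\lambda_2(L)>0$, i.e.\ that the cellular network is connected, so the damping is strictly active on the deviation subspace. A secondary subtlety is the nonsmoothness of $t\mapsto\vnorm{\tilde s^g}_2$ at zeros of $\tilde s^g$; this is most cleanly handled by carrying out the comparison argument at the level of $V=\tfrac12\vnorm{\tilde s^g}_2^2$ (bounding the cross term by Young's inequality to obtain $\dot V \le -c\lambda_2(L)V + \tfrac{(Z_m^g)^2}{2c\lambda_2(L)}$) and only passing back to the norm at the end.
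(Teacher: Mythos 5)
Your proposal follows essentially the same route as the paper: the same decomposition $\tilde s^g=(I-P_c)s^g$, the same observation that the projection annihilates the mean-field part of the Laplacian term so that $Ls^g=L\tilde s^g$, the same Lyapunov function $\tfrac12\vnorm{\tilde s^g}_2^2$ with the Rayleigh-quotient bound $(\tilde s^g)^\top L\tilde s^g\ge\lambda_2(L)\vnorm{\tilde s^g}_2^2$ on the subspace orthogonal to $\bm{1}$, and the same Young's-inequality-plus-comparison closing step. One substantive remark: your bookkeeping gives $\dot V\le -c\lambda_2(L)V+\frac{(Z_m^g)^2}{2c\lambda_2(L)}$ and hence $\limsup_{t\to\infty}\vnorm{\tilde s^g}_2^2\le\bigl(Z_m^g/(c\lambda_2(L))\bigr)^2$, whereas the paper reaches the stated bound $Z_m^g/(c\lambda_2(L))$ only by writing $\frac{1}{4\epsilon}\vnorm{z^g}_2$ where Young's inequality actually produces $\frac{1}{4\epsilon}\vnorm{z^g}_2^2$; your constant is the dimensionally consistent one, and since the two bounds do not imply each other in general, this is a discrepancy in the theorem's stated constant rather than a gap in your argument.
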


Theorem~\ref{thm.consensus} only assumes uniform boundedness of $u^g$ and $s^g$. When the stability conditions of Theorem~\ref{theorem.stablity-multi} are met, we automatically have uniform boundedness as well:

\begin{corollary}
Under conditions in Theorem \ref{theorem.stablity-multi}, the fluctuation component is bounded as
\begin{align}
\limsup_{t \to \infty} \vnorm{\tilde{s}^g}_2^2 \leq \frac{1}{c \lambda_2(L)}Z_m^g.
\end{align}
\end{corollary}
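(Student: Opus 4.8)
The plan is to observe that the corollary follows from Theorem~\ref{thm.consensus} once its single hypothesis---uniform boundedness of $u^g(t)$ and $s^g(t)$---is verified, and that the stability conditions of Theorem~\ref{theorem.stablity-multi} supply exactly this for free. So the corollary is really a bookkeeping statement: it records that the abstract boundedness assumption of the consensus bound is automatically discharged whenever the global stability hypotheses hold.

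First I would recall that, under the conditions of Theorem~\ref{theorem.stablity-multi}, the spatially-coupled dynamics~\eqref{eq.multinetwork.dynamics} possesses a unique equilibrium $(u^*,s^*)$ that is globally asymptotically stable. Combined with Lemma~\ref{lemma.essentially.nonnegative}, which confines every trajectory launched from a nonnegative initial condition to the nonnegative orthant, global asymptotic stability guarantees that the full-state trajectory $(u(t),s(t))$ converges to $(u^*,s^*)$ as $t \to \infty$.

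Second, I would upgrade this convergence to uniform boundedness over $[0,\infty)$. A continuous trajectory that converges to a finite limit is automatically bounded: choose $T$ so large that $\vnorm{(u(t),s(t)) - (u^*,s^*)}_2 \le 1$ for all $t \ge T$, which furnishes a uniform bound on the tail $[T,\infty)$, while on the compact interval $[0,T]$ continuity of the trajectory supplies a bound. Taking the maximum of the two constants yields $\sup_{t \ge 0}\vnorm{u(t)}_2 < \infty$ and $\sup_{t \ge 0}\vnorm{s(t)}_2 < \infty$. Since for each gene $g$ the vectors $u^g(t)$ and $s^g(t)$ are sub-blocks of the full state, I immediately obtain $\sup_{t\ge 0}\vnorm{u^g(t)}_2 < \infty$ and $\sup_{t\ge 0}\vnorm{s^g(t)}_2 < \infty$ for every $g$, which is precisely the hypothesis of Theorem~\ref{thm.consensus}.

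Finally, I would check that $Z_m^g$ is finite: because $B^g$ and $\Gamma^g$ are fixed diagonal matrices and $t \mapsto B^g u^g(t) - \Gamma^g s^g(t)$ is continuous and bounded by the previous step, its supremum over $t \ge 0$ is finite, so $0 < Z_m^g < \infty$. With the boundedness hypothesis verified and $Z_m^g$ finite, applying Theorem~\ref{thm.consensus} verbatim delivers the claimed bound. There is no genuine obstacle here, as the analytic content is entirely carried by Theorem~\ref{thm.consensus}; the only step demanding any care is the elementary ``convergence implies uniform (not merely eventual) boundedness'' argument, which must account for the transient interval $[0,T]$ as well as the asymptotic tail.
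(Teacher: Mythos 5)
Your proposal is correct and follows exactly the route the paper intends: the corollary is stated as an immediate consequence of Theorem~\ref{thm.consensus} once the stability conditions of Theorem~\ref{theorem.stablity-multi} are used to discharge the uniform-boundedness hypothesis, and the paper offers no further proof beyond that one-line remark. The only stylistic difference is that your ``convergence plus continuity on a compact transient interval'' argument could be replaced by the slightly more direct observation that $\dot{V}<0$ forces $V(u(t),s(t))\le V(u(0),s(0))$ for all $t$, so the trajectory stays in a bounded sublevel set of the radially unbounded $V$; both arguments are valid.
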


Theorem~\ref{thm.consensus} indicates that the deviation becomes smaller when the cellular network is strongly connected, which corresponds to the case when $\lambda_2(L)$ is large. Traditional models of cell-to-cell communication networks encode local spatial contacts or signaling relationships~\cite{armingol2020deciphering, cang2023screening} using lattice-like graphs which may exhibit poor global connectivity, long diffusion times, and may be susceptible to localized perturbations. From a systems biology point of view, it is reasonable to consider expander-like graph connectivity patterns, as they can capture key qualitative features of biological signaling networks such as fast propagation, robustness to cell loss, and resilience to communication bottlenecks~\cite{hoory2006expander}. Furthermore, expander graphs are sparse yet highly connected structures that exhibit large spectral gaps and rapid information mixing~\cite{hoory2006expander}, so that signals originating from a small subset of cells can quickly and robustly influence the global cellular population, even in the presence of noise and stochastic failures. For $d$-regular expander graphs, the Alon--Boppana bound and the existence of Ramanujan graphs~\cite{alon1986eigenvalues, nils1994survey} establish that
\begin{align*}
\lambda_2(L) \ge d - 2\sqrt{d-1}.
\end{align*}
Using $d=12$, which would model cells as perfect spheres and enforce the optimal kissing-number constraint~\cite{conway1999sphere} for $3$-dimensional spaces, we have $\lambda_2(L) \ge 12 - 2\sqrt{11} \approx 5.36$, reflecting a strong form of algebraic connectivity that remains constant regardless of the size of the cell population.

\section{Intervention as Minimum-Time Optimal Control Problem} 
\label{sec:intervention}
\subsection{Controlled GRN-Driven RNA Velocity.}
\label{sec.controlled.GRN}
 
We next study how a targeted (drug) intervention alters the dynamics of the system. In particular, we investigate direct and indirect target interventions. To this end, we formulate the drug intervention problem as a time-optimal control problem.

We consider the setup where we can control individual genes by directly modulating their expression levels and thus affecting their ability to influence other genes. We further assume that only the positive-feedback genes are subject to control.  Let $z^q(t)$ be the control applied to gene $q$ that takes values in a given interval $\Uset := [\underline{\zeta},\overline{\zeta}]$ where $0\leq \underline{\zeta} \leq \overline{\zeta}$. We then have the following controlled-network RNA velocity dynamics described by equations
\begin{align}
\begin{aligned}
\frac{d u^g}{\mathrm{d} t} = \alpha^g \ 
\frac{\kappa
+\sum_{p \neq q}^{n_g}  W_{gp}^{+} s^p(t)
+ z^q(t) W_{gq}^{+} s^q(t)}
{\kappa+ \sum_{p=1}^{n_g} W_{gp}^{-} s^p(t)}- \beta^g u^g(t),\quad
\frac{d s^g}{\mathrm{d} t} = \beta^g u^g(t) - \gamma^g s^g(t), 
\end{aligned}
\label{eq.GRN.controlled}
\end{align}
where $W^{+}_{gg} > 0$.
Let $D_g(s) = \kappa+ [W^{-} s]_g$ and define
\begin{align}
R_g^\circ(z^q,s):= \frac{\kappa+ \sum_{p \neq q}^{n_g}  W_{gp}^{+} s^p(t)
+ z^q(t) W_{gq}^{+} s^q(t)}{D_g(s)},
\quad g = 1,\cdots, n_g.
\label{eq.R.controlled}
\end{align}
Also, let $R^\circ(z^q,s):=[R_1^\circ(z^q,s),\cdots, R_{n_g}^\circ(z^q,s)]^\top$, and use  $\Iset$ to denote the set of target genes among $1,\cdots, n_g$. The goal is to design the controller $z^q(t)$ such that $s^r$ for $r \in \Iset$ are driven to the desired value $s_{\text{target}}^r$ for all $r \in \Iset$ as fast as possible. Hence, we need to solve the following \textit{minimum-time optimal control problem}:
\begin{align}
\begin{aligned}
\min_{z^q}& \int_{0}^{T} 1 \ {\mathrm{d} t} \\
\text{subject to } \ 
& \frac{\mathrm{d} u}{\mathrm{d} t} = \alpha R^\circ(z^q,s) - \beta u, \quad 
\frac{\mathrm{d} s}{\mathrm{d} t} = \beta u - \gamma s,\\
& u(0) = u_0, \quad s(0) = s_0, \\
& s^r(T) = s_{\text{target}}^r, \quad r \in \Iset, \\
& z^q(t) \in \Uset, \quad \forall t \in [0,T].
\end{aligned}
\label{eq.GRN.time-optimal-control}
\end{align}

This problem is a special case of the fixed-endpoint control problem. It can be addressed using the Pontryagin maximum principle (PMP)~\cite{vinter2010optimal} that characterizes the optimal controller $z^q_\star$. 
Let $\lambda_u^g, \lambda_s^g$ be the costates and $\lambda_u, \lambda_s$ denote the concatenated costate vector, for which the Hamiltonian $H \left( u, s, \lambda_u, \lambda_s, z^q \right)$ equals
\begin{align}
H \left(u, s, \lambda_u, \lambda_s, z^q \right) = 1 + \sum_{g}^{n_g} \lambda_u^g\left(\alpha^g R_g^\circ(z^q,s) - \beta^g u^g \right) + \sum_{g}^{n_g} \lambda_s^g \left(\beta^g u^g - \gamma^g s^g \right).
\label{eq.GRN.H}
\end{align}
The costate dynamics are specified by
\begin{align}
\begin{aligned}
\frac{\mathrm{d} \lambda_u^g}{\mathrm{d} t} =& - \frac{\partial H}{\partial u^g}
=  \beta^g \lambda_u^g - \beta^g \lambda_s^g, \\
\frac{\mathrm{d} \lambda_s^g}{\mathrm{d} t} =& - \frac{\partial H}{ \partial s^g}
= - \lambda_u^g \alpha^g \frac{\partial R_g^\circ(z^q,s)}{\partial s^g} 
- \sum_{r \neq g} {\lambda_u}^r \alpha^r \frac{\partial R_r^\circ(z^q,s)}{\partial s^g} 
+ \lambda_u^g \gamma^g \lambda_s^g,
\end{aligned}
\label{eq.GRN.costate}
\end{align}
for all $g = 1,\cdots, n_g$.
In the above equation, in order to compute $\frac{\partial R^\circ_r}{\partial s^g}$, denote the numerator and denominator in $R^\circ_r$ by
$N^r:=\kappa+ \sum_{p \neq q}^{n_g}  W_{rp}^{+} s^p(t)
+ z^q(t) W_{rq}^{+} s^q(t) $, and $D^r:= \kappa+ \sum_{p=1}^{n_g} W_{rp}^{-} s^p,$ respectively.
We have
\begin{align}
\begin{aligned}
\frac{\partial R^\circ_r}{\partial s^g}
=\begin{cases}
\frac{z^q W_{rq}^{+} D^r -  N^r W_{rq}^{-} }{(D^r)^2}, 
& \text{if } g = q, \\
\frac{W_{rg}^{+} D^r -  N^r W_{rg}^{-} }{(D^r)^2},
& \text{if } g \neq q.
\end{cases}
\end{aligned}
\end{align}

Since \eqref{eq.GRN.time-optimal-control} imposes the constraint that $s^r$ reaches a specific state at the final time $T$, the boundary conditions for the costates are
\begin{align}
\begin{aligned}
&\lambda_u^g(T)=0, \quad g = 1,\cdots, n_g \\
&\lambda_s^g(T)=0, \quad g \notin \Iset, \\
&\lambda_s^r(T) \ \text{free},  \quad r \in \Iset.
\end{aligned}    
\label{eq.GRN.boundary}
\end{align}

In addition, because the terminal time $T$ is free, the transversality condition from the optimal control theory implies that, for all $t$,
\begin{align}
H \left(u_\star(t), s_\star(t), \lambda_{u,\star}(t), \lambda_{s,\star}(t), z^q_\star(t) \right) = 0.
\end{align}

Observe that in the Hamiltonian~\eqref{eq.GRN.H}, the term that depends on $z^q$ equals 
\begin{align}
\begin{aligned}
\sum_{g}^{n_g} \lambda_u^g \alpha^g \frac{\kappa+ z^q (t) W_{gq}^{+} s^q(t)}{D_g(s)} 
=& \sum_{g}^{n_g} \lambda_u^g \alpha^g \frac{\kappa}{D_g(s)} 
+ z^q(t) s^q(t)  \sum_{g}^{n_g} \lambda_u^g \alpha^g \frac{W_{gq}^{+}}{D_g(s)}.
\end{aligned}
\end{align}
Since $s^q (t)$ is nonnegative, let  $\Psi(t,{\lambda_u}) = \sum_{g}^{n_g} \lambda_u^g \alpha^g  \frac{W_{gq}^{+}}{D_g(s)}$ which is a weighted sum of the costates $\lambda_u^g$ for $g = 1,\cdots,n_g$. Then, a necessary condition for the optimal controller $z^q_\star(t)$ is that it is a bang-bang controller of the form:
\begin{align}
\begin{aligned}
z^q_\star(t)
=& \argmin_{z^q} H\left(u, s, \lambda_u, \lambda_s, z^q \right) \\
=& \argmin_{z^q} \ z^q (t) \Psi(t,{\lambda_u}) \\
=& \begin{cases} 
\overline{\zeta} & \text{if} \ \Psi(t,{\lambda_u}) <0, \quad \text{and} \quad s^q(t) \neq 0, \\ 
\underline{\zeta} & \text{if} \ \Psi(t,{\lambda_u}) >0, \quad \text{and} \quad s^q(t) \neq 0,\\ 
\text{Undecided} & \text{if}  \ \Psi(t,{\lambda_u}) = 0, \quad \text{or} \quad s^q(t) = 0.
\end{cases}
\end{aligned}
\label{eq.GRN.switch}
\end{align}

%------------------------
Since PMP provides only a necessary condition for optimality, it does not by itself determine whether the target state $s_f$ is reachable from an initial state $s_0$. To address reachability, we compute the Lie bracket (which indicates what types of dynamic changes are possible)~\cite{vinter2010optimal} for the controlled system~\eqref{eq.GRN.controlled}, and show it can be connected to the previously discussed molecular distance defined in~\cite{kang2020graph}. 

For clarity, we focus on the case where the target is a single gene $r$. Let $x = [u^\top, s^\top]^\top \in \Rset^{2n_g}$, and note that the relevant nonlinear dynamics can be written in a control-affine form,
\begin{align}
\frac{d x}{\mathrm{d} t} = f(x) + G(x) z^q(t),
\end{align}
where 
$f(x) = \begin{bmatrix} f_u(x) \\ f_s(x)\end{bmatrix}$ 
is the drift vector field with 
\begin{align*}
f_u^g (x)= \alpha^g \ 
\frac{\kappa
+\sum_{p \neq q}^{n_g}  W_{gp}^{+} s^p(t)}
{D_g(s)}- \beta^g u^g(t),
\end{align*}
and 
\begin{align*}
f_s^g(x) =  \beta^g u^g(t) - \gamma^g s^g(t),
\end{align*}
for each $g = 1,\cdots, n_g$. The control vector field $G(x) = \begin{bmatrix} G_u(x) \\ G_s(x)\end{bmatrix}$ has a nonzero entry for each index $g$ where $W_{gq}^{+} \neq 0,$  and the actual values equal $[G_u(x)]_g = \frac{\alpha^g W_{gq}^{+} s^q(t)}{D_g(s)}.$ Furthermore, $G_s \equiv 0$.

To examine how an intervention on gene $q$ propagates from $u$ to $s$ and over the GRN as a whole, we leverage the molecular graph introduced in~\cite[Definition 4]{kang2020graph} whose nodes are either $u^g$ or $s^g$ for all $g = 1,\cdots n_g$. 
Computing successive Lie brackets reveals how the control input propagates through the GRN. Since $G_s \equiv 0$, the first-order bracket is given by 
\begin{align}
\begin{aligned}
\left[f,G\right]= D G(x) f(x) - Df(x) G(x) 
= \begin{bmatrix} \frac{\partial G_u}{\partial s} \cdot f_s(x) \\ 0 \end{bmatrix}
+ \begin{bmatrix} - \frac{\partial f_u}{\partial u} \cdot G_u(x) \\ - \frac{\partial f_s}{\partial u} \cdot G_u(x) \end{bmatrix},
\end{aligned}
\end{align}
where $D$ denotes the Jacobian. Here, for the $u$-component in $\left[f,G\right]$ at index $h_0 = 1,\cdots,n_g$, we have
\begin{align}
\begin{aligned}
\left[f,G\right]_{u^{h_0}}
=& \sum_{k}^{n_g} \frac{\partial {[G_u]}_{h_0}}{\partial s^k}  f_s^k(x)  - \sum_{k}^{n_g} \frac{\partial f_u^{h_0}}{\partial u^k} \left[G_u(x)\right]_k \\
=& \sum_{k}^{n_g} \frac{\partial {\left[G_u\right]}_{h_0}}{\partial s^k}  f_s^k(x)  +  \sum_{k}^{n_g} \beta^{h_0} \delta_{{h_0} k} \left[G_u(x)\right]_k \\
=& \sum_{k}^{n_g} \frac{\partial {\left[G_u\right]}_{h_0}}{\partial s^k}  f_s^k(x)  +  \beta^h  \left[G_u(x)\right]_{h_0}. 
\end{aligned}
\end{align}
The $s$-component of $\left[f,G\right]$ at index $h_0 = 1,\cdots,n_g$  is given by
\begin{align}
\begin{aligned}
\left[f,G\right]_{{s}^{h_0}}= - \frac{\partial f_s}{\partial u} \cdot G_u(x)  
= - \sum_{k}^{n_g} \frac{\partial f_s^{h_0}}{\partial u^k}  \left[G_u(x)\right]_k 
= - \beta^{h_0} \left[G_u(x)\right]_{h_0}.
\end{aligned}
\end{align}
Hence, the first Lie bracket $\left[f,G\right]$ introduces nonzero $s$-entry at each gene $h_0$ with $W_{{h_0} q}^{+} \neq 0$, capturing the immediate effect of the control $z^q$ applied to $u^q$ on downstream spliced states $s^{h_0}$.

Next, let $v_1:=\left[f,G\right]$, and compute the second-order Lie bracket $v_2:=\left[f, v_1 \right]= D v_1(x) f(x) - Df(x) v_1(x)$.
Since we are interested in how an intervention $z^q(t)$ at gene $q$ propagates, and we have already computed the one-step propagation from $u^{h_0}$ to $s^{h_0}$ with $h_0$ such that $W_{{h_0}q}^{+} \neq 0$, we only need to focus on the effect of $s^{h_0}$ on $u^{h_1}$ within the two-step neighborhood of $u^q$. In other words, we focus on the $u$-component of $\left[f, v_1 \right]$. For ${h_1} = 1,\cdots, n_g$, we have
\begin{align}
\begin{aligned}
\left[v_2\right]_{{u}^{h_1}}
=& \sum_{k}^{n_g} \frac{\partial {\left[v_{1}\right]}_{u^{h_1}}}{\partial u^k} f_s^k  
+ \sum_{k}^{n_g} \frac{\partial {\left[v_{1}\right]}_{u^{h_1}}}{\partial s^k} f_s^k  
- \sum_{k}^{n_g} \frac{\partial f_u^{h_1}}{\partial u^k}   {\left[v_{1}\right]_{u^k}}
- \sum_{k}^{n_g} \frac{\partial f_u^{h_1}}{\partial s^k}  {\left[v_{1}\right]_{s^k}} \\
=& \underbrace{ \sum_{k}^{n_g} \frac{\partial {\left[v_{1}\right]}_{u^{h_1}}}{\partial u^k} f_s^k 
+ \sum_{k}^{n_g} \frac{\partial {\left[v_{1}\right]}_{u^{h_1}}}{\partial s^k} f_s^k
+ \beta^{h_1}  
{\left[v_{1}\right]}_{u^{k}}}_{=:{Q}_{2,u}^{h_1}}
+ \sum_{k}^{n_g} B_{{h_1}k} \left( \beta^k \left[G_u(x)\right]_k \right) ,
\end{aligned}
\end{align}
where $B_{hk}:= \frac{\partial f_u^h}{\partial s^k}$ encodes the effect of $s^k$ on $f_u^h$.
The above equation implies that for any ${h_1}$ such that $B_{{h_1}k} \neq 0$ and $W_{kq}^{+} \neq 0$ and for which $u^{h_1}$ is two molecular-graph-steps away from $u^q$, the effect of $z^q$ on $u^{h_1}$ manifests itself via the last term $\sum_{k}^{n_g} B_{{h_1}k} \left( \beta^k \left[G_u(x)\right]_k \right)$. 
The $s$-component of $\left[f, v_1 \right]$ can be computed in a similar way. Since, by the structure of the dynamics, along a directed path in the molecular graph $u^q \to s^q \to u^{h_0} \to s^{h_0} \to u^{h_1} \to \cdots$, the coupling between $u$ and $s$ alternates at each bracket order.
To avoid redundancy, we therefore skip the explicit computation of the $s$-components for even orders and of the $u$-components for odd orders.

The $s$-component of the third-order Lie bracket $v_3:=\left[f, v_2 \right]= D v_2(x) f(x) - Df(x) v_2(x)$ is computed as
\begin{align}
\begin{aligned}
\left[v_3\right]_{s}^{h_2}
=& \sum_{k}^{n_g} \frac{\partial {\left[v_{2,s}\right]}^{h_2}}{\partial u^k} f_u^k  
+ \sum_{k}^{n_g} \frac{\partial {\left[v_{2,s}\right]}^{h_2}}{\partial s^k} f_s^k  
- \sum_{k}^{n_g} \frac{\partial f_s^{h_2}}{\partial u^k}   {\left[v_{2,u}\right]}^{k}
- \sum_{k}^{n_g} \frac{\partial f_s^{h_2}}{\partial s^k}  {\left[v_{2,s}\right]}^{k},
\end{aligned}
\end{align}
where the effect of $G_u$ enters through the third term as
\begin{align}
\begin{aligned}
\sum_{k}^{n_g} \frac{\partial f_s^{h_2}}{\partial u^k}   {\left[v_{2,u}\right]}^{k}
&= \sum_{k}^{n_g} \beta^{h_2} \delta_{h_2 k} {\left[v_{2,u}\right]}^{k} \\
&= \sum_{k}^{n_g} \beta^{h_2} \delta_{h_2 k} 
\left( {Q}_{2,u}^{k} + \sum_{j}^{n_g} B_{k j}  \beta^j \left[G_u(x)\right]_j \right) \\
&= \beta^{h_2}  {Q}_{2,u}^{k} 
+ \beta^{h_2} \sum_{j}^{n_g} B_{k j}  \beta^j \left[G_u(x)\right]_j.
\end{aligned}
\end{align}
Thus, we have 
$\left[v_3\right]_{s}^{h_2}
= {Q}_{3,s}^{h_2}
- \beta^{h_2} \sum_{j}^{n_g} B_{k j}  \beta^j \left[G_u(x)\right]_j$,
where ${Q}_{3,s}^{h_2}:=\sum_{k}^{n_g} \frac{\partial {\left[v_{2,s}\right]}^{h_2}}{\partial u^k} f_u^k  
+ \sum_{k}^{n_g} \frac{\partial {\left[v_{2,s}\right]}^{h_2}}{\partial s^k} f_s^k  
- \sum_{k}^{n_g} \frac{\partial f_s^{h_2}}{\partial u^k}   {\left[v_{2,u}\right]}^{k}
-  \beta^{h_2}  {Q}_{2,u}^{k}$.

We can repeat the calculations for the fourth-order Lie bracket, $v_4:=\left[f, v_3 \right]= D v_3(x) f(x) - Df(x) v_3(x)$, and determine its $u$-component according to
\begin{align}
\begin{aligned}
\left[v_4\right]_{u}^{h_3}
&= \sum_{k}^{n_g} \frac{\partial {\left[v_{3,u}\right]}^{h_3}}{\partial u^k} f_s^k  
+ \sum_{k}^{n_g} \frac{\partial {\left[v_{3,u}\right]}^{h_3}}{\partial s^k} f_s^k  
- \sum_{k}^{n_g} \frac{\partial f_u^{h_3}}{\partial u^k}   {\left[v_{3,u}\right]}^{k}
- \sum_{k}^{n_g} \frac{\partial f_u^{h_3}}{\partial s^k}  {\left[v_{3,s}\right]}^{k} \\
&= \sum_{k}^{n_g} \frac{\partial {\left[v_{3,u}\right]}^{h_3}}{\partial u^k} f_s^k 
+ \sum_{k}^{n_g} \frac{\partial {\left[v_{3,u}\right]}^{h_3}}{\partial s^k} f_s^k
+ \beta^{h_3}  
{\left[v_{3,u}\right]}^{k} \\
&\quad - \sum_{k}^{n_g} B_{{h_3}k} \left({Q}_{3,s}^{k}
- \beta^{h_2} \sum_{j}^{n_g} B_{k j}  \beta^j \left[G_u(x)\right]_j\right) \\
&= \underbrace{ \sum_{k}^{n_g} \frac{\partial {\left[v_{3,u}\right]}^{h_3}}{\partial u^k} f_s^k 
+ \sum_{k}^{n_g} \frac{\partial {\left[v_{3,u}\right]}^{h_3}}{\partial s^k} f_s^k
+ \beta^{h_3}  
{\left[v_{3,u}\right]}^{k}
- \sum_{k}^{n_g} B_{{h_3}k} {Q}_{3,s}^{k}
}_{:={Q}_{4,u}^{h_3}} \\
& \quad +  \sum_{k}^{n_g} B_{{h_3}k}  \beta^{k} \left(\sum_{j}^{n_g} B_{k j}  \beta^j \left[G_u(x)\right]_j\right).
\end{aligned}
\end{align}
Here, the last term $\sum_{k}^{n_g} B_{{h_3}k}  \beta^{k} \left(\sum_{j}^{n_g} B_{k j}  \beta^j \left[G_u(x)\right]_j\right)$ captures the effect of the control on nodes that are four molecular distance away.

This iterative structure shows that the earliest Lie bracket in which the control appears corresponds to the molecular distance between gene $q$ and $r$. Consequently, the existence of a directed path in the molecular graph is necessary for the reachability of $s^r_{\text{target}}$ in the controlled GRN.

%%%%%%%%%%%%%%%%%%%%%%%%%%%%%%%%%%

\subsection{Controlled Spatially-Coupled GRN-Driven RNA Velocity}
We conclude our analysis by considering drug intervention for spatially coupled intercellular GRN networks. The controlled dynamics are
\begin{align}
\begin{aligned}
\frac{\mathrm{d} u_i^g}{\mathrm{d} t} &= \alpha_i^g \ \frac{\kappa
+ \sum_{p \neq q}^{n_g} W_{gp}^{+} s_i^p(t)
+\left[\delta_i z^q(t)+  \left( 1 - \delta_i \right)  \right] W_{gq}^{+} s_i^q(t)}{\kappa+ \sum_{p=1}^{n_g} W_{gp}^{-} s_i^p(t)}- \beta_i^g u_i^g(t),\\
\frac{\mathrm{d} s_i^g}{\mathrm{d} t} &= \beta_i^g u_i^g(t) 
-\gamma_i^g s^g_i(t) 
+ c \sum_{j=1}^{n_c} A_{ij}\left( s^g_j(t) - s^g_i(t) \right),
\end{aligned}
\label{eq.multinetwork.controlled}
\end{align}
for all cells $i \in \{1,\dots,n_c\}$ and genes $g \in \{1,\dots,n_g\}$, where each $z^q(t)$ denotes the control input that targets gene $q$. The binary random variables $\delta_i \in\{0,1\}$ for $i = 1,\cdots,n_c$ specify whether cell $i$ is affected by the drug: when $\delta_i=1$, cell $i$ is affected by $z^q(t)$, and when $\delta_i=0$, cell $i$ is not affected by the drug and evloves according to its nominal dynamics. In other words, the drug does not necessarily influence every cell in the population, and $\delta_i$ encodes such spatial heterogeneity. 

Define 
\begin{align}
\overline{R}_i^g: = \frac{\kappa+ \sum_{p \neq q}^{n_g} W_{gp}^{+} s_i^p(t) + \left[\delta_i z^q(t)+  \left( 1 - \delta_i \right)  \right]  W_{gq}^{+} s_i^q(t)}{\kappa+ \sum_{p=1}^{n_g} W_{gp}^{-} s_i^p}.
\end{align}

Similar to the previous section, we can formulate the minimum-time optimal control problem to solve for $z^q_\star (t)$ as follows:
\begin{align}
\begin{aligned}
\min_{z^q}& \int_{0}^{T} 1 \ {\mathrm{d} t} \\
\text{subject to } \ & \frac{\mathrm{d} {u}_i^g}{\mathrm{d} t} = \alpha_i^g \overline{R}_i^g - \beta_i^g u_i^g(t), \\
&\frac{\mathrm{d} s_i^g}{\mathrm{d} t} = \beta_i^g u_i^g(t) 
-\gamma_i^g s^g_i(t) 
+ c \sum_{j=1}^{n_c} A_{ij}\left( s^g_j(t) - s^g_i(t) \right),\\
& u_i^g(0) = u_{i,0}^g, \quad s_i^g(0) = s_{i,0}^g, \quad, \forall i = 1,\cdots,n_c, \quad \forall g = 1,\cdots, n_g, \\
& s_j^r(T) = s_{j,\text{target}}^r, \quad r \in \Iset, \quad j \in \Jset, \\
& z^q(t) \in \Uset, \quad \forall t \in [0,T],
\end{aligned}
\label{eq.intra.time-optimal-control}
\end{align}
where $s_{i,\text{target}}^r$ is the targeted final value for gene $r \in \Iset$ and cell $i \in \Jset$, and $\Uset = [\underline{\zeta},\overline{\zeta}]$.

Let ${\lambda_u}_i^g, {\lambda_s}_i^g$ be the costates.
The Hamiltonian equals
\begin{align}
\begin{aligned}
H &= 1 + \sum_i^{n_c} \sum_{g}^{n_g} {\lambda_u}_i^g \left( \alpha_i^g  \overline{R}_i^g - \beta_i^g u_i^g(t)\right) \\
&\quad +\sum_i^{n_c} \sum_{g}^{n_g} {\lambda_s}_i^g \left( \beta_i^g u_i^g(t) 
-\gamma_i^g s^g_i(t) 
+\frac{c}{n_c} \sum_{j=1}^{n_c} A_{ij}\left( s^g_j(t) - s^g_i(t) \right) \right),
\end{aligned}
\end{align}
with $H(T) = 0$.
Applying the maximum principle, we have
\begin{align}
\begin{aligned}
\frac{\mathrm{d} {\lambda}_{u_i}^g}{\mathrm{d} t} =& - \frac{\partial H}{\partial u_i^g}
=  \beta_i^g {\lambda_u}_i^g - \beta_i^g {\lambda_s}_{i}^g \\
\frac{\mathrm{d} {\lambda}_{s_i}^g}{\mathrm{d} t} =& - \frac{\partial H}{\partial s_i^g}
= -  {\lambda_u}_i^g \alpha_i^g 
\frac{\partial \overline{R}_i^g}{\partial s_i^g} 
- \sum_{r \neq g} {\lambda_u}_i^r \alpha_i^r \frac{\partial \overline{R}_i^r}{\partial s_i^g}
+  {\lambda_s}_i^g \left(1 + \frac{c}{n_c} \sum_{j=1} A_{ij}\right)
- \sum_{j\neq i} {\lambda_s}_j^g   \frac{c}{n_c}a_{ji}.
\end{aligned}
\end{align}
Let $\overline{\Psi}(t,{\lambda_u}) := \sum_i^{n_c} \sum_{g}^{n_g} {\lambda_u}_i^g  \alpha_i^g \ \frac{ \delta_i W_{gq}^{+} s_i^q(t)}{\kappa+ \sum_{p=1}^{n_g} W_{gp}^{-} s_i^p(t)}.$ Then, the optimal controller $z^q_\star$ satisfies the following bang-bang conditions:
\begin{align}
\begin{aligned}
z^q_\star(t)
= \argmin_{z^q} H 
=& \argmin_{z^g} \ z^q (t) \overline{\Psi}(t,{\lambda_u}) \\
=& \begin{cases} 
1 & \text{if} \ \overline{\Psi}(t,{\lambda_u}) <0, \\ 
0 & \text{if} \ \overline{\Psi}(t,{\lambda_u}) >0, \\ 
\text{Undecided}  & \text{if}  \ \overline{\Psi}(t,{\lambda_u}) = 0.
\end{cases}
\end{aligned}
\end{align}

\section{Numerical Experiments}

\subsection{Baseline Intervention}
\label{sec.GRN.experiments}

We begin by presenting simulation results for two examples of small GRNs. In both cases, the topology is fixed, and the transcription rate of a particular gene is set to $0$ at a specific time. The first network comprises three genes, as illustrated in Figure~\ref{fig.GRN3.graph}. 
At time $t = 2$, the transcription rate $\alpha^2$ of gene $g = 2$ is set to $0$, while the rates $\alpha^g$ for $g = 1,3$ remain the same. Figure~\ref{fig.GRN-3} compares the network behavior without intervention (top panels) with that involving intervention (bottom panels), for both $u^g$ and $s^g$. As shown in Figures~\ref{fig.GRN3.Ub},~\ref{fig.GRN3.Sb}, in the untreated case, the network converges to a baseline steady state defined by its intrinsic regulatory feedbacks. Under the intervention that forces $\alpha^2$ to $0$, fewer (un)spliced RNAs $u^1$, $s^1$ are produced, and we observe an increase in $u^1$, and $s^1$ due to the silencing of gene $2$ and its repressive effects. By contrast, as a consequence of the lack of promotive effect of gene $2$ on gene $3$, $u^3$ and $ s^3$ decrease. The trajectories also shift toward a new equilibrium. This highlights how intervention changes not only the equilibrium but also the transient behavior of the system.

\begin{figure}[ht]
\centering
\subfigure[Network topology]{\includegraphics[width=0.27\textwidth]{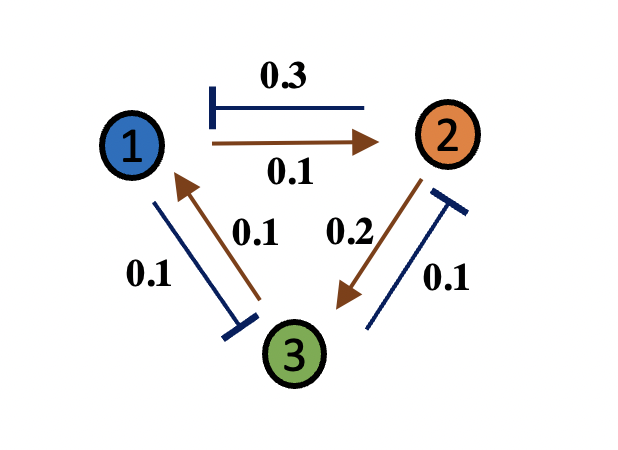}\label{fig.GRN3.graph}} 
\subfigure[Plot of $u^g(t)$ without intervention]{\includegraphics[width=0.35\textwidth]{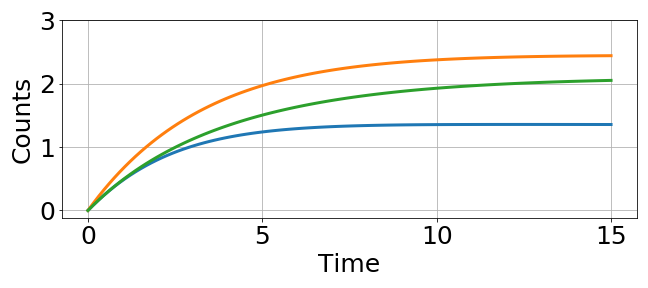}\label{fig.GRN3.Ub}} 
\subfigure[Plot of $s^g(t)$ without intervention]{\includegraphics[width=0.35\textwidth]{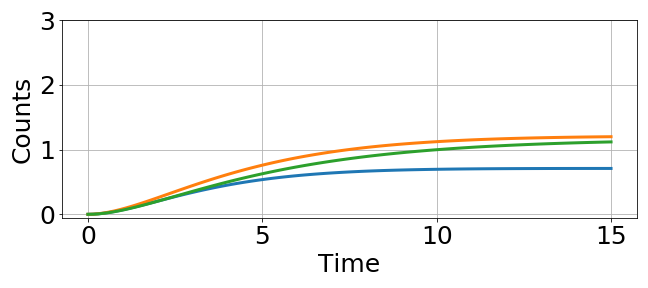}\label{fig.GRN3.Sb}}  
\subfigure[Plot of $u^g(t)$ with intervention]{\includegraphics[width=0.35\textwidth]{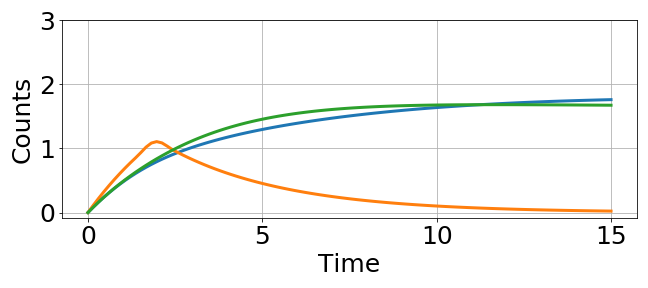}\label{fig.GRN3.U}} 
\subfigure[Plot of $s^g(t)$ with intervention]{\includegraphics[width=0.35\textwidth]{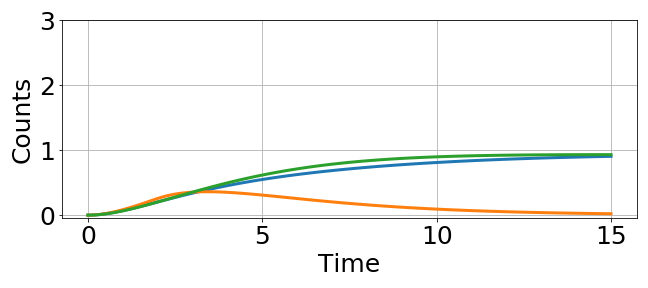}\label{fig.GRN3.S}}  \\
\caption{(a) Network topology of an example GRN. The red lines represent the effect of activators, while the blue lines represent the effect of repressors. (b)-(e)  Plots of $u^g$ and $s^g$ with and without intervention for gene 1 (\protect\solidblueline), gene 2 (\protect\solidorangeline), gene 3 (\protect\solidgreenline).}
\label{fig.GRN-3}
\end{figure}

In the second example, we consider a GRN that consists of 5 genes as shown in Figure \ref{fig.GRN5.graph}. Similar to the previous case, at time $t = 2$, the transcription rate $\alpha^1$ of gene $g = 1$ is set to $0$, while the rates $\alpha^g$ for $g = 2,3,4,5$ remain the same. With the removal of the positive regulatory effect of gene $1$ on gene $5$, we observe a decrease in $u^5$, $s^5$. On the other hand, since gene $1$ acts as an inhibitor of gene $3$, after the intervention, $u^3$ and $s^3$ increase. We also remark that this scenario reflects mixed network dynamics, in which only a subset of the genes attains the equilibrium.

\begin{figure}[ht]
\centering
\subfigure[Network topology]{\includegraphics[width=0.27\textwidth]{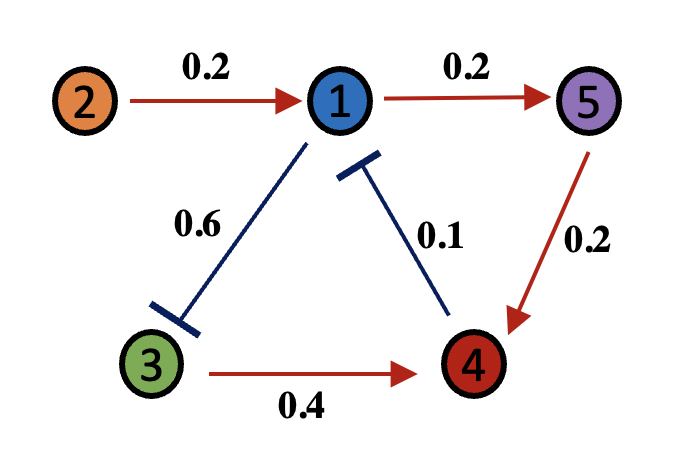}\label{fig.GRN5.graph}} 
\subfigure[Plot of $u^g(t)$ without intervention]{\includegraphics[width=0.35\textwidth]{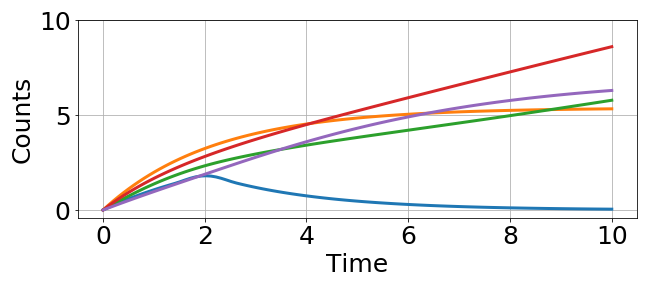}\label{fig.GRN5.U}} 
\subfigure[Plot of $s^g(t)$ without intervention]{\includegraphics[width=0.35\textwidth]{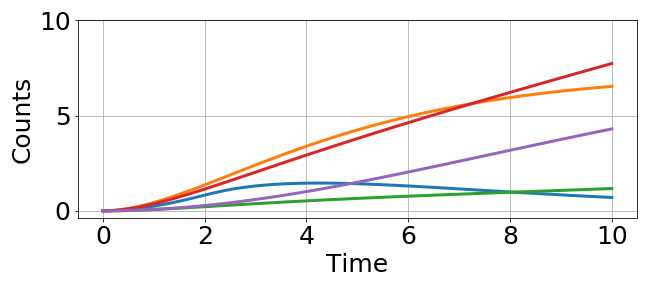}\label{fig.GRN5.S}} \\
\subfigure[Plot of $u^g(t)$ with intervention]{\includegraphics[width=0.35\textwidth]{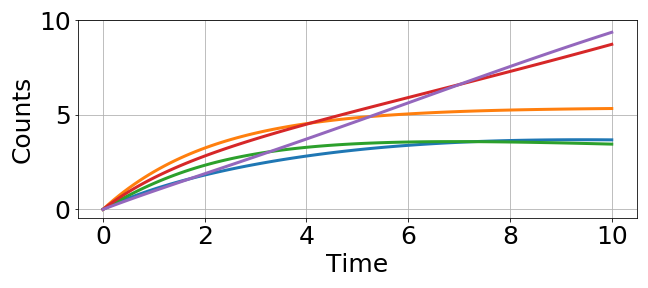}\label{fig.GRN5.Ub}} 
\subfigure[Plot of $s^g(t)$ with intervention]{\includegraphics[width=0.35\textwidth]{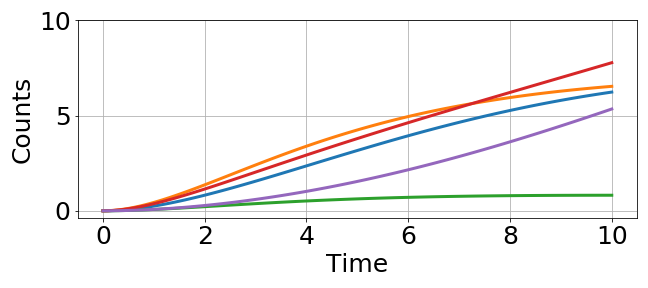}\label{fig.GRN5.Sb}} \\
\caption{(a) Network topology of an example GRN. The red lines represent the effect of the activators while the blue lines represent the effect of repressors.  (b)-(e)  Plots of $u^g$ and $s^g$ with and without intervention for gene 1 (\protect\solidblueline), gene 2 (\protect\solidorangeline), gene 3 (\protect\solidgreenline), gene 4 (\protect\solidredline), gene 5 (\protect\solidpurpleline).}
\label{fig.GRN-5}
\end{figure}

\subsection{Controlled GRN-Driven RNA Velocity.}
\label{sec.experiments.controlledGRN}

Next, following Section~\ref{sec.controlled.GRN}, we introduce control inputs that act on genes to simulate intervention. Let $z^q(t)\in [0,1]$.
To solve the time-optimal control problem~\eqref{eq.GRN.time-optimal-control}, we solve the associated two-point boundary value problem (TPBVP) numerically using the Forward-Backward-Sweep method (FBSM)~\cite{lenhart2007optimal}. Since our problem has free terminal time and partially fixed endpoints, we revise the standard FBSM that solves the basic variable-endpoint fixed time control problem as follows.
Starting with an initial guess for the optimal time $\hat{T}$, we solve the problem as a fixed-time problem. Specifically, we discretize the time interval $[0,\hat{T}]$ into equal-length bins $0 = t_0, t_1, \cdots, t_{N} = \hat{T}$. Then, we choose an initial guess for the optimal controller $z^q_0$ over the whole time interval. For the dynamics of states $u,s$, we start with the equilibrium of the uncontrolled system, i.e., we start from $u(0) = u^*$, $s(0) = s^*$, and solve this initial value problem forward in time according to~\eqref{eq.GRN.controlled}. 
Using the boundary condition ~\eqref{eq.GRN.boundary}, and values of $z^q_0, u, s$, we solve the costates dynamics backward in time according to Equation~\eqref{eq.GRN.costate}. 
To handle the costates $\lambda_s^r$ for $r \in \Iset$ whose terminal conditions are not specified, we either use an initial guess for these terminal conditions, or relax the original partially fixed-endpoint control Problem~\eqref{eq.GRN.time-optimal-control} to a variable-endpoint problem by introducing a terminal cost as a penalty function. We adopt the latter approach. Specially, we replace the constraints $s^r(T) = s_{\text{target}}^r$ for $r \in \Iset$ with a penalty function $\frac{\sigma}{2}\left\|s^r(T) - s_{\text{target}}^r \right\|_2^2$ as a terminal cost in Equation~\eqref{eq.GRN.time-optimal-control}. In this relaxed version, the PMP conditions remain the same except for the terminal conditions for the costate, which become
\begin{align}
\lambda_s^r(T) = \sigma \left(s^r(T) - s_{\text{target}}^r\right).
\end{align}
Using $z^q_0, u, s, \lambda_u,\lambda_s$, we update the controller $z^q_1$ via \eqref{eq.GRN.switch}. This process is repeated until a convergence condition is satisfied.

%%%%%%%%%%%%%%%%%%%%%%%%%%%%%%%%%%%

We next simulate the intervention problem over two GRNs and solve the time-optimal control problems. Similar to Section \ref{sec.GRN.experiments}, we perform two experiments on two GRNs that consist of $3$ and $5$ genes, respectively. In both examples, we modify the GNR graphs by including a self-loop as illustrated in  Figure \ref{fig.control3.graph} and Figure \ref{fig.control5.graph}. 
For the $3$-gene network, we control gene $2$ with the targeted gene being gene $3$ with $s^3_{\text{target}}=0.4$. For the $5$-gene newtork, we control gene $1$ and aim to target gene $5$ with $s^5_{\text{target}}=0.4$. All experiments are initialized at their respective equilibria of the uncontrolled dynamics. The solution of the time-optimal control problem indicates that we should set $z^q(t) \equiv 0$ in both cases. The optimal time to reach the targeted values is $2.32$s for the $3$-gene network and $9.38$s for the $5$-gene newtork.

\begin{figure}[ht]
\centering
\subfigure[GRN topology]{\includegraphics[width=0.3\textwidth]{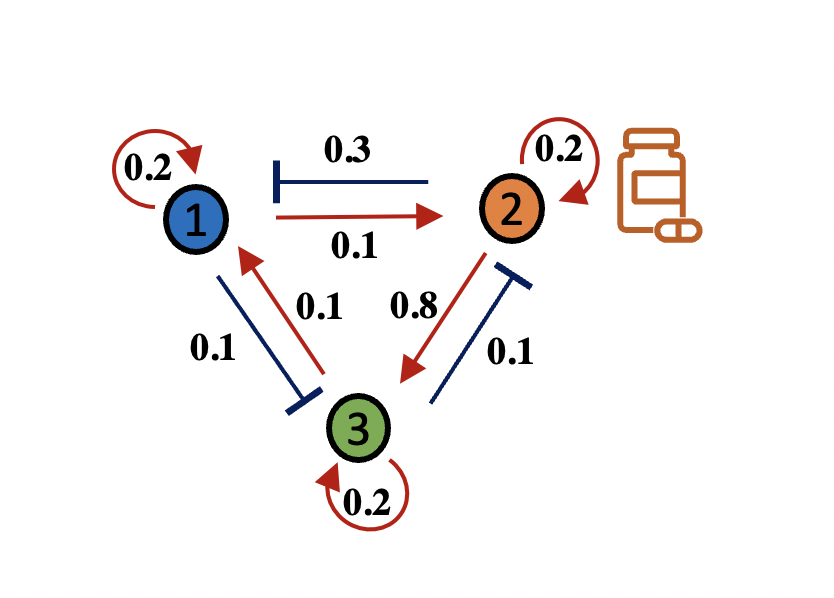}\label{fig.control3.graph}} 
\subfigure[Plot of $s^g(t)$]{\includegraphics[width=0.6\textwidth]{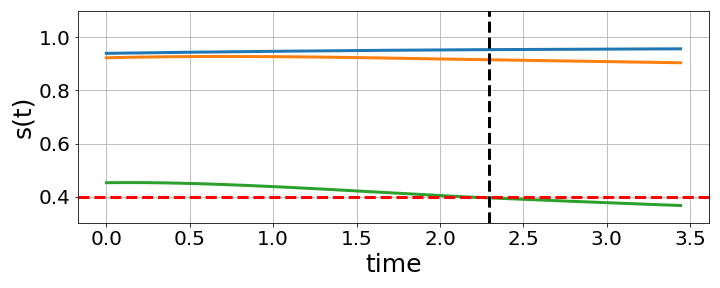}\label{fig.control3.S}} 
\caption{(a) Network topology of the GRN.  (b)  Plots of $s^g$ for gene 1 (\protect\solidblueline), gene 2 (\protect\solidorangeline), gene 3 (\protect\solidgreenline), targeted value of gene 3 (\protect\dottedredline), and the optimal time $T$ (\protect\dottedblackline)}
\label{fig.control.3}
\end{figure}

\begin{figure}[ht]
\centering
\subfigure[GRN topology]{\includegraphics[width=0.3\textwidth]{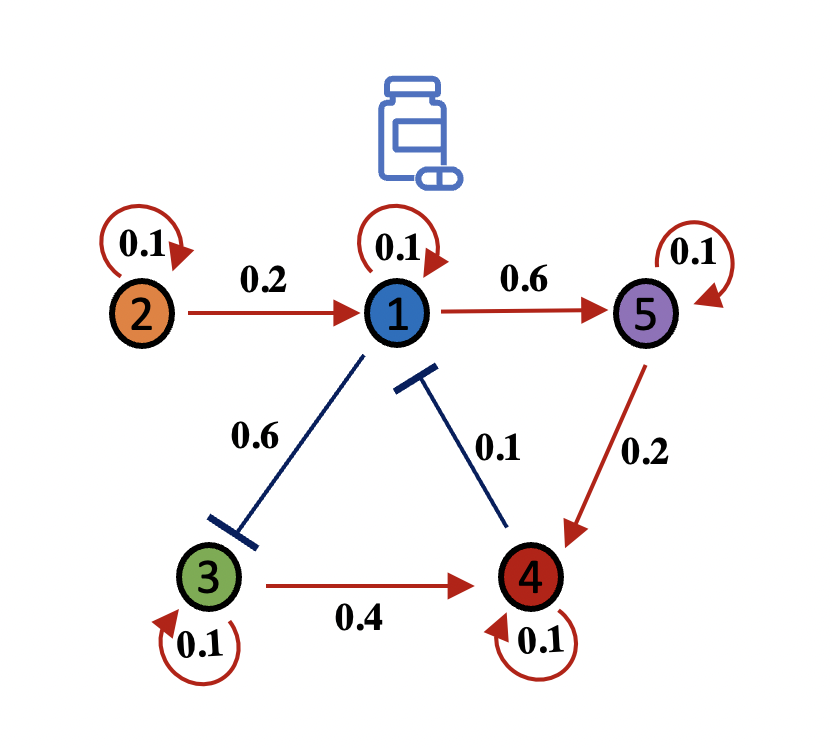}\label{fig.control5.graph}} 
\subfigure[Plot of $s^g(t)$]{\includegraphics[width=0.6\textwidth]{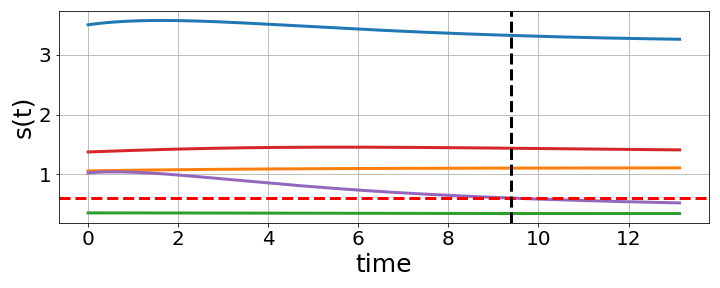}\label{fig.control5.S}}
\caption{(a) Network topology of the GRN.  (b) Plots of $s^g$ for gene 1 (\protect\solidblueline), gene 2 (\protect\solidorangeline), gene 3 (\protect\solidgreenline), gene 4 (\protect\solidredline), gene 5 (\protect\solidpurpleline), targeted value of gene 5 (\protect\dottedredline), and the optimal time $T$ (\protect\dottedblackline)}
\label{fig.control.5}
\end{figure}

\subsection{Controlled Spatially-Coupled GRN-Driven RNA Velocity}

Finally, we consider a network of 5 cells where each cell has the same GRN of 3 genes as considered in Section \ref{sec.experiments.controlledGRN}. Those $5$ cells form a complete graph, but which cell is affected by the intervention is random. In the first case, we assume all cells are affected by drug, i.e., $\delta_i = 1$ for all $i = 1,\cdots,5$. In the second case, we consider only cells $1,3,5$ are affected by the drug, i.e., $\delta_{i} = 1$ for $i=1,3,5$ and $\delta_{j} = 0$ for $j = 2,4$. All experiments are initialized at the equilibrium of the uncontrolled dynamics.
Let $z^q(t)\in [0,1]$, the time-optimal control solution shows that the optimal control is identically zero, i.e., $z^q(t) \equiv 0$, in both cases. In the first case, the optimal time $T^*$ for all cells to reach the targeted level of $s^3_{\text{target}}=0.4$ is $T^* = 2.32$s, which matches that of the single cell case as studied in Section \ref{sec.experiments.controlledGRN}. In the second case, only the cell $1,3$ reaches the targeted level $s^3_{\text{target}}=0.4$ with the $T^* = 2.80$s. For the uncontrolled cells, we witness a decrease in the level of $s^3$ due to the interconnection between cells.

\begin{figure}[ht]
\centering
\begin{minipage}{0.45\textwidth}
    \centering
    \subfigure[Cellular network]{
    \includegraphics[width=\textwidth]{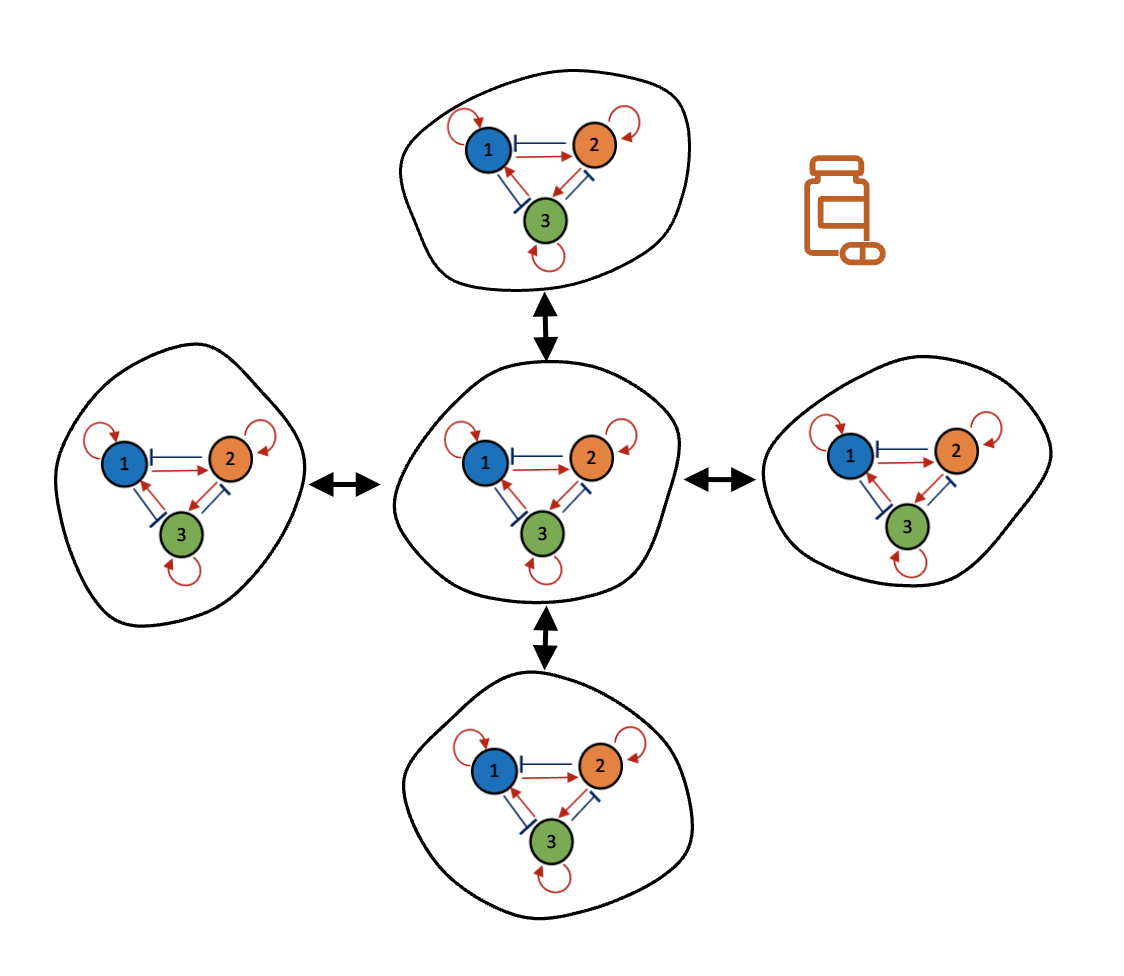}
    \label{fig.intercell.graph}}
\end{minipage}
\hfill
\begin{minipage}{0.5\textwidth}
    \centering
    \subfigure[Plot of $s^g(t)$ in cell 1 (controlled)]{
        \includegraphics[width=\textwidth]{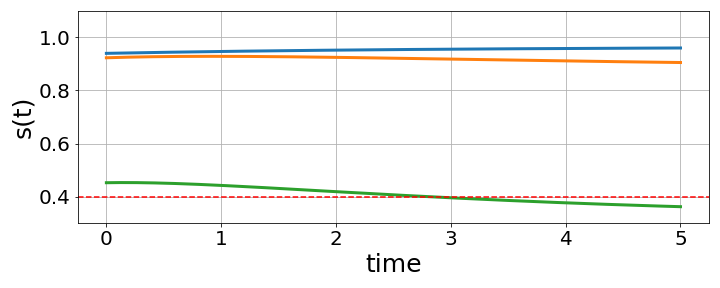}
        \label{fig.intercell.1}}
    \vspace{0.2em}
    \subfigure[Plot of $s^g(t)$ in cell 2 (uncontrolled)]{
        \includegraphics[width=\textwidth]{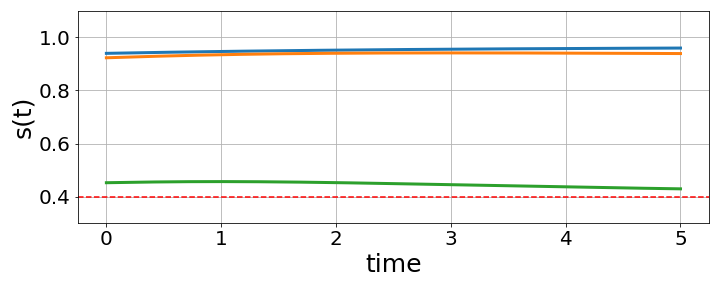}
        \label{fig.intercell.2}}
\end{minipage}
\caption{(a) A two-level network with 5 cells.  (b) (c): Plots of $s^g$ for gene 1 (\protect\solidblueline), gene 2 (\protect\solidorangeline), gene 3 (\protect\solidgreenline), and targeted value of gene 3 (\protect\dottedredline).}
\label{fig.intercellular}
\end{figure}

\section{Conclusion}
In this paper, we developed an RNA velocity framework that jointly models intracellular gene regulation and intercellular interactions. We analyzed the existence and stability of steady states in this two-level network, and established conditions for consensus across the cellular network. Building on this model, we formulated targeted drug intervention as a minimum-time optimal control problem, thus enabling principled and efficient transitions between cellular states of therapeutic interest. In future work, we will investigate learning the underlying parameters of the proposed ODE model from data by integrating multiomics datasets.

%============================
\section*{Acknowledgments and Funding Sources}
This work was supported by AbbVie Pharmaceuticals.

%============================
\newpage

\bibliographystyle{plain}
\bibliography{RNA}

\newpage
\appendix
\centerline{\textbf{Supplementary Information}}

\section{Hill functions}
\label{app:Hill}

One of the most frequent models of gene regulation relies on \emph{Hill functions}~\cite{santillan2008use}. In the model, $x$ is used to denote the concentration of a transcription factor, using predefined units of concentration. The \emph{activation and repression Hill functions,} which are dimensionless, take values in $[0,1],$ and are rational functions of the form
\begin{align}
\Phi_{\text{act}}(x)=\frac{x^n}{\kappa^n+x^n}= \frac{1}{1+\left(\tfrac{\kappa}{x}\right)^n},\,\qquad
\, \Phi_{\text{rep}}(x)=\frac{\kappa^n}{\kappa^n+x^n}= \frac{1}{1+\left(\tfrac{x}{\kappa}\right)^n},
\end{align}
respectively. Here, $\kappa>0$ denotes what is known as the half-effective concentration which depends on the biological context but may be assumed to be constant for prespecified settings. The parameter $n>0$ is known as the Hill coefficient, and it controls the functional form of the Hill functions. For $n=1$, we have the so called hyperbolic (Michaelis–Menten) model, while for $n>1$ we have what is known as the positive cooperativity/ultrasensitive model. The case $0<n<1$ corresponds to the negative cooperativity/subsensitivity model. The Michaelis-Menten model is relevant in biological processes that are subject to saturation effects or systems in which a regulator can bind only to a small number of sites~\cite{srinivasan2022guide}. Our regulatory model also uses a Hill coefficient equal to $n=1,$ but decouples positive and negative influence factors by coupling them into the numerator and denominator, respectively, and enforcing them to be nonnegative. 

In gene-expression models, one typically uses $\Phi$ to describe the output rate of the transcriptional process via an affine transform
$$u(x)=u_{\min} + (u_{\max}-u_{\min})\, \Phi_{\text{act/rep}}(x),$$
where $u_{\min}$ and $u_{\max}$ control the dynamic range of molecular expression.
Deterministic models for regulated gene products $u$ integrate Hill functions into differential equations as follows:
\begin{align}
\frac{du}{dt} = \alpha\;\Phi(x) - \beta u,   
\end{align}
where $\alpha$ stands for the maximal transcription rate, $\beta$ for the  conversion/loss rate, and $\Phi(x)\in[0,1]$ is a Hill function or composition thereof. 
To facilitate a rigorous analysis of our model, we modify $\Phi(x)$ to involve only linear combinations of all regulatory units, and set $n=1$ (e.g., the Michaelis–Menten model). Although for gene regulation networks that have switch-like properties $n>1$ is preferred, the hyperbolic setting is more appropriate for capturing smooth agent responses. Furthermore, our production rates do no longer correspond to maxima/minima but rather basal values which allows all influence weights to be nonnegative (an assumption that facilitates analysis).

\section{Nonnegative dynamical systems}
\label{app:nonnegative}

In dynamical system models of biological systems, the unobserved and the observed state variables represent quantities like concentrations, abundances, etc.\ that take on nonnegative values. Hence, a key property of such systems is that, given a vector of nonnegative initial conditions at $t = 0$, all subsequent values of the state variables remain nonnegative for all $t \ge 0$. Such systems are referred to as \textit{nonnegative dynamical systems}. Here, we provide the necessary background on nonnegative dynamical systems, including structural characterization using the concept of \textit{essential nonnegativity}, as well as the definitions and criteria pertaining to equilibria and stability. A good reference is the book of Haddad et al.~\cite{haddad2010nonnegative}.

Consider a nonlinear system of the form
\begin{align}\label{eq:nonlinear_system}
\dot{x}(t) = f(x(t)),
\end{align}
where the state vector $x(t)$ takes values in $\Rset^n$. Let $\Rset^n_+$ denote the nonnegative orthant in $\Rset^n$, i.e., the set of all $x = [x_1, \dots, x_n]^\top \in \Rset^n$ such that $x_i \ge 0$ for all $i$. We say that the system \eqref{eq:nonlinear_system} is a nonnegative dynamical system if $\Rset^n_+$ is \textit{forward invariant} under the system dynamics, i.e., if 
\begin{align}
x(0) \in \Rset^n_+ \quad \Longrightarrow \quad x(t) \in \Rset^n_+, \, \forall t \ge 0.
\end{align}
\begin{Definition}Consider the system \eqref{eq:nonlinear_system}. The vector field $f$ is said to be \emph{essentially nonnegative} if, for any  $x \in \mathbb{R}_+^n$ and any index $i$ such that $x_i = 0$, it holds that $f_i(x) \geq 0$. 
\end{Definition}
The system \eqref{eq:nonlinear_system} is a nonnegative dynamical system if and only if the vector field $f$ is essentially nonnegative \cite[Proposition~2.1]{haddad2010nonnegative}.

Next, we discuss the notions of equilibria and stability for nonnegative dynamical systems. Suppose $x_e \in \Rset^n_+$ is an equilibrium of \eqref{eq:nonlinear_system}, i.e., $f(x_e) = 0$. We say that $x_e$ is \textit{stable in the sense of Lyapunov} with respect to $\Rset^n_+$ if, for all $\epsilon>0$, there exists a $\delta = \delta(\epsilon)>0$ such that $\vnorm{x(0) - x_e}_2 < \delta$ and $x(0) \in \Rset^n_+$ implies $\vnorm{x(t) - x_e}_2 < \epsilon$ and $x(t) \in \Rset^n_+$ for all $t > 0$. Furthermore, $x_e$ is \textit{asymptotically stable} with respect to $\Rset^n_+$ if it is stable in the sense of Lyapunov and if there exists a $\delta > 0$ such that $\vnorm{x(0) - x_e}_2 < \delta$ and $x(0) \in \Rset^n_+$ implies $x(t) \to x_e$ as $t \to \infty$. Finally, $x_e$ is \textit{globally asymptotically stable} with respect to $\Rset^n_+$ if it is stable in the sense of Lyapunov and if $x(t) \to x_e$ as $t \to \infty$ for any $x(0) \in \Rset^n_+$. In this latter case, $x_e$ is the unique equilibrium. The Lyapunov direct method \cite[Theorem~2.1]{haddad2010nonnegative} allows us to study the stability of the system without explicitly solving for the trajectories: Suppose there is a continuous differentiable function $V$ defined on an open set ${\cal D}$ containing $\Rset^n_+$, such that:
\begin{align}
\begin{split}
V(x_e) = 0, \qquad & \\
V(x) > 0, \qquad & x \in {\cal D}, \quad x \neq x_e \\
\dot{V}(x) := \frac{\partial V(x)}{\partial x}f(x) \leq 0, \qquad & x \in {\cal D}, \quad x \neq x_e.
\end{split}
\end{align}
Then the equilibrium point $x_e$ is Lyapunov stable w.r.t.\ $\Rset^n_+$. Suppose, further, that
\begin{align}\label{eq:Lyapunov_condition}
\dot{V}(x) < 0, \qquad x \in {\cal D}, \quad x \neq x_e.
\end{align}
Then $x_e$ is asymptotically stable w.r.t.\ $\Rset^n_+$. Finally, if \eqref{eq:Lyapunov_condition} holds and, in addition, $V(x) \to \infty$ whenever $\vnorm{x}_2 \to \infty$, then $x_e$ is globally asymptotically stable w.r.t.\ $\Rset^n_+$.

%#############################
\section{Omitted proofs}
%--------------------
\subsection{Proof of Theorem~\ref{thm.ss}}
At a steady state, we must have
\begin{align}
\begin{aligned}
\alpha R(s^*) - \beta u^* = 0,\quad
\beta u^* - \gamma s^*=0. 
\end{aligned}
\end{align}
Hence, because $\gamma \succ 0$, $s^*$ satisfies
\begin{align}
 s^* = \gamma^{-1} \alpha R(s^*).
\end{align}
That is, $s^*$ is a fixed point of $F(\cdot):=\gamma^{-1} \alpha R(\cdot)$. We next use Brouwer's fixed-point theorem to establish conditions for the existence of at least one fixed point $s^* \in \Rset^{n_g}_+$. 

By the definition of $R$, the function $F$ is continuous. Next, consider a closed box in $\Rset^{n_g}_+$ defined as 
$\Bcal:=\left\{  s \in \Rset^{n_g}: 0 \leq s \leq M\right\}$ for some $M \in \Rset^{n_g}$ where the inequality is coordinatewise and $0 \leq M_k<\infty$ for all $k = 1,\cdots, n_g$.
Note that $\Bcal$ is a compact and convex subset of $\Rset^{n_g}$. By Brouwer's fixed-point theorem, if $F(\Bcal) \subseteq \Bcal$, then $F$ has at least one fixed point $s^* \in \Bcal$. We therefore have to find conditions under which $F(\Bcal) \subseteq \Bcal$. To this end, notice that since $W^{+}$ and $W^{-}$ are nonnegative matrices,  $R_g(s)$ can be bounded as follows:
\begin{align}
0\leq R_g(s)
= \frac{\kappa+ [W^{+} s]_g}{\kappa+  [W^{-} s]_g}
\leq \frac{\kappa+ [W^{+} s]_g}{\kappa}
% \leq \frac{\kappa+ \left(\sum_{q=1}^{n_g} W^{+}_{gq}\right) \vnorm{s}_\infty}{\kappa}.
\end{align}
% where $\vnorm{s}_{\infty} = \max_{g} |s^g|$. 
Thus, we can bound the vectors $R(s)$ via $R(s) \leq \bm{1}+ \frac{1}{\kappa} W^{+} s$, and $F(s)$ as $F(s) \leq \gamma^{-1} \alpha \left( \bm{1}+ \frac{1}{\kappa}W^{+} s\right)$, where $\bm{1} := [1,\dots,1]^\top$ and the inequalities are coordinatewise.
Denote $\Lambda = \frac{1}{\kappa} \gamma^{-1} \alpha W^{+}$.
Since $\gamma,\alpha,W^{+}$ have nonnegative entries, the linear map $s \to \Lambda s + \gamma^{-1} \alpha  \bm{1}$ is monotone.
When $\rho\left( \Lambda \right) <1$, $(I-\Lambda)^{-1}$ exists, and we can define $M := (I-\Lambda)^{-1} \gamma^{-1} \alpha  \bm{1}$. Then, for $0\leq s \leq M $, we have
\begin{align}
F(s) \leq \Lambda s + \gamma^{-1} \alpha  \bm{1} 
\stackrel{(a)}{\leq}\Lambda M  + \gamma^{-1} \alpha  \bm{1} 
\stackrel{(b)}{\leq} \Lambda M  + (I-\Lambda) M
= M,
\end{align}
where (a) follows from the monotonicity of the linear map $\Lambda$, and (b) holds since by definition of $M$, we have
$\gamma^{-1} \alpha  \bm{1} = (I-\Lambda)M$.

We thus conclude that, when $\rho\left( \Lambda \right) <1$, $F$ has at least one fixed point $s^*$. In this case, we also have $u^* = \beta^{-1} \gamma s^*$.

%--------------------
\subsection{Proof of Lemma~\ref{lemma.stability0}}
When $W^{-}$ is a zero matrix, we have a linear system
\begin{align}
\begin{aligned}
\frac{d u}{dt} = \alpha \left(\kappa \bm{1} + W^{+} s \right) - \beta u,\quad
\frac{d s}{dt} = \beta u - \gamma s,
\end{aligned}
\label{eq.GRN.vec.linear}
\end{align}
where $\bm{1}$, as before, stands for the all-ones vector. Setting the right-hand side to $0$, we arrive at
\begin{align}
s^* = \kappa \left( \gamma - \alpha W^{+} \right)^{-1} \alpha \bm{1}, \quad u^* = \beta^{-1} \gamma s^*.
\end{align}

To examine the stability of $(u^*,s^*)$, notice that \eqref{eq.GRN.vec.linear} can be written as
\begin{align}
\begin{aligned}
\frac{d }{dt} \begin{bmatrix}
u\\
s
\end{bmatrix}  
= \underbrace{\begin{bmatrix} -\beta & \alpha W^{+} \\
\beta & -\gamma \end{bmatrix} }_{=: P}
\begin{bmatrix}u\\s\end{bmatrix} 
+\begin{bmatrix} \kappa \alpha \bm{1} \\0\end{bmatrix} .
\end{aligned}
\end{align}

For the linear system to be stable, $P$ must be a Hurwitz matrix. By the Gershgorin disk theorem~\cite{melman2018cauchy}, a sufficient condition to guarantee that all eigenvalues of $P$ have negative real parts (or, equivalently, all eigenvalues of $-P$ have positive real parts) requires that
\begin{align}
\beta^g > \sum_{h} \left|\left[\alpha W^{+} \right]_{gh}\right| , \quad \text{and} \quad 
|\gamma^g| > \sum_{h} \left|\beta_{gh}\right| ,\quad \forall g = 1,\cdots, n_g.
\end{align}
Since $W^{+}$ is nonnegative, and $\alpha$, $\beta$, $\gamma$ are diagonal matrices with positive entries, we have
\begin{align}
\beta^g > \alpha^g \sum_{h} W^{+}_{gh} , \quad \text{and} \quad
\gamma^g >\beta^g ,\quad \forall g = 1,\cdots, n_g.
\end{align}
This completes the proof. 

%--------------------
\subsection{Proof of Theorem~\ref{theorem.stablity}}
We first present a lemma that will be useful in subsequent proofs.
\begin{lemma}
If there exists a $\delta >0$ such that $W^{-}$ satisfies $\min_g [W^{-}s]_g \geq \delta \vnorm{s}_1$ for all $s \ge 0$, then $R(s)$ is globally Lipschitz on the nonnegative orthant, i.e., there exists $ 0 \leq \omega < \infty$ such that, for all $s,s' \ge 0$,
\begin{align}
\vnorm{R(s) - R(s')}_2 \leq \omega \vnorm{s - s'}_2.
\label{eq.Lipschitz}
\end{align}
\end{lemma}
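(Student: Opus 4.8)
The plan is to deduce the global Lipschitz estimate from a uniform bound on the operator norm of the Jacobian $J(s) := \partial R/\partial s$ over the nonnegative orthant. Since the hypothesis forces $D_g(s) = \kappa + [W^{-}s]_g \ge \kappa > 0$ everywhere on $\Rset^{n_g}_+$, the map $R$ is continuously differentiable there, and because $\Rset^{n_g}_+$ is convex, the mean value inequality gives $\vnorm{R(s)-R(s')}_2 \le \big(\sup_{\xi \ge 0}\vnorm{J(\xi)}_{\op}\big)\,\vnorm{s-s'}_2$. It therefore suffices to show that $\sup_{\xi \ge 0}\vnorm{J(\xi)}_{\op} =: \omega$ is finite and to identify it explicitly.

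First I would compute the Jacobian entries. With $N_g = \kappa + [W^{+}s]_g$ and $D_g = \kappa + [W^{-}s]_g$, the quotient rule reproduces exactly the incremental-gain expression already derived in~\eqref{eq.gain},
\[
J_{gq} = \frac{\partial R_g}{\partial s^q} = \frac{W^{+}_{gq} D_g - N_g W^{-}_{gq}}{D_g^2}.
\]
The key structural simplification is the disjoint-support constraint $W^{+}_{gq}W^{-}_{gq}=0$: for each pair $(g,q)$ at most one numerator term is nonzero, so I can bound $|J_{gq}|$ by treating the activator and repressor cases separately. Writing $t := \vnorm{s}_1$ and $c_1 := \max_{g,q}(W^{+}_{gq},W^{-}_{gq})$, nonnegativity of $s$ gives $N_g \le \kappa + c_1 t$, while the hypothesis yields $D_g \ge \kappa + \delta t$ (and $D_g \ge \kappa$). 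In the activator case ($W^{+}_{gq}>0$) this gives $|J_{gq}| = W^{+}_{gq}/D_g \le c_1/\kappa$; in the repressor case ($W^{-}_{gq}>0$) it gives $|J_{gq}| = N_g W^{-}_{gq}/D_g^2 \le h(t)$, where $h(t) := c_1(\kappa + c_1 t)/(\kappa+\delta t)^2$.

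The one genuine computation is maximizing the scalar function $h$ over $t \ge 0$: differentiating shows that its only interior critical point is $t^{\star} = \kappa(c_1-2\delta)/(\delta c_1)$, and substituting gives $h(t^{\star}) = c_1^3/\big(4\delta\kappa(c_1-\delta)\big)$, while $h(0)=c_1/\kappa$ and $h(t)\to 0$ as $t\to\infty$. Hence $\sup_{t\ge0} h(t) = \max\!\big(c_1/\kappa,\; c_1^3/(4\delta\kappa(c_1-\delta))\big)$, and every entry obeys the same bound. Controlling the operator norm by the Frobenius norm of the $n_g \times n_g$ matrix then produces the advertised constant $\omega = n_g\max\!\big(c_1/\kappa,\; c_1^3/(4\delta\kappa(c_1-\delta))\big)$, matching its use in Theorem~\ref{theorem.stablity}.

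The main obstacles are twofold. First, the whole argument rests on the denominator staying uniformly bounded away from zero so that $R$ is globally smooth; this is precisely what the repression hypothesis $\min_g[W^{-}s]_g \ge \delta\vnorm{s}_1$ buys, and is the reason the constant degenerates without it. Second, the scalar optimization of $h$ must be done carefully: I must verify that $t^{\star}$ is a maximizer and handle the regime $c_1 \le 2\delta$, where $t^{\star}\le 0$ and the supremum is instead attained at $t=0$. A useful sanity check is that testing the hypothesis on $s=e_q$ forces $W^{-}_{gq}\ge\delta$ for all $g,q$, so $c_1 \ge \delta$ and the stated $\omega$ is finite exactly when $c_1>\delta$.
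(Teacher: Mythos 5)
Your proposal is correct and follows essentially the same route as the paper: bound each Jacobian entry by the scalar function $\tfrac{\kappa c_1 + c_1^2 \vnorm{s}_1}{(\kappa+\delta\vnorm{s}_1)^2}$ using the disjoint-support property of $W^{\pm}$ and the hypothesis $[W^{-}s]_g \ge \delta\vnorm{s}_1$, maximize that function over $\vnorm{s}_1 \ge 0$ to get $\max\bigl(c_1/\kappa,\; c_1^3/(4\delta\kappa(c_1-\delta))\bigr)$, and pass from the entrywise bound to the operator norm via the Frobenius norm to obtain $\omega = n_g\max(\cdot,\cdot)$. The only cosmetic difference is that you split the activator and repressor cases before bounding, while the paper bounds the numerator uniformly; both yield the identical constant.
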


\begin{proof}
We first compute the Jacobian $J_R$ of $R$ w.r.t $s$. For $g,h = 1,\cdots, n_g$, 
\begin{align}
\begin{aligned}
[J_R]_{gh}&=\frac{\partial R_g(s)}{\partial s^h}\\
&= \frac{W_{gh}^{+} \left(\kappa+ \sum_{q=1}^{n_g} W_{gq}^{-} s^q\right) 
- W_{gh}^{-} \left(\kappa+ \sum_{q=1}^{n_g} W_{gq}^{+} s^q\right)}{\left(\kappa+ \sum_{q=1}^{n_g} W_{gq}^{-} s^q \right)^2} \\
&= \frac{ \kappa \left(  W_{gh}^{+}  -  W_{gh}^{-}\right)
+ \sum_{q=1}^{n_g} \left(W_{gq}^{-} W_{gh}^{+} 
- W_{gh}^{-}  W_{gq}^{+}\right) s^q}{\left(\kappa+ \sum_{q=1}^{n_g} W_{gq}^{-} s^q \right)^2}.
\end{aligned}
\label{eq.J.R}
\end{align}

Let $c_1:= \max_{g,h} | W_{gh}^{+}  -  W_{gh}^{-} |$, 
$c_2:= \max_{g,h,q} |W_{gq}^{-} W_{gh}^{+} 
- W_{gh}^{-}  W_{gq}^{+} |$.
Since $W_{gh}^{+}$ and $W_{gh}^{-}$ are nonnegative and cannot be simultaneously positive, we have $c_1:= \max_{g,h} ( W_{gh}^{+},  W_{gh}^{-} )$, and $c_2:= \max_{g,h,q} ( W_{gq}^{-} W_{gh}^{+} 
,W_{gh}^{-}  W_{gq}^{+}) \leq c_1^2$.
We then have
\begin{align}
\begin{aligned}
\left|\frac{\partial R_g(s)}{\partial s^h}\right|
\leq  \frac{ \kappa c_1 +c_2  \Big(\sum_{q=1}^{n_g} s^q \Big)}{\left(\kappa+ \sum_{q=1}^{n_g} W_{gq}^{-} s^q \right)^2}
\leq  \frac{ \kappa c_1 + c_1^2  \vnorm{s}_1}{\left(\kappa+ \delta \vnorm{s}_1 \right)^2}.
\end{aligned}
\end{align}
Note that, for $r \geq 0$, the maximum of the function $f(r) := \frac{ \kappa c_1 +c_1^2  r}{\left(\kappa+ \delta r \right)^2}$ is attained at $r^* = \frac{\kappa \left( c_1 - 2 \delta \right)}{\delta c_1}$ if $c_1 > 2 \delta $ and $r^* = 0$ if $c_1 \leq 2 \delta$, with $f(r^*) = \frac{c_1}{\kappa}$ if $c_1 < 2 \delta $ and $f(r^*) = \frac{c_1^3}{4 \delta \kappa (c_1 - \delta)}$ otherwise. Hence, the Lipschitz constant is upper-bounded by
\begin{align}
\vnorm{J_R}_2 \leq \vnorm{J_R}_F 
=  \sqrt{\sum_{g,h}^{n_g} \left|\frac{\partial R_g(s)}{\partial s^h}\right|^2 } 
\leq n_g \max\left(\frac{c_1}{\kappa}, \frac{c_1^3}{4 \delta \kappa (c_1 - \delta)}\right) = : \omega,
\end{align}
where the second inequality holds since the spectral norm of the Jacobian is upper-bounded by its Frobenius norm. 
\end{proof}
We are now ready to present the proof of Theorem~\ref{theorem.stablity}. 
By definition, $V$ is positive definite and radially unbounded, i.e., $V(u,s) \to + \infty$ as $\vnorm{u}_2 \to +\infty$ and $\vnorm{s}_2 \to +\infty$. We next compute $\dot{V}(u,s)$ as follows. From \eqref{eq.GRN}, we have
\begin{align}
\begin{aligned}
\dot{V}(u,s)=& \left( u - u^*\right)^\top \dot{u} 
+ \left(s - s^*\right)^\top \dot{s}\\
=&\left( u - u^*\right)^\top \left(\alpha R(s) - \beta u\right) 
+ \left(s - s^*\right)^\top \left(\beta u - \gamma s\right). 
\end{aligned}    
\label{eq.Lyapunov.1}
\end{align}
Recall that the steady-state $(u^*,s^*)$ satisfies
\begin{align}
\begin{aligned}
\alpha R(s^*) - \beta u^* = 0,\quad
\beta u^* - \gamma s^*=0. 
\end{aligned}
\end{align}
Plugging these expressions into Equation~\eqref{eq.Lyapunov.1}, we get
\begin{align}
\begin{aligned}
\dot{V}(u,s)=& \left( u - u^*\right)^\top \left(\alpha R(s) - \beta u 
- \alpha R(s^*) + \beta u^* \right) 
+ \left(s - s^*\right)^\top \left(\beta u - \gamma s -\beta u^* + \gamma s^*\right) \\
=&\left(u - u^*\right)^\top \left[\alpha \left(R(s) - R(s^*) \right) - \beta \left( u - u^* \right) \right]
+ \left(s - s^*\right)^\top \left[\beta \left(u - u^*\right) - \gamma \left(s - s^*\right) \right]\\
=& \left(u - u^*\right)^\top \alpha \left(R(s) - R(s^*) \right) 
- \left(u - u^*\right)^\top \beta \left( u - u^* \right) 
+ \left(s - s^*\right)^\top \beta \left(u - u^*\right) 
- \left(s - s^*\right)^\top\gamma \left(s - s^*\right)\\
=& 
\left(u - u^*\right)^\top \alpha \left(R(s) - R(s^*) \right)  
+\begin{bmatrix} u - u^* \\ s - s^* \end{bmatrix}^\top 
\begin{bmatrix} -\beta & \frac{1}{2} \beta \\ \frac{1}{2} \beta & -\gamma \end{bmatrix} 
\begin{bmatrix} u - u^* \\ s - s^* \end{bmatrix}.
\end{aligned}    
\label{eq.Lyapunov.2}
\end{align}

We first focus on the first term $\left(u - u^*\right)^\top \alpha \left(R(s) - R(s^*) \right)  \in \Rset$. Since $R(\cdot)$ is uniformly Lipschitz on $\Rset^{n_g}_+$, for all $u,s \in \Rset^{n_g}_+$ we have
\begin{align}
\begin{aligned}
\left(u - u^*\right)^\top \alpha \left(R(s) - R(s^*) \right) 
\leq& \left|\left(u - u^*\right)^\top \alpha \left(R(s) - R(s^*) \right) \right| \\
\stackrel{(a)}{\leq} & \epsilon \vnorm{u - u^*}_2^2 
+ \frac{1}{4 \epsilon} \vnorm{\alpha \left(R(s) - R(s^*) \right)}^2\\
\stackrel{(b)}{\leq} &  \epsilon \vnorm{u - u^*}_2^2
+ \frac{1}{4 \epsilon} \vnorm{\alpha}_\op^2 \vnorm{R(s) - R(s^*)}_2^2 \\
\stackrel{(c)}{\leq} &  \epsilon \vnorm{u - u^*}_2^2
+ \frac{\omega^2}{4 \epsilon} \vnorm{\alpha}_\op^2 \vnorm{s - s^*}_2^2.
\end{aligned}
\end{align}
In (a), we made use of Young's inequality for real numbers with $\epsilon_0>0$, which asserts that for $a,b \in \Rset$, $ab \leq \frac{a^2}{2 \epsilon_0} + \frac{\epsilon_0 b^2}{2}$. Setting $\epsilon = \frac{1}{2 \epsilon_0}$, $a = \vnorm{u - u^*}_2$ and $b = \vnorm{\alpha \left(R(s) - R(s^*) \right)}_2$ gives (a).
Line (b) follows from the submultiplicative property, and line (c) holds due to~\eqref{eq.Lipschitz}. 
Plugging this into \eqref{eq.Lyapunov.2} we get
\begin{align}
\begin{aligned}
\dot{V}(u,s)
\leq&  
\begin{bmatrix} u - u^* \\ s - s^* \end{bmatrix}^\top 
\begin{bmatrix} -\beta + \epsilon I & \frac{1}{2} \beta \\
\frac{1}{2} \beta & -\gamma +\frac{\omega^2 }{4 \epsilon} \vnorm{\alpha}^2_\op I
\end{bmatrix} 
\begin{bmatrix} u - u^* \\ s - s^* \end{bmatrix},
\end{aligned}    
\end{align}
where $I$ is the identity matrix of dimension $n_g \times n_g$. Now, define $Q \in \Rset^{2n_g \times 2 n_g}$ as
\begin{align}
Q:= \begin{bmatrix} \beta - \epsilon I & -\frac{1}{2} \beta \\
- \frac{1}{2} \beta & \gamma - \frac{\omega^2 }{4 \epsilon} \vnorm{\alpha}^2_\op I
\end{bmatrix}. 
\end{align}
For $\dot{V} \leq - \begin{bmatrix} u - u^* \\ s - s^* \end{bmatrix}^\top  Q \begin{bmatrix} u - u^* \\ s - s^* \end{bmatrix} < 0$ to hold, we need $Q$ to be positive definite. Since $Q$ is symmetric, $Q$ is positive definite if and only if  
$\beta - \epsilon I$, the top left sumbatrix in $Q$, and its Schur complement in the block-matrix $Q$ defined below are positive definite, i.e.,
\begin{align}
\gamma - \frac{\omega^2 }{4 \epsilon} \vnorm{\alpha}_\op^2 I
- \frac{1}{4} \beta \left( \beta - \epsilon I\right)^{-1} \beta \succ 0.
\label{eq.Q.schur}
\end{align}
As $\gamma,\alpha,\beta$ are diagonal matrices, the above condition can be written as
\begin{align}
\beta^g > \epsilon, \quad
\gamma^g > \frac{\omega^2 }{4 \epsilon} \vnorm{\alpha}_\op^2 
+ \frac{{\beta^g}^2}{4(\beta^g - \epsilon )},
\quad \forall g = 1,\cdots, n_g.
\end{align}
Choose $\epsilon = \frac{\omega \vnorm{\alpha}_\op}{2}$ so that $\epsilon = \frac{\omega^2 }{4 \epsilon} \vnorm{\alpha}_\op^2$. Then we have
\begin{align}
\beta^g > \frac{\omega \vnorm{\alpha}_\op}{2}, \quad
\gamma^g > \frac{\omega \vnorm{\alpha}_\op}{2} 
+ \frac{{\beta^g}^2}{4(\beta^g - \frac{\omega \vnorm{\alpha}_\op}{2} )},
\quad \forall g = 1,\cdots, n_g.
\end{align}
By the Lyapunov theorem for nonnegative dynamical systems (see Appendix, Section~\ref{app:nonnegative}), the equilibrium point $(u^*,s^*)$ is globally asymptotically stable (and hence unique).

%--------------------
\subsection{Proof of Lemma~\ref{lemma.essentially.nonnegative}} Suppose $u^g,s^g$ have nonnegative coordinates.
First, consider the case $u_i^g = 0$. In that case, the right-hand side of $\frac{d u_i^g}{dt}$ only depends on $s_i^q$. Since the entries of $W^{\pm}$ are nonnegative, we have
\begin{align}
\frac{d u_i^g}{dt} = \alpha_i^g \cdot \frac{\kappa + \sum_q W_{gq}^{+} s_i^q}{\kappa + \sum_q W_{gq}^{-} s_i^q} \geq 0. 
\end{align}

Next, let $s_i^g = 0$, and assume $u_i^g \geq 0$ and $s_j^g \geq 0$ for all $g$ and  $j \neq i$. Then we have
\begin{align}
\frac{d s_i^g}{dt} = \beta_i^g u_i^g + c \sum_j A_{ij} s_j^g \geq 0.
\end{align}
Therefore, each component of the vector field is nonnegative when the corresponding state component is zero, and all others are nonnegative. 

%--------------------
\subsection{Proof of Theorem~\ref{thm.ss-multi}}
At equilibrium, we have
\begin{align}
\begin{aligned}
& \alpha_i^g \ \frac{\kappa+ \sum_{q=1}^{n_g} W_{gq}^{+} s_i^q}{\kappa+ \sum_{q=1}^{n_g} W_{gq}^{-} s_i^q}= \beta_i^g u_i^g \\
& \beta_i^g u_i^g 
= \gamma_i^g s^g_i(t) - {c} \sum_{j=1}^{n_c} A_{ij}\left( s^g_j - s^g_i \right) .
\end{aligned}
\end{align}
Hence, $s^i_g$ satisfy the following equations for all $i=1,\cdots,n_c$ and $g =1,\cdots,n_g$, 
\begin{align}
\alpha_i^g \ \frac{\kappa+ \sum_{q=1}^{n_g} W_{gq}^{+} s_i^q}{\kappa+ \sum_{q=1}^{n_g} W_{gq}^{-} s_i^q}
= \gamma_i^g s^g_i 
- {c} \sum_{j=1}^{n_c} A_{ij}\left( s^g_j - s^g_i \right).
\end{align}
We reuse the notation $R_i^g(s_i) = \frac{\kappa+ \sum_{q=1}^{n_g} W_{gq}^{+} s_i^q}{\kappa+ \sum_{q=1}^{n_g} W_{gq}^{-} s_i^q}$ so that the above condition can then be rewritten as
\begin{align}
\frac{\alpha_i^g}{\gamma_i^g} \ R_i^g(s_i)+ \frac{c}{\gamma_i^g} \sum_{j=1}^{n_c} A_{ij}  s_j^g
=  s^g_i +\left(\frac{c}{\gamma_i^g} \sum_{j=1}^{n_c} A_{ij}\right) s_i^g .
\end{align}

Define the mapping $F(s)= [F_1(s)^\top,\cdots,F_{n_c}(s)^\top]^\top \in \Rset^{n_g \cdot n_c}$ where each block component $F_i(s) = [F_i^1,\cdots,F_i^{n_g}]^\top \in \Rset^{n_g}$ is defined elementwise as
\begin{align}
    F_i^g\left(s\right) = \frac{\alpha_i^g \ R_i^g(s_i)+ {c} \sum_{j=1}^{n_c} A_{ij}  s_j^g}{\gamma_i^g +\left({c} \sum_{j=1}^{n_c} A_{ij}\right)},
\end{align}
for $g = 1,\cdots,n_g$, $i = 1,\cdots,n_c$.
Then, $s$ should be a fixed point of $F$. In addition, by the definition of $R$ in~\eqref{eq.R}, $F$ is continuous. Now consider a box in $\Rset^{n_g \cdot n_c}_+$ defined as $\Bcal:=\left\{  s \in \Rset^{n_g \cdot n_c}: 0 \leq s \leq M\right\}$ for some $M \in \Rset^{n_g \cdot n_c}_+$, where the inequality holds for each component/dimension and $0 \leq M_k<\infty$ for all $k = 1,\cdots, n_g \cdot n_c$. Such a closed box $\Bcal$ is a compact and convex subset of $\Rset^{n_g \cdot n_c}$ . By Brouwer's fixed-point theorem, if $F(\Bcal) \subseteq \Bcal$, then a fixed point $s^* = F(s^*)$ exists. To this end, notice that, since $W^{+}$ and $W^{-}$ are nonnegative matrices,  $R_i^g(s)$ can be bounded as

\begin{align}
0\leq R_i^g(s)
= \frac{\kappa+ [W^{+} s_i]_g}{\kappa+  [W^{-} s_i]_g}
\leq \frac{\kappa+ [W^{+} s_i]_g}{\kappa}.
\end{align}
Since the adjacency matrix $A$ of the cellular network is nonnegative, we have 
\begin{align}
F_i^g \leq \frac{\alpha_i^g \ R_i^g(s_i)+ {c} \sum_{j=1}^{n_c} A_{ij}  s_j^g(t)}{\gamma_i^g}    
\leq& \frac{\alpha_i^g}{\gamma_i^g}
+ \underbrace{\frac{1}{\kappa}\frac{\alpha_i^g}{\gamma_i^g} [W^{+} s_i]_g}_{=: T_1}
+ \underbrace{\frac{c}{\gamma_i^g} \sum_{j=1}^{n_c} A_{ij}  s_j^g(t)}_{=: T_2}.
\end{align}

Let $b = \text{vec}\left(\frac{\alpha_i^g}{\gamma_i^g}\right) \in \Rset^{n_g \cdot n_c}$. For a fixed cell $i$, $T_1$ captures the GRN inside the cell, while $T_2$ captures the spatial coupling among cells. Note that $T_1$ and $T_2$ in the above equations are linear in $s$, and we can define a block matrix $\Lambda \in \Rset^{n_g \cdot n_c \times n_g \cdot n_c}$ as follows. The diagonal elements of $\Lambda$ are set as $\Lambda_{ii}:= \frac{1}{\kappa} \gamma_i^{-1} \alpha_i W^{+} \in \Rset^{n_g  \times n_g}$ for $i=1,\cdots,n_c$. The off-diagonal elements of $\Lambda$ are set to $\Lambda_{ij} = {c} A_{ij} \gamma_i^{-1} I_{n_g}$ for $i,j=1,\cdots,n_c$, $i \neq j$. Thus, the block matrix $\Lambda$ equals 
\begin{align}
\Lambda = \text{diag}\left( \frac{1}{\kappa} \gamma_1^{-1} \alpha_1 W^{+}, \cdots, \frac{1}{\kappa} \gamma_{n_c}^{-1} \alpha_{n_c} W^{+}\right)    
+ {c} \left(A \otimes I_{n_g} \right) \text{diag}\left(\gamma_1^{-1},\cdots,\gamma_{n_c}^{-1}  \right).
\end{align}
We can therefore bound $F(s)$ according to $F(s) \leq b + \Lambda s$. Since $\Lambda$ is nonnegative, the linear map is monotone. When $\rho(\Lambda)<1$, $\left(I - \Lambda \right)^{-1}$ exists and we can define $M: = \left(I - \Lambda \right)^{-1} b$. Then, for $0 \leq s \leq M$, we have 
\begin{align}
F(s) \leq b + \Lambda s
\leq b + \Lambda M
= \left(I - \Lambda\right) M + \Lambda M
= M.
\end{align}
Hence, a fixed point in the nonnegative orthant exists by Brouwer's fixed-point theorem. 

%--------------------
\subsection{Proof of Lemma~\ref{lemma.stability0-multi}}
When $W^{-}$ is the zero-matrix, we have a linear system
\begin{align}
\begin{aligned}
\frac{d u_i^g}{dt} &= \frac{1}{\kappa} \alpha_i^g  \left(\kappa+ \sum_{q=1}^{n_g} W_{gq}^{+} s_i^q(t)\right)- \beta_i^g u_i^g(t),\\
\frac{d s_i^g}{dt} &= \beta_i^g u_i^g(t) 
-\gamma_i^g s^g_i(t) 
+ {c} \sum_{j=1}^{n_c} A_{ij}\left( s^g_j(t) - s^g_i(t) \right).
\end{aligned}
\label{eq.multi.linear}
\end{align}

Let $u^g = [u_1^g,u_2^g,\cdots u_{n_c}^g]^\top$.
To study the stability of the linear system, notice that for each $g$, \eqref{eq.multi.linear} can be written as
\begin{align}
\begin{aligned}
\frac{d u^g}{dt} &= \alpha^g 
+ \frac{1}{\kappa} \alpha^g \sum_p^{n_g} W_{gp}^{+} s^p - \beta^g u^g,\\
\frac{d s^g}{dt} &= \beta^g u^g
-\gamma^g s^g - {c} L s^g.
\end{aligned}
\end{align}
Let $u= [{u^1}^\top,\cdots {u^{n_g}}^\top]^\top \in \Rset^{n_g \cdot n_c}$, $L=\text{diag}(A \times \bm{1}_{n_c}) - A$ be the graph Laplacian, and let $\alpha$,$\beta$,$\gamma$ be block diagonal matrices, i.e., $\alpha= \text{diag}\left(\alpha^1,\cdots, \alpha^{n_g}\right)$, $\beta= \text{diag}\left(\beta^1, \cdots, \beta^{n_g}\right)$, $\gamma= \text{diag}\left(\gamma^1, \cdots, \gamma^{n_g}\right)$. Then, we have
\begin{align}
\begin{aligned}
\frac{d }{dt} \begin{bmatrix} u\\ s \end{bmatrix}  
= \underbrace{\begin{bmatrix} -\beta & \frac{1}{\kappa} \alpha \left( W^{+} \otimes I_{n_c} \right)  \\
\beta & -\gamma  -  {c} \left(I_{n_g} \otimes L \right)  \end{bmatrix} }_{=: P}
\begin{bmatrix}u\\s\end{bmatrix} 
+\begin{bmatrix}  \alpha \bm{1} \\0\end{bmatrix} .
\end{aligned}
\end{align}

For the linear system to be stable, $P$ must be Hurwitz. By the Gershgorin disk theorem~\cite{melman2018cauchy}, a sufficient condition to guarantee that all eigenvalues of $P$ have negative real parts (or, equivalently, all eigenvalues of $-P$ have positive real parts) is
\begin{align}
\begin{aligned}
\beta_i^g > \frac{1}{\kappa} \sum_{h}^{n_g} \left| \alpha_i  W_{gh}^{+}\right| , \quad
\left|\gamma_i^g  +  {c}  L_{ii}\right| > \beta_i^g +  {c} \sum_{j \neq i}^{n_c} |L_{ij}|.
\end{aligned}
\end{align}
Since $W^{+}$ is nonnegative, and $\alpha$, $\beta$, $\gamma$ are block diagonal matrices with positive entries, we have
\begin{align}
\begin{aligned}
\beta_i^g >  \frac{1}{\kappa}  \sum_{h}^{n_g} \alpha_i^g W_{gh}^{+} , \quad
\gamma_i^g +  {c}  L_{ii} > \beta_i^g + {c} \sum_{j \neq i}^{n_c} |L_{ij}|.
\end{aligned}
\end{align}
Since $L_{ii} = \sum_{j \neq i}^{n_c} |L_{ij}|$ by definition, we also have 
\begin{align}
\begin{aligned}
\beta_i^g > \frac{\alpha_i^g}{\kappa}  \sum_{h}^{n_g} W^{+}_{gh}, \quad
\gamma_i^g  > \beta_i^g .
\end{aligned}
\end{align}

%--------------------
\subsection{Proof of Theorem~\ref{theorem.stablity-multi}}
We first present a lemma on the continuity of the nonlinear function $R_i^g(s_i)$.
\begin{lemma}
Let $\vnorm{s_i}_1= \sum_{q=1}^{n_g} s_i^q$.
If for each cell $i \in \{ 1,\cdots, n_c \}$ there exists a $\delta_i >0$ such that $W^{-}$  satisfies $\min_g [W^{-}s]_g \geq \delta_i \vnorm{s_i}_1$ for all $s_i$ with nonnegative coordinates, then $R_i^g(s_i)$ is globally Lipschitz on the nonnegative orthant, i.e., there exists an $\omega_i \geq 0$ such that, for all $s_i, s'_i$ with nonnegative coordinates,
\begin{align}
\vnorm{R_i^g(s_i) - R_i^g(s'_i)}_2 \leq \omega_i \vnorm{s_i - s'_i}_2.
\end{align}
\end{lemma}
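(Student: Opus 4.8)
The plan is to mirror, almost verbatim, the single-cell Lipschitz lemma proved inside the proof of Theorem~\ref{theorem.stablity}, adapting it to the scalar (per-gene) map $R_i^g$ and the cell-specific constant $\delta_i$. First I would fix a cell $i$ and a gene $g$ and compute the gradient of the scalar function $R_i^g(s_i) = (\kappa + [W^{+}s_i]_g)/(\kappa + [W^{-}s_i]_g)$ with respect to $s_i$. By the quotient rule, for each $h = 1,\dots,n_g$,
\begin{align*}
\frac{\partial R_i^g}{\partial s_i^h}
= \frac{\kappa\left(W_{gh}^{+} - W_{gh}^{-}\right) + \sum_{q=1}^{n_g}\left(W_{gq}^{-}W_{gh}^{+} - W_{gh}^{-}W_{gq}^{+}\right) s_i^q}{\left(\kappa + [W^{-}s_i]_g\right)^2},
\end{align*}
which is the exact analogue of Equation~\eqref{eq.J.R} with the cell index attached.

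Next I would bound each entry exactly as in the single-cell case. Setting $c_1 := \max_{g,h}\left(W_{gh}^{+}, W_{gh}^{-}\right)$ (legitimate because $W_{gh}^{+}W_{gh}^{-}=0$, so the two coefficients never cancel adversarially), and using $\max_{g,h,q}\left(W_{gq}^{-}W_{gh}^{+}, W_{gh}^{-}W_{gq}^{+}\right) \le c_1^2$ for the numerator together with the hypothesis $[W^{-}s_i]_g \ge \delta_i \vnorm{s_i}_1$ for the denominator, I obtain $\left|\partial R_i^g/\partial s_i^h\right| \le (\kappa c_1 + c_1^2 \vnorm{s_i}_1)/(\kappa + \delta_i \vnorm{s_i}_1)^2$. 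Treating the right-hand side as a one-variable function $f(r) = (\kappa c_1 + c_1^2 r)/(\kappa + \delta_i r)^2$ on $r \ge 0$ and maximizing, the supremum is $c_1/\kappa$ when $c_1 \le 2\delta_i$ and $c_1^3/(4\delta_i\kappa(c_1-\delta_i))$ when $c_1 > 2\delta_i$. The crucial bookkeeping difference from the single-cell proof is that $R_i^g$ is \emph{scalar}, so its gradient is a length-$n_g$ vector rather than an $n_g \times n_g$ Jacobian; summing the squared entry bound over $h$ alone yields
\begin{align*}
\vnorm{\nabla_{s_i} R_i^g}_2 = \sqrt{\sum_{h=1}^{n_g}\left(\frac{\partial R_i^g}{\partial s_i^h}\right)^2} \le \sqrt{n_g}\,\max\left(\frac{c_1}{\kappa}, \frac{c_1^3}{4\delta_i\kappa(c_1-\delta_i)}\right) =: \omega_i,
\end{align*}
which accounts for the $\sqrt{n_g}$ factor (as opposed to the $n_g$ factor in the vector-valued single-cell statement). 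A global uniform bound on the gradient over the convex nonnegative orthant then upgrades to the claimed global Lipschitz inequality via the mean value inequality.

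The calculation is entirely routine and contains no genuine obstacle; the optimization of $f(r)$ was already carried out for the single-cell lemma, and the denominator lower bound is supplied directly by the hypothesis on $\delta_i$. The only point that warrants care—and the one I would flag explicitly—is the scalar-versus-vector distinction that changes the dimensional prefactor from $n_g$ to $\sqrt{n_g}$, since the Frobenius-to-spectral-norm step of the single-cell proof is here replaced by the plain Euclidean norm of a single gradient.
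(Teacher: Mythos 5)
Your proposal is correct and follows essentially the same route as the paper's own proof: compute the gradient of the scalar map $R_i^g$ entrywise via the quotient rule, bound each entry by $(\kappa c_1 + c_1^2\vnorm{s_i}_1)/(\kappa+\delta_i\vnorm{s_i}_1)^2$ using the hypothesis on $\delta_i$, maximize the resulting one-variable function, and sum over $h$ to obtain the $\sqrt{n_g}$ prefactor before invoking the mean value inequality on the convex nonnegative orthant. Your explicit remark on the scalar-versus-vector distinction (Euclidean gradient norm giving $\sqrt{n_g}$ rather than the Frobenius bound giving $n_g$) matches exactly what distinguishes this lemma from its single-cell counterpart in the paper.
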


\begin{proof}
We first compute the Jacobian $J_i^g$ of $R_i^g$ w.r.t. $s_i$.  For $g,h = 1,\cdots, n_g$, 
\begin{align}
\begin{aligned}
[J_i^g]_{h}=&\frac{\partial R_i^g(s_i)}{\partial s_i^h}\\
=& \frac{W_{gh}^{+} \left(\kappa+ \sum_{q=1}^{n_g} W_{gq}^{-} s_i^q\right) 
- W_{gh}^{-} \left(\kappa+ \sum_{q=1}^{n_g} W_{gq}^{+} s_i^q\right)}{\left(\kappa+ \sum_{q=1}^{n_g} W_{gq}^{-} s_i^q \right)^2} \\
=& \frac{ \kappa \left(  W_{gh}^{+}  -  W_{gh}^{-}\right)
+ \sum_{q=1}^{n_g} \left(W_{gq}^{-} W_{gh}^{+} 
- W_{gh}^{-}  W_{gq}^{+}\right) s_i^q}{\left(\kappa+ \sum_{q=1}^{n_g} W_{gq}^{-} s_i^q \right)^2}.
\end{aligned}
\end{align}

Let $c_1:= \max_{g,h} \left( W_{gh}^{+},  W_{gh}^{-} \right)$. We then have
\begin{align}
\begin{aligned}
\frac{\partial R_i^g(s_i)}{\partial s_i^h}
\leq  \frac{ \kappa c_1 + c_1^2  \vnorm{s_i}_1}{\left(\kappa+ \delta_i \vnorm{s_i}_1 \right)^2}.
\end{aligned}
\end{align}
Hence, the Lipschitz constant is upper-bounded by
\begin{align}
\vnorm{J_i^g}_2 \leq 
\sqrt{\sum_{h}^{n_g} \left|\frac{\partial R_i(s_i)}{\partial s_i^h}\right|^2 } 
= \sqrt{n_g} \max\left(\frac{c_1}{\kappa}, \frac{c_1^3}{4 \delta_i \kappa (c_1 - \delta_i)}\right):=\omega_i.
\end{align}
This completes the proof. \end{proof}

By construction, $V$ is positive definite and radially unbounded. We next show that $\dot{V}(u,s) < 0$. From~\eqref{eq.multinetwork.dynamics}, we have
\begin{align}
\begin{aligned}
\dot{V}(u,s)=& \sum_{i=1}^{n_c} \sum_{g= 1}^{n_g} \left( \left( u_i^g - {u_i^g}^* \right) \dot{u}_i^g + \left( s_i^g - {s_i^g}^*\right) \dot{s}_i^g \right)\\
%--
=& \sum_{i=1}^{n_c} \sum_{g= 1}^{n_g} \left( u_i^g - {u_i^g}^* \right) \left(\alpha_i^g \ \frac{\kappa+ \sum_{q=1}^{n_g} W_{gq}^{+} s_i^q}{\kappa+ \sum_{q=1}^{n_g} W_{gq}^{-} s_i^q}- \beta_i^g u_i^g(t)\right) \\
&+ \sum_{i=1}^{n_c} \sum_{g= 1}^{n_g} \left(s_i^g - {s_i^g}^*\right) \left(\beta_i^g u_i^g 
-\gamma_i^g s^g_i + {c} \sum_{j=1}^{n_c} A_{ij}\left( s^g_j - s^g_i \right) \right). 
\end{aligned}    
\label{eq.Lyapunov.multi.1}
\end{align}
Recall that the steady state $({u_i^g}^*,{s_i^g}^*)$ satisfies
\begin{align}
\begin{aligned}
\alpha_i^g \ \frac{\kappa+ \sum_{q=1}^{n_g} W_{gq}^{+} {s_i^q}^*}{\kappa+ \sum_{q=1}^{n_g} W_{gq}^{-} {s_i^q}^*}- \beta_i^g {u_i^g}^* = 0, \quad
\beta_i^g {u_i^g}^* 
-\gamma_i^g {s^g_i}^* + {c} \sum_{j=1}^{n_c} A_{ij}\left( {s^g_j}^* - {s^g_i}^* \right).
\end{aligned}
\end{align}
Plugging the previous equalities into~\eqref{eq.Lyapunov.multi.1}, we get

\begin{align}
\begin{aligned}
\dot{V}(u,s)=& \sum_{i=1}^{n_c} \sum_{g= 1}^{n_g} \left( u_i^g - {u_i^g}^* \right) 
\left[\alpha_i^g \ \frac{\kappa+ \sum_{q=1}^{n_g} W_{gq}^{+} s_i^q}{\kappa+ \sum_{q=1}^{n_g} W_{gq}^{-} s_i^q}- \beta_i^g u_i^g - \left( \alpha_i^g \ \frac{\kappa+ \sum_{q=1}^{n_g} W_{gq}^{+} {s_i^q}^*}{\kappa+ \sum_{q=1}^{n_g} W_{gq}^{-} {s_i^q}^*}- \beta_i^g {u_i^g}^*\right)  \right] \\
+& \sum_{i=1}^{n_c} \sum_{g= 1}^{n_g} \left(s_i^g - {s_i^g}^*\right)
\left[\beta_i^g u_i^g -\gamma_i^g \left(s^g_i - {c} \sum_{j=1}^{n_c} A_{ij}\left( s^g_j - s^g_i \right) \right)
-\left(\beta_i^g {u_i^g}^* 
-\gamma_i^g {s^g_i}^* 
+ {c} \sum_{j=1}^{n_c} A_{ij}\left( {s^g_j}^* - {s^g_i}^* \right) \right)\right]\\
%--
=& \sum_{i=1}^{n_c} \sum_{g= 1}^{n_g} \left( u_i^g - {u_i^g}^* \right) 
\left[\alpha_i^g \left(R_i^g (s_i) - R_i^g (s_i^*) \right) - \beta_i^g \left( u_i^g -  {u_i^g}^*\right)  \right] \\
+& \sum_{i=1}^{n_c} \sum_{g= 1}^{n_g} \left(s_i^g - {s_i^g}^*\right)
\left[\beta_i^g \left( u_i^g -{u_i^g}^*\right) -\gamma_i^g \left(s^g_i - {s^g_i}^*\right)
+ {c} \sum_{j=1}^{n_c} A_{ij}\left( \left( s^g_j - {s^g_j}^*\right) -\left(s^g_i - {s^g_i}^*\right) \right) \right]\\
%--
=& \sum_{i=1}^{n_c} \sum_{g= 1}^{n_g} 
\left[\alpha_i^g \left(R_i^g (s_i) - R_i^g (s_i^*) \right) \left( u_i^g - {u_i^g}^* \right)  - \beta_i^g \left( u_i^g -  {u_i^g}^*\right)^2  \right] \\
+& \sum_{i=1}^{n_c} \sum_{g= 1}^{n_g} 
\left[\beta_i^g \left( u_i^g -{u_i^g}^*\right) \left(s_i^g - {s_i^g}^*\right) -\gamma_i^g \left(s^g_i - {s^g_i}^*\right)^2\right]\\
+& \sum_{i=1}^{n_c} \sum_{g= 1}^{n_g} 
\left[ {c} \sum_{j=1}^{n_c} A_{ij}\left( \left( s^g_j - {s^g_j}^*\right) -\left(s^g_i - {s^g_i}^*\right) \right)\left(s_i^g - {s_i^g}^*\right) \right].
\end{aligned}    
\end{align}

We first examine the last term that arises due to the ``diffusive-like consensus'' among cells. Let $\Delta u_i^g = u^g_i - {u^g_i}^*$ and $\Delta s_i^g = s^g_i - {s^g_i}^*$. Since $A_{ij} = A_{ji}$,  we have
\begin{align}
\begin{aligned}
&{c} \sum_{i=1}^{n_c} \sum_{g= 1}^{n_g} 
\left[ \sum_{j=1}^{n_c} A_{ij}\left( \Delta s_j^g - \Delta s_i^g \right) \Delta s_i^g \right]   \\ 
&= {c}  \sum_{g= 1}^{n_g} 
\sum_{i=1}^{n_c} \sum_{j=1}^{n_c}\left[ \frac{1}{2} A_{ij}\left( \Delta s_j^g - \Delta s_i^g \right) \Delta s_i^g + \frac{1}{2} A_{ji}\left( \Delta s_i^g - \Delta s_j^g \right) \Delta s_i^g \right] \\
&= {c}  \sum_{g= 1}^{n_g} 
\sum_{i=1}^{n_c} \sum_{j=1}^{n_c}
\left[ -\frac{1}{2} A_{ij} \left( \Delta s_j^g - \Delta s_i^g \right)^2 \right] \\
&\leq  0.
\end{aligned}
\end{align}
For cell $i$, let $\Delta u_i = [\Delta u_i^1, \cdots, \Delta u_i^{n_g}]^\top$, and $\Delta s_i = [\Delta s_i^1, \cdots, \Delta s_i^{n_g}]^\top$. Then, we have
\begin{align}
\begin{aligned}
\dot{V}
\leq & 
\sum_{i=1}^{n_c} \sum_{g=1}^{n_g} 
\left[\alpha_i^g \left(R_i (s_i) - R_i (s_i^*) \right) \Delta u_i^g   - \beta_i^g \left(\Delta u_i^g\right)^2  \right] 
+ \sum_{i=1}^{n_c} \sum_{g= 1}^{n_g} 
\left[\beta_i^g \Delta u_i^g  \Delta s_i^g 
-\gamma_i^g \left(\Delta s_i^g\right)^2\right]\\
%--
=& \sum_{i=1}^{n_c} \sum_{g=1}^{n_g} 
\alpha_i^g \left(R_i (s_i) - R_i (s_i^*) \right) \Delta u_i^g
+ \sum_{i=1}^{n_c}
\begin{bmatrix} \Delta u_i \\ \Delta s_i \end{bmatrix}^\top 
\begin{bmatrix} -\beta_i & \frac{1}{2} \beta_i \\ \frac{1}{2} \beta_i & -\gamma_i \end{bmatrix} 
\begin{bmatrix} \Delta u_i \\ \Delta s_i \end{bmatrix}.
\end{aligned}    
\label{eq.Lyapunov.multi.2}
\end{align}
We next focus on the first term $\sum_{i=1}^{n_c} \sum_{g=1}^{n_g} 
\alpha_i^g \left(R_i^g (s_i) - R_i^g (s_i^*) \right) \Delta u_i^g  \in \Rset$. Since $R_i^g(\cdot)$ is uniformly Lipschitz on the nonnegative orthant with coefficients $\omega_i$, we have
\begin{align}
\begin{aligned}
\sum_{i=1}^{n_c} \sum_{g=1}^{n_g} 
\alpha_i^g \left(R_i^g (s_i) - R_i^g (s_i^*) \right) \Delta u_i^g 
\leq& \sum_{i=1}^{n_c} \sum_{g=1}^{n_g} \left|\alpha_i^g \left(R_i^g (s_i) - R_i^g (s_i^*) \right) \Delta u_i^g \right| \\
\leq & \sum_{i=1}^{n_c} \sum_{g=1}^{n_g} \alpha_i^g \left| R_i^g (s_i) - R_i^g (s_i^*) \right| \left| \Delta u_i^g \right|\\
\stackrel{(a)}{\leq} & \sum_{i=1}^{n_c} \sum_{g=1}^{n_g} \alpha_i^g  \omega_i \vnorm{\Delta s_i}_2 \left| \Delta u_i^g \right|\\
\stackrel{(b)}{\leq} & \sum_{i=1}^{n_c} \sum_{g=1}^{n_g} \left(\frac{1}{4 \epsilon_i} {\alpha_i^g}^2 {\omega_i^g}^2 \vnorm{\Delta s_i}^2 +  \epsilon_i \left| \Delta u_i^g \right|^2 \right)\\
=& \sum_{i=1}^{n_c} \frac{1}{4 \epsilon_i}{\omega_i}^2
\left(\sum_{g=1}^{n_g} {\alpha_i^g}^2 \right) \vnorm{\Delta s_i}^2
+ \sum_{i=1}^{n_c} \epsilon_i \sum_{g=1}^{n_g} \left| \Delta u_i^g \right|^2 \\
\stackrel{(c)}{=}&  \sum_{i=1}^{n_c} \frac{1}{4 \epsilon_i}{\omega_i}^2 \vnorm{\alpha_i}_F^2
\vnorm{\Delta s_i}^2
+ \sum_{i=1}^{n_c} \epsilon_i \vnorm{\Delta u_i}_2^2 \\
=& \sum_{i=1}^{n_c} \begin{bmatrix} \Delta u_i \\ \Delta s_i \end{bmatrix}^\top 
\begin{bmatrix} \epsilon_i I_{n_g} & 0 \\ 0 & \frac{\omega_i^2}{4 \epsilon_i} \vnorm{\alpha_i}_F^2 I_{n_g} \end{bmatrix} 
\begin{bmatrix} \Delta u_i \\ \Delta s_i \end{bmatrix},
\end{aligned}
\end{align}
where (a) follows from the Cauchy-Schwarz inequality, and (b) follows from Young's inequality with arbitrary $\epsilon>0$, and (c) holds since $\sum_{g=1}^{n_g} {\alpha_i^g}^2 = \vnorm{\alpha_i}_F^2$.
Plugging the last expression into~\eqref{eq.Lyapunov.multi.2}, we get
\begin{align}
\begin{aligned}
\dot{V}(u,s)
\leq&  
\sum_{i=1}^{n_c} \begin{bmatrix} \Delta u_i \\ \Delta s_i \end{bmatrix}^\top 
\begin{bmatrix} -\beta_i + \epsilon_i I_{n_g} & \frac{1}{2} \beta_i \\
\frac{1}{2} \beta_i & -\gamma_i +\frac{\omega_i^2}{4 \epsilon_i} \vnorm{\alpha_i}_F^2 I_{n_g} 
\end{bmatrix} 
\begin{bmatrix} \Delta u_i \\ \Delta s_i \end{bmatrix}.
\end{aligned}    
\end{align}
For each cell $i$, let $Q_i \in \Rset^{2n_g \times 2 n_g}$ be defined as
\begin{align}
Q_i:= \begin{bmatrix} \beta_i - \epsilon_i I_{n_g} & -\frac{1}{2} \beta_i \\
- \frac{1}{2} \beta_i & \gamma_i - \frac{\omega_i^2}{4 \epsilon_i} \vnorm{\alpha_i}_F^2 I_{n_g}
\end{bmatrix}. 
\end{align}
If $Q_i$ is positive definite for all cells $i$, then $\dot{V} \leq - \sum_i^{n_c} \begin{bmatrix} \Delta u_i \\ \Delta s_i \end{bmatrix}^\top  Q_i \begin{bmatrix} \Delta u_i \\ \Delta s_i \end{bmatrix} < 0$ whenever $\begin{bmatrix}\Delta u_i \\ \Delta s_i \end{bmatrix} \neq 0$. Since $Q_i$ is symmetric, it is positive definite if and only if $\beta_i - \epsilon_i I_{n_g} \succ 0$, and its Schur complement in $Q_i$ satisfies
\begin{align}
\gamma_i - \frac{\omega_i^2}{4 \epsilon} \vnorm{\alpha_i}_F^2 I_{n_g}
- \frac{1}{4} \beta_i \left(\beta_i - \epsilon I_{n_g} \right)^{-1} \beta_i \succ 0.
\end{align}

Choose $\epsilon_i = \frac{\omega_i \vnorm{\alpha_i}_F }{2}$. Since $\gamma_i,\beta_i$ are diagonal matrices, the condition becomes
\begin{align}
\beta_i^g > \frac{\omega_i \vnorm{\alpha_i}_F }{2}, \quad
\gamma_i^g >  \frac{\omega_i \vnorm{\alpha_i}_F }{2}
+ \frac{({\beta_i^g})^2}{4(\beta_i^g - \frac{\omega_i \vnorm{\alpha_i}_F }{2})}.
\end{align}
This completes the proof.

%--------------------
\subsection{Proof of Theorem~\ref{thm.consensus}}
From the definition of $\tilde{s}^g$, we have
\begin{align}
\begin{aligned}
\frac{d \tilde{s}^g}{dt}   
&= \frac{d s^g}{dt} - \frac{d \overline{s}^g}{dt} \\
&= B^g u^g(t) -\Gamma^g s^g(t) - {c} L s^g(t)
- P_c \left( B^g u^g(t) -\Gamma^g s^g(t) \right) \\
&= \left(I_{n_c} - P_c\right) \left( B^g u^g(t) -\Gamma^g s^g(t) \right)
- {c} L s^g(t).
\end{aligned}
\end{align}
Note that since $L \bm{1} = 0$, we have
\begin{align}
\begin{aligned}
L \tilde{s}^g
= L \left( s^g - \overline{s}^g \right) 
= L \left( s^g -    \frac{1}{n_c} \bm{1} \bm{1}^\top s^g \right) 
= L s^g.
\end{aligned}
\end{align}
Hence, the dynamics of the deviation is of the form
\begin{align}
\frac{d \tilde{s}^g}{dt}=z^g(t) + {c} L \tilde{s}^g(t),
\end{align}
where $z^g(t):= \left(I_{n_c} - P_c\right) \left( B^g u^g(t) -\Gamma^g s^g(t) \right)$. By definition of $P_c$, we have $P_c^2= P_c$ and hence $(I-P_c)^2 = I-P_c$. Thus, 
\begin{align}
\begin{aligned}
\vnorm{z^g(t)}_2^2
=& \vnorm{\left(I_{n_c} - P_c\right) \left( B^g u^g(t) -\Gamma^g s^g(t) \right)}_2^2 \\
\leq& \vnorm{B^g u^g(t) -\Gamma^g s^g(t)}_2^2.
\end{aligned}
\end{align}
Since $u^g(t)$,$s^g(t)$ are uniformly bounded for all $t \ge 0$ for each gene $g$, the norm of $z^g(t)$ is bounded by $Z^g_m$.

Now, consider the time derivative of $\Psi(t) = \frac{1}{2}\left\| \tilde{s}^g(t)\right\|_2^2$:
\begin{align}
\begin{aligned}
\frac{d \Psi}{dt}  
&= (\tilde{s}^g)^\top \left( z^g(t) - {c} L \tilde{s}^g(t)\right)\\
&\stackrel{(a)}{\leq} \vnorm{\tilde{s}^g}_2 \vnorm{z^g(t)}_2
-{c}(\tilde{s}^g)^\top L \tilde{s}^g(t)\\
&\stackrel{(b)}{\leq} \vnorm{\tilde{s}^g}_2 \vnorm{z^g(t)}_2
-{c \lambda_2(L)} \vnorm{\tilde{s}^g}_2^2\\
&=\left(2\Psi(t)\right)^{1/2} \vnorm{z^g(t)}_2
-{2 c \lambda_2(L)} \Psi(t) \\
&\stackrel{(c)}{\leq} 2 \epsilon \Psi(t) + \frac{1}{4\epsilon}\vnorm{z^g(t)}_2
-{2 c \lambda_2(L)} \Psi(t).
\end{aligned}
\end{align}
where (a) follows from the Cauchy-Schwarz inequality, (b) holds since $\tilde{s}^g$ is orthogonal to the one-dimensional subspace spanned by $\bm{1}$ and hence $\lambda_2(L) \vnorm{\tilde{s}^g(t)}_2^2 \leq (\tilde{s}^g)^\top L \tilde{s}^g(t)$, and (c) follows from Young's inequality. 
Setting $\epsilon = \frac{c \lambda_2(L)}{2} $, we have
\begin{align}
\begin{aligned}
\frac{d \Psi}{dt}  
&\leq {c \lambda_2(L) } \Psi(t) + \frac{1}{2c \lambda_2(L)}Z_m^g
- {2c \lambda_2(L)} \Psi(t) \\
&= - {c \lambda_2(L)}\Psi(t) + \frac{1}{ c \lambda_2(L)} Z_m^g.
\end{aligned}
\end{align}
Hence,
\begin{align}
\begin{aligned}
\Psi(t) 
& \leq  \Psi(0) e^{- {c \lambda_2(L)} t}
+ \frac{1}{c \lambda_2(L)}Z_m^g 
\int_0^t e^{- {c \lambda_2(L)} (t-\tau)}  d \tau \\
&\leq  \Psi(0) e^{- {c \lambda_2(L)} t}
+ \frac{1}{2c \lambda_2(L)}Z_m^g 
\left(1 - e^{- {c \lambda_2(L)}t} \right) \\
&\leq  \Psi(0) e^{- {c \lambda_2(L)}t}
+ \frac{1}{2c \lambda_2(L)}Z_m^g. 
\end{aligned}
\end{align}

Letting $t \to \infty$, and plugging $\Psi(t) = \frac{1}{2}\left\| \tilde{s}^g(t)\right\|_2^2$ back into the formula, we get
\begin{align}
\limsup_{t \to \infty} \vnorm{\tilde{s}^g}_2^2 \leq \frac{1}{c \lambda_2(L)}Z_m^g.
\end{align}
This completes the proof.

\end{document}